\newtheorem{theorem}{Theorem}[section]
\newtheorem{lemma}[theorem]{Lemma}
\newtheorem{corollary}[theorem]{Corollary}
\newtheorem{prop}[theorem]{Proposition} 
\numberwithin{equation}{section} 
\DeclarePairedDelimiter\abs{\lvert}{\rvert}
\DeclarePairedDelimiterX{\inn}[2]{\langle}{\rangle}{#1, #2}
\DeclarePairedDelimiterX{\norm}[1]{\lVert}{\rVert}{#1}
\DeclarePairedDelimiterX{\Bnorm}[1]{\Big\lVert}{\Big\rVert}{#1}
\newcommand*{\one}{\mathds{1}}
\newcommand{\RR}{{\mathbb{R}}}
\newcommand{\CC}{{\mathbb{C}}}
\newcommand{\TT}{{\mathbb{T}}}
\newcommand{\NN}{{\mathbb{N}}} 
\newcommand{\EE}{{\mathbb{E}}}
\newcommand{\nn}{\tilde{n}}
\newcommand{\T}[1]{\text{#1}}
\DeclareMathOperator{\arccosh}{arccosh}
\let\epsilon\varepsilon 
\let\pphi\phi
\let\phi\varphi 
\let\varphi\pphi 
\begin{document}
\title{On the regularity conditions in the CLT for the LUE}
\author{Henry Hu}

\address{University of Toronto, Canada}
\email{henryw.hu@mail.utoronto.ca}
\begin{abstract} 
   We consider the Laguerre Unitary Ensemble (LUE), the set of $n\times n$ sample covariance matrices $M = \frac{1}{n}X^*X$ where the $m\times n$ ($n \le m$) matrix $X$ has i.i.d. standard complex Gaussian entries. In particular we are concerned with the case where $\alpha := m - n$ is fixed, in which case the limiting eigenvalue density has a hard edge at $0$. We study the minimal regularity conditions required for a central limit theorem (CLT) type result to hold for the linear spectral statistics of the LUE. As long as the expression for the limiting variance is finite (and a slightly stonger condition holds near the soft edge) we show that the variance of the linear spectral statistic converges and consequently the CLT holds. Our methods are based on analyzing the explicit kernel for the LUE using asymptotics of the Laguerre polynomials. The CLT follows from approximating the test function of the statistic by Chebyshev polynomials.
\end{abstract}
 
\maketitle

\section{Introduction and main results}
Given a random matrix ensemble $(M_n)_{n\in \NN}$, where $M_n$ are square matrices of size $n$, we can study the asymptotic behavior of its eigenvalues $\{\lambda_k\}_{k\le n}$. In particular we are interested in the fluctuations of the linear spectral statistics (LSS) associated to $f:\CC \to \CC$,
  \begin{equation*}
   \mathcal{N}_n[f] := \sum_{j=1}^n f(\lambda_j).
  \end{equation*} 
When the ensemble consists of matrices with only real eigenvalues, we will restrict ourselves to the case of $f: \RR \to \RR$. The associated centred linear spectral statistics is given by,
  \begin{equation} \label{defs:CenteredStat}
  \mathcal{N}_n^\circ[f] := \mathcal{N}_n[f] - \EE[\mathcal{N}_n[f]].
  \end{equation}
These statistics have been extensively studied and exhibit many amazing mathematical properties. They have also shown up in a variety of other fields, with applications in physics, statistics, big data, wireless communications, etc. (see surveys \cite{Edelman2013, Priya2022} and references therein). 

The first notable instance of LSS in the literature dates back to the work of physicist Eugene Wigner in the 1950s \cite{Wigner1955}. His motivation stemmed from observations made about experimental data in nuclear physics; he postulated that the gaps in the spectrum of large nuclei should correspond to the spacings between the eigenvalues of random matrices. A well-known result of his is a type of law of large numbers for the density of the eigenvalues -- the Wigner semicircle law. Wigner matrices are the $n\times n$ complex Hermitian matrices with i.i.d entries of variance $\EE[\abs{h_{jk}}^2] = 1/n$ above the diagonal and i.i.d entries of variance $\EE[h_{jj}^2] = 1/n$ on the diagonal. In particular, the Gaussian Unitary Ensemble (GUE) is the complex Hermitian Wigner ensemble with entries that are i.i.d centred complex Gaussian above the diagonal and i.i.d centred real Gaussian on the diagonal. Then the Wigner semicircle law states for Wigner matrices and smooth $f: \RR\to \RR$,
  \begin{equation*}
   \lim_{n\to\infty} \frac{1}{n} \mathcal{N}_n[f] = \int_\RR f(x)\rho_{sc}(x) dx
  \end{equation*}
almost surely, where 
  \begin{equation*}
    \rho_{sc}(x) := \frac{1}{2\pi}\sqrt{\max(4-x^2, 0)}.
  \end{equation*}
We note that for the Wigner matrices there are two soft edges, at $x = \pm 2$, in the eigenvalue density, as the density tends to $0$. 

A closely related type of random matrices are the random sample covariance matrices, first studied by Wishart in the 1920s \cite{Wishart1928} as a tool in statistics to estimate large covariance matrices. Random sample covariance matrices are square matrices of the form
  \begin{equation} \label{eq:coVarDefs}
   M_n = \frac{1}{n}X^*X,
  \end{equation}
where $X$ is a random matrix of size $m\times n$, $n \le m$. The entries of $X$ satisfy the variance condition $\EE[\abs{h_{jk}}^2] = 1$. An important example is the Laguerre Unitary Ensemble (LUE), where $X$ has entries which are i.i.d. standard complex Gaussian. Notably, the random sample covariance matrices also obey a law of large numbers -- the famous Marchenko-Pastur law \cite{Marchenko1967}. It states that when the large-$n$ limit of the ratio between $n$ and $m$ exists, i.e.
  \begin{equation*}
    d := \lim_{n\to\infty} \frac{n}{m}
  \end{equation*}
then for smooth $f:\RR\to \RR$: 
  \begin{equation*}
    \lim_{n\to\infty}\frac{1}{n}\mathcal{N}_n[f] = \int_\RR f(x) \rho_{MP}dx
  \end{equation*}
almost surely, where
  \begin{equation*}
    \rho_{MP}(x) := \frac{1}{2\pi d} \frac{\sqrt{\max((1+\sqrt{d} - x)(x-1+\sqrt{d}), 0)}}{x}.
  \end{equation*}
In particular, when $d = 1$ the eigenvalue density has a hard edge at $x = 0$, i.e. goes to $\infty$. We call the support of the limiting eigenvalue density the bulk. In the case of $d =1$ this corresponds to the interval $[0, 4]$. 

Interest developed in the fluctuations of the LSS when Arkharov \cite{Arkharov1971} discovered in the 1970s that the centered LSS (of sample covariance matrices) converges to a Gaussian. The remarkable phenomena is that unlike i.i.d random variables, the LSS does not need a $n^{-\frac{1}{2}}$ scaling for a CLT-type result to hold. This is due to the strongly-correlated nature of the eigenvalues. It is unlikely for two neighboring eigenvalues to be very far or very close to each other, relative to the expected gap. This property is described as the rigidity of the spectrum. Results concerning the rigidity of the spectrum of the Wigner and sample covariance matrices can be found in \cite{Erdos2012, Pillai2014}. Arkharov's discovery was then proven rigorously by Jonsson in the 1980s \cite{Jonsson1982} and further improved by Girko \cite{Girko2012}; Girko was also the first to give explicit formulas for the limiting variance, albeit in more complex form than more current results. Since then there have been a plethora of results proving the CLT for various classes of functions and random matrices (i.e. see \cite{Diaconis2001, Brian2007} and other references below). 

It is of interest to understand the minimal regularity conditions on $f$ for $\mathcal{N}_n^\circ[f]$ to converge to a Gaussian (see e.g. \cite{Johansson1998, Landon2022, Lytova2009, Shcherbina2011, Sosoe2013} and the references therein). Among the earliest of this kind of result is Johansson \cite{Johansson1998} who proved a CLT for unitarily invariant matrices, including the GUE, with $f \in H^{2+\epsilon}$. Johansson has further conjectured that the sole condition for the CLT to hold should be that the limiting variance if finite. While showing this conjecture in full generality for general matrix ensembles is out of reach, there have been results which are close for a specific or class of ensembles. In the work \cite{Diaconis2001}, it was shown that the CLT holds for $L^2(\TT)$ test functions with the Circular Unitary Ensemble (CUE), the set of unitary matrices $(M_n)_{n \in \NN}$ with distribution given by the Haar measure on unitary groups, whenever the limiting variance is finite. 
Most relevent to our work is the preprint of Kopel \cite{Kopel2015} which investigated minimal regularity conditions for the CLT to hold for the GUE. Kopel's paper shows, for test functions supported in the bulk, that the variance of the LSS converges iff the expression for the limiting variance is finite. However, the treatment of the soft edges appears incomplete \footnote{Private communications with Benjamin Landon.}. 

The typical method of showing the CLT involves proving estimates or bounds on the variance of $\mathcal{N}_n^\circ[f]$. For random matrix ensembles with determinantal $N$-point correlation functions, the variance is given by (\cite{Pastur2011}, 1.1.39):
  \begin{equation} \label{eq:varKernalRep}
  \begin{split}
    \T{Var}(\mathcal{N}_n^\circ[f]) & = \frac{1}{2}\iint \abs{f(\lambda_1) - f(\lambda_2)}^2 K_n(\lambda_1, \lambda_2)^2 d\lambda_1 d\lambda_2,
  \end{split}
  \end{equation}
where,
  \begin{equation*}
   K_n(x, y) := \sum_{k=0}^{n-1} \psi_k(nx)\psi_k(ny)
  \end{equation*}
for suitable functions $\psi_k$. In the case of the GUE, the $\psi_{k}$ for $K_n(x, y)$ in the variance formula \eqref{eq:varKernalRep} takes the form of normalized Hermite polynomials. For the LUE, $\psi_k^{(\alpha)}$ are the normalized Laguerre polynomials \eqref{info:weights}, with $\alpha = m-n$ \eqref{eq:coVarDefs}. The limiting variance for the LUE is (\cite{Pastur2011}, Remark 4.1):
  \begin{equation}\label{eq:VLUE}
      V_{\T{LUE}}[f] := \int_{0}^4 \int_{0}^4\Big(\frac{f(x)-f(y)}{x-y}\Big)^2 \frac{4-(x-2)(y-2)}{4\pi^2 \sqrt{4-(x-2)^2}\sqrt{4-(y-2)^2}} dx dy.
  \end{equation}
We note the limiting expression for the variance of the GUE is similar \eqref{eq:GUEVar}. The main takeaway of these expressions is that the limiting variances of the GUE and LUE share a common feature. They both have a singular part in the integral that is carried through from $\T{Var}(\mathcal{N}_n^\circ[f])$: 
  \begin{equation}\label{eq:FDefs}
   F(x, y):= \Big(\frac{f(x) - f(y)}{x-y}\Big)^2.
  \end{equation}
In particular, this is the integrand of the $\dot{H}^{\frac{1}{2}}(\RR)$-seminorm. We recall that the $\dot{H}^s(\RR)$-seminorm is defined:
  \begin{equation} \label{eq:HhalfNorm}
   \norm{f}_{\dot{H}^s} := \int_\RR \abs{\xi}^{2s}\abs{\hat{f}(\xi)}^2 d\xi = c\iint_{\RR^2} \frac{(f(x) - f(y))^2}{(x-y)^{2s + 1}} dx dy,
  \end{equation}
where $\hat{f}$ denotes the Fourier transform and $c > 0$ is a suitable constant. In particular, the space $H^s(\RR)$ consists of $L^2$-functions with finite $\dot{H}^s$-seminorm. From this we see Landon-Sosoe's \cite{Landon2022} result for (smooth) real symmetric Wigner matrices, where the test function $f$ only needs to be $H^{\frac{1}{2} + \epsilon}(\RR)$ and supported in the bulk $(-2 + \delta, 2-\delta)$, is approaching Johansson's conjecture. The term $F(x, y)$ also shows up more generally in the limiting variance of sample covariance ensembles \cite{Pastur2011}.

To see where the $x-y$ is coming from, we note that similar to the GUE and other exactly solvable ensembles, the kernel $K_n(x, y) = K_n^{(\alpha)}(x, y)$ (where $\alpha = m-n$) of the LUE admits a Christoffel--Darboux formula \cite{Forrester1993}:
  \begin{equation} \label{eq:ChrisDarb}
  \begin{split}
    K_n^{(\alpha)}(x, y) = \sum_{k=0}^{n-1} \psi_k^{(\alpha)}(nx)\psi_k^{(\alpha)}(ny) = \sqrt{n(n+\alpha)} \frac{\psi_{n-1}^{(\alpha)}(nx)\psi_{n}^{(\alpha)}(ny) - \psi_{n}^{(\alpha)}(nx)\psi_{n-1}^{(\alpha)}(ny)}{x-y},
  \end{split}
  \end{equation}
where $\psi_k^{(\alpha)}$ are the normalized Laguerre polynomials \eqref{info:weights} (hence the name). 

In this work, we are primarily interested in the fluctuations of the LSS for the LUE when $d = 1$. Our work improves substantially on all existing regularity results for the CLT in random matrix theory. In particular, the LUE with $d = 1$ has both a soft edge and a hard edge, and is therefore more technically challenging than the GUE and CUE studied by others. Secondly, for test functions in the bulk, our work improves on the results of Landon-Sosoe \cite{Landon2022} by removing the $+\epsilon$ in their $H^{\frac{1}{2} + \epsilon}$ condition. The main results of the paper is as follows:
\begin{theorem} \label{thm:VarConvergence}
  Let $\mathcal{N}_n^\circ[f]$ be the centred LSS \eqref{defs:CenteredStat} of the LUE with $m=n+\alpha$ where $\alpha\in \NN$ is fixed. Suppose $f\in L^\infty([0,\infty))$ and there exists $\epsilon > 0$ such that
  \begin{equation} \label{eq:Assumption1}
    V_{\T{LUE}}^\epsilon[f] := \int_0^{4+\epsilon}\int_0^{4+\epsilon} F(x, y)\frac{1}{8\pi^2}\Big(\frac{\sqrt{\abs{(4-x)y}}}{\sqrt{\abs{(4-y)x}}} + \frac{\sqrt{\abs{(4-y)x}}}{\sqrt{\abs{(4-x)y}}}\Big)dx dy < \infty.
  \end{equation}
Then,
  \begin{equation*}
    \lim_{n\to\infty} \T{Var}(\mathcal{N}_n^\circ[f]) = V_{LUE}[f] := \int_0^4 \int_0^4 F(x, y) \Xi(x, y)dx dy.
  \end{equation*}
Where $F(x, y)$ is defined in \eqref{eq:FDefs} and,
    \begin{equation} \label{eq:XiDefs}
     \Xi(x, y) := \frac{4-(x-2)(y-2)}{4\pi^2 \sqrt{4-(x-2)^2}\sqrt{4-(y-2)^2}}.
    \end{equation}
\end{theorem}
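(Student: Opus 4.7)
The plan is to start from the determinantal variance formula \eqref{eq:varKernalRep} and substitute the Christoffel--Darboux identity \eqref{eq:ChrisDarb}, which yields
\begin{equation*}
 \T{Var}(\mathcal{N}_n^\circ[f]) = \frac{n(n+\alpha)}{2} \iint_{[0,\infty)^2} F(x,y)\, \Psi_n(x,y)\, dx\, dy,
\end{equation*}
where $\Psi_n(x,y) := \bigl(\psi_{n-1}^{(\alpha)}(nx)\psi_{n}^{(\alpha)}(ny)-\psi_{n}^{(\alpha)}(nx)\psi_{n-1}^{(\alpha)}(ny)\bigr)^2$; note that the singular factor $(x-y)^2$ from squaring $K_n$ has been absorbed into $F$. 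The goal is then a dominated convergence argument: establish pointwise convergence $\tfrac{n(n+\alpha)}{2}\Psi_n(x,y) \to \Xi(x,y)$ on $(0,4)^2$ and to zero outside $[0,4]^2$, while producing an integrable majorant whose shape matches the envelope in \eqref{eq:Assumption1}. A direct computation shows that on $[0,4]^2$ this envelope coincides with $\Xi(x,y)$, so the hypothesis will exactly feed the dominated convergence step.

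Both the pointwise limit and the majorant are obtained by partitioning $[0,\infty)^2$ into four regimes dictated by the Plancherel--Rotach asymptotics of $\psi_n^{(\alpha)}(nx)$. In the bulk $(0,4)$, $\psi_n^{(\alpha)}(nx)$ is oscillatory with envelope of order $(nx)^{-1/2}(4-x)^{-1/4}$, and the phase differential between $\psi_{n-1}$ and $\psi_n$ produces a non-oscillatory ``diagonal'' term matching $\Xi(x,y)$ plus rapidly oscillating cross terms in $n(\theta(x)\pm\theta(y))$; the latter contribute to the majorant with the same envelope and average to zero after integration against $F$. Near the hard edge $x=0$ one invokes Bessel-type asymptotics, which after rescaling reproduce the $x^{-1/2}$ behavior built into $\Xi$. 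Near the soft edge $x=4$ one invokes Airy-type asymptotics on the scale $n^{-2/3}$. Far outside the bulk $\psi_n^{(\alpha)}(nx)$ decays exponentially in $n$, so $\Psi_n \to 0$ there, and the $L^\infty$ hypothesis on $f$ renders the resulting tail contribution negligible.

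The main obstacle, as in previous treatments of edge behavior, is the soft edge at $x=4$. A naive bound from the bulk asymptotics breaks down on the transition scale $n^{-2/3}$, on which $K_n$ is of size $O(n^{2/3})$ and behaves as the Airy kernel; moreover eigenvalues leak into $(4, 4+\epsilon)$, which is precisely why the hypothesis \eqref{eq:Assumption1} extends the domain of integration to $[0, 4+\epsilon]^2$ and uses absolute values inside the square roots. Matching the Airy-scale contribution to the singular $|4-x|^{-1/2}$ envelope requires a change of variable $x = 4 + n^{-2/3} t$ together with the super-exponential decay of the Airy function on $[0,\infty)$, so that the ``overspill'' region $(4, 4+\epsilon)$ is absorbed into the majorant. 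Once the pointwise convergence and the majorant dominated by \eqref{eq:Assumption1} are verified in all four regimes, Lebesgue's dominated convergence theorem delivers the stated identity.
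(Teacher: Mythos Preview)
Your decomposition into bulk/hard-edge/soft-edge/exterior regimes is the same one the paper uses, but the argument you sketch has two genuine gaps.

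First, the ``dominated convergence'' framework breaks down in the bulk. The Plancherel--Rotach asymptotics for $\psi_{\tilde n}^{(\alpha)}(nx)^2$ contain an oscillatory factor $\cos\bigl((2\tilde n+\alpha+1)(\sin 2\phi-2\phi)\bigr)$ (see \eqref{eq:PRBulkSquare}), so $\tfrac{n(n+\alpha)}{2}\Psi_n(x,y)$ does \emph{not} converge pointwise to $\Xi(x,y)$ on $(0,4)^2$; it is $\Xi(x,y)$ plus a term of the same size that keeps oscillating. You notice this yourself when you say the cross terms ``average to zero after integration against $F$,'' but that is a Riemann--Lebesgue statement, not a pointwise limit, and it is incompatible with the DCT you invoke in the previous paragraph. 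The paper resolves this by first proving the convergence $\iint f\,\Phi_n^{(\alpha)}\to\iint f\,\Xi$ for \emph{bounded} $f(x,y)$ (Proposition~\ref{prop:Double}, via Riemann--Lebesgue and Egoroff), and only then uses the majorant to shrink the regions where $F(x,y)$ is unbounded (near the diagonal and the edges) so that the bounded-case result applies on the complement.

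Second, the edge majorant is not obtained by plugging in Bessel or Airy asymptotics termwise. A naive bound on $\psi_{n-1}^{(\alpha)}(nx)\psi_n^{(\alpha)}(ny)-\psi_n^{(\alpha)}(nx)\psi_{n-1}^{(\alpha)}(ny)$ via $|\psi_{\tilde n}^{(\alpha)}(nx)|$ alone fails on the transition scales: the individual $\psi$'s are too large there. The paper's Lemmas~\ref{lem:softEdgeBound} and~\ref{lem:hardEdgeBound} instead rewrite the Christoffel--Darboux numerator as $\psi_{n-1}(nx)[\psi_n(ny)\pm\psi_{n-1}(ny)]-\psi_{n-1}(ny)[\psi_n(nx)\pm\psi_{n-1}(nx)]$ and exploit the smallness of the \emph{difference} $\psi_n-\psi_{n-1}$ (respectively the sum at the soft edge), bounding it by the derivative of the Bessel/Airy function after estimating the $O(n^{-1})$ shift in the argument. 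Without this cancellation you will not get the $|4-x|^{1/2}/|4-y|^{1/2}$ and $x^{1/2}/y^{1/2}$ factors that match the envelope in \eqref{eq:Assumption1}. Your outline (``invokes Bessel-type asymptotics, which after rescaling reproduce the $x^{-1/2}$ behavior'') skips precisely this step.
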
 

\begin{lemma} \label{lem:VLUEApprox}
  Suppose $f$ satisfies \eqref{eq:Assumption1}. Then there exists a sequence of smooth functions $f^n$ in $L^\infty([0,4])$ which approximates $f$ in the $V_{\T{LUE}}$ seminorm. i.e. given $\epsilon > 0$ there exists $N$ such that $n > N$ implies
    \[V_{\T{LUE}}[f-f^n] < \epsilon.\]
\end{lemma}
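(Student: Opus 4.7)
The plan is to reduce $V_{\T{LUE}}$ to the standard $\dot H^{1/2}(\TT)$-seminorm via the change of variable $x = 2-2\cos\theta$, $y = 2-2\cos\phi$ (mapping $[0,\pi]$ onto $[0,4]$), and then apply the classical density of smooth functions on the torus. Let $g(\theta) := f(2-2\cos\theta)$, which defines an even $2\pi$-periodic function on $\TT$ since $\cos$ is. Using $\sqrt{4-(x-2)^2} = 2\sin\theta$, $dx = 2\sin\theta\,d\theta$, the identities $4-(x-2)(y-2) = 4(1-\cos\theta\cos\phi) = 4\bigl(\sin^2\tfrac{\theta-\phi}{2}+\sin^2\tfrac{\theta+\phi}{2}\bigr)$, and $(x-y)^2 = 16\sin^2\tfrac{\theta+\phi}{2}\sin^2\tfrac{\theta-\phi}{2}$, a direct computation yields
$$V_{\T{LUE}}[f] = \frac{1}{16\pi^2}\int_0^\pi\!\!\int_0^\pi (g(\theta)-g(\phi))^2\Big(\frac{1}{\sin^2\tfrac{\theta-\phi}{2}}+\frac{1}{\sin^2\tfrac{\theta+\phi}{2}}\Big)\,d\theta\,d\phi.$$
Splitting the integral over $\TT^2 = [-\pi,\pi]^2$ into its four quadrants and using the evenness of $g$, this equals $\tfrac{1}{32\pi^2}\iint_{\TT^2}(g(\theta)-g(\phi))^2/\sin^2\tfrac{\theta-\phi}{2}\,d\theta\,d\phi$, which is (up to a constant) the $\dot H^{1/2}(\TT)$-seminorm $\|g\|_{\dot H^{1/2}(\TT)}^2$.

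Since the two weights in $V_{\T{LUE}}$ and \eqref{eq:Assumption1} coincide on $[0,4]^2$ we have $V_{\T{LUE}}[f] \le V_{\T{LUE}}^\epsilon[f] < \infty$, and $g \in L^\infty(\TT)$ because $f\in L^\infty$; thus $g\in H^{1/2}(\TT)$. I would then convolve $g$ with an even smooth mollifier $\phi_{1/n}$ on $\TT$ to obtain smooth even $2\pi$-periodic $g^n := g \ast_{\TT} \phi_{1/n}$ with $\|g^n\|_{L^\infty}\le\|g\|_{L^\infty}$ and $\|g^n-g\|_{\dot H^{1/2}(\TT)}\to 0$; the density is standard for $H^{1/2}(\TT)$. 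Each smooth even $g^n$ admits a Fourier cosine expansion $g^n(\theta) = \sum_k a_k^n\cos(k\theta)$ which, via $\cos(k\theta) = T_k(\cos\theta)$, rewrites as $g^n(\theta) = P^n(\cos\theta)$ for a smooth function $P^n$ on $[-1,1]$.

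Finally I define $f^n(x) := P^n(1-x/2)$, giving smooth bounded functions on $\RR$ with $f^n(2-2\cos\theta) = g^n(\theta)$ and $\|f^n\|_{L^\infty}\le\|f\|_{L^\infty}$; reversing the change of variable then gives $V_{\T{LUE}}[f-f^n] = \tfrac{1}{32\pi^2}\|g-g^n\|_{\dot H^{1/2}(\TT)}^2 \to 0$, which is the desired approximation. The computational heart of the argument is the change-of-variable identity expressing $V_{\T{LUE}}$ as a torus Sobolev seminorm — once this is in place the density statement is classical. A more direct approach of mollifying $f$ on $[0,4]$ after extending past the hard and soft edges is also viable but forces one to control the singular weight $\Xi$ at $x\in\{0,4\}$ by hand, which the change of variable absorbs automatically.
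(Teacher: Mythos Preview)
Your proof is correct and takes a genuinely different, more direct route than the paper's. Both approaches ultimately rest on the same identification of $V_{\T{LUE}}$ with an $\dot H^{1/2}$-type norm via the Chebyshev substitution $x = 2-2\cos\theta$, but the paper arrives there by a considerably longer path: it shifts to $V_{\T{GUE}}$, introduces the singular integral operator $K$ of \eqref{defs:KOperator}, proves by induction that $KP_n = \tfrac{n}{2}P_n$ (Lemma~\ref{lem:KKernal}), computes the heat kernel $K_t$ explicitly, shows $K_t/t$ is monotone in $t$ by a hands-on calculation (Lemma~\ref{lem:KKernalNegative}), and only then deduces $4V_{\T{GUE}}[f] = \sum n a_n^2$ (Lemma~\ref{lem:seminormsEquiv}) and approximates by Chebyshev partial sums. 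Your trigonometric change of variables collapses all of this into a two-line identity and then invokes the classical density of smooth functions in $H^{1/2}(\TT)$. The paper's route has the incidental benefit of making the Chebyshev-coefficient formula $V_{\T{GUE}}[f] = \tfrac14\sum n a_n^2$ explicit (connecting to Johansson's work), but for the lemma as stated your argument is substantially more economical.

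One small presentational point: your passage from the smooth even $g^n$ to a smooth $P^n$ on $[-1,1]$ via the Fourier cosine expansion and $\cos(k\theta) = T_k(\cos\theta)$ is a bit loose as written, since the series rewriting does not by itself guarantee smoothness of the sum at the endpoints $\pm 1$. The correct statement---that a smooth even $2\pi$-periodic function is a smooth function of $\cos\theta$---is standard and follows from local analysis at $\theta = 0,\pi$ (evenness forces the Taylor expansion in $\theta$ to contain only even powers, and $\theta^2$ is a smooth function of $1-\cos\theta$). This is a minor wording issue, not a gap.
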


\begin{corollary}\label{cor:CLT}
  Let $\mathcal{N}_n^\circ[f]$ be the centred LSS \eqref{defs:CenteredStat} of the LUE with $m=n+\alpha$ where $\alpha\in \NN$ is fixed. Suppose $f\in L^\infty([0,\infty))$ such that there exists $\epsilon > 0$ so that \eqref{eq:Assumption1} is satisfied. Then the centred LSS, $\mathcal{N}_n^\circ[f]$, converges in distribution, as $n \to \infty$, to a Gaussian with mean zero and variance $V_{LUE}[f]$ \eqref{eq:VLUE}. 
\end{corollary}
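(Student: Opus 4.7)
My plan is to prove the corollary by a standard density argument that combines Lemma \ref{lem:VLUEApprox}, Theorem \ref{thm:VarConvergence}, and the CLT for smooth test functions. The point is that, once the sharp variance asymptotics of Theorem \ref{thm:VarConvergence} and the density statement of Lemma \ref{lem:VLUEApprox} are in hand, the CLT for general $f$ reduces to the much better understood regime of smooth, compactly supported functions.

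First I would invoke Lemma \ref{lem:VLUEApprox} to produce smooth $f^k \in L^\infty([0,4])$ with $V_{\T{LUE}}[f - f^k] \to 0$; after a routine smooth extension, I may assume each $f^k$ is compactly supported in a small neighborhood of $[0,4]$. For each such $f^k$, the CLT
\[\mathcal{N}_n^\circ[f^k] \xrightarrow{d} \mathcal{N}\bigl(0,\, V_{\T{LUE}}[f^k]\bigr)\]
is a classical statement in the LUE setting, which can be established for example by expanding $f^k$ in a Chebyshev basis under the change of variables $x = 2 + 2\cos\theta$ (mapping $[0,4]$ to itself) and computing joint cumulants of the corresponding Chebyshev LSS via the Christoffel--Darboux identity \eqref{eq:ChrisDarb} together with the Laguerre asymptotics that already underlie the proof of Theorem \ref{thm:VarConvergence}. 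This is the Chebyshev-polynomial approach alluded to in the abstract.

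Next I would apply the elementary characteristic-function bound
\[\bigl|\phi_{\mathcal{N}_n^\circ[f]}(t) - \phi_{\mathcal{N}_n^\circ[f^k]}(t)\bigr| \le |t|\,\sqrt{\T{Var}\bigl(\mathcal{N}_n^\circ[f - f^k]\bigr)},\]
valid for every $t \in \RR$, where the expectation in the variance is zero by construction of the centred LSS and where linearity yields $\mathcal{N}_n^\circ[f] - \mathcal{N}_n^\circ[f^k] = \mathcal{N}_n^\circ[f - f^k]$. Because each $f^k$ is smooth and compactly supported, the inequality $(a-b)^2 \le 2a^2 + 2b^2$ gives $V_{\T{LUE}}^\epsilon[f - f^k] < \infty$, so Theorem \ref{thm:VarConvergence} applies to $f - f^k$ and yields
\[\lim_{n \to \infty} \T{Var}\bigl(\mathcal{N}_n^\circ[f - f^k]\bigr) = V_{\T{LUE}}[f - f^k].\]
The reverse triangle inequality for the seminorm $V_{\T{LUE}}^{1/2}$ likewise gives $V_{\T{LUE}}[f^k] \to V_{\T{LUE}}[f]$ as $k \to \infty$. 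A three-$\epsilon$ argument (fix $k$ large so that both $V_{\T{LUE}}[f-f^k]$ and $|V_{\T{LUE}}[f^k] - V_{\T{LUE}}[f]|$ are small, send $n \to \infty$ using the CLT for $f^k$, then send $k \to \infty$) produces pointwise convergence of the characteristic functions of $\mathcal{N}_n^\circ[f]$ to $e^{-t^2 V_{\T{LUE}}[f]/2}$, and L\'evy's continuity theorem completes the proof.

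The main obstacle is not the approximation argument itself but locating or re-proving the CLT for the smooth approximants in a form compatible with the variance expression $V_{\T{LUE}}$. Once this classical ingredient is supplied -- either by citing the literature or by a direct computation of cumulants of the LSS of Chebyshev polynomials using the Christoffel--Darboux kernel and Laguerre asymptotics -- the corollary follows as a soft consequence of Theorem \ref{thm:VarConvergence} and Lemma \ref{lem:VLUEApprox}.
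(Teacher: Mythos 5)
Your proposal follows essentially the same route as the paper's one-sentence proof sketch: approximate $f$ by the smooth Chebyshev-polynomial truncations supplied by Lemma \ref{lem:VLUEApprox}, invoke a classical CLT for those smooth approximants, and close the gap via the characteristic-function bound $\lvert\phi_X(t)-\phi_Y(t)\rvert \le \lvert t\rvert\,\T{Var}(X-Y)^{1/2}$ combined with the variance asymptotics of Theorem \ref{thm:VarConvergence}. The only difference is that the paper pins down the smooth-function CLT by citing Theorem 4.2 of Pastur--Shcherbina (\cite{Pastur2011}), whereas you leave this ingredient as ``classical'' to be supplied, which is precisely what the paper does; the rest of your three-$\epsilon$ argument is the intended proof, spelled out in more detail than the paper provides.
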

\begin{proof}
 This follows from the pointwise convergence of the characteristic functions. This is done by using Theorem \ref{thm:VarConvergence}, approximating $L^\infty$ functions with Chebyshev polynomials using Lemma \ref{lem:VLUEApprox}, and applying Theorem 4.2 of \cite{Pastur2011}.  
\end{proof}

The condition \eqref{eq:Assumption1} is very close to only assuming the limiting variance is finite. In fact when $x, y \le 4$, we have:
  \[\frac{1}{8\pi^2}\bigg(\frac{\sqrt{\abs{(4-x)y}}}{\sqrt{\abs{(4-y)x}}} + \frac{\sqrt{\abs{(4-y)x}}}{\sqrt{\abs{(4-x)y}}}\bigg) = \frac{4-(x-2)(y-2)}{4\pi^2 \sqrt{4-(x-2)^2}\sqrt{4-(y-2)^2}} = \Xi(x, y).\]
The conjecture of Johansson would imply that there is no need for any condition on the function outside of the bulk. However, the normalized Laguerre polynomials, while decaying exponentially, are not $0$ outside the support of the spectrum. It is only natural to expect that some condition is necessary outside the spectrum; our condition  \eqref{eq:Assumption1} essentially amounts to extending the variance functional outside the spectrum in a natural way. Due to the exponential decay, it is possible that the condition could be marginally improved but we do not pursue this here.

A consequence of Corollary \ref{cor:CLT} is that for the LUE and any bounded function, the hard edge does not contribute any extra noise. This is unlike the case of the Ginibre ensemble, which observes a $H^{1}$-noise on the unit disk and an independent $H^{1/2}$ noise on the unit circle \cite{Brian2007}. The Ginibre ensemble is the set of $n\times n$ random complex matrices with entries which are i.i.d complex Gaussians with variance $1/n$. Its limiting empirical distribution of eigenvalues is the uniform distribution on the unit disk.

We now give a bit more discussion on the implications and background of our result. The commonality of the limiting variance of many ensembles is the singular term $F(x, y)$ in \eqref{eq:FDefs}. It is also where most of the difficulties arise when trying to improve the regularity conditions. When the test function $f$ is sufficiently regular, this term does not pose any difficulties. Lytova-Pastur's result \cite{Lytova2009}, showed the CLT holds for both the GUE and LUE, among others, when the test function $f$ is bounded with bounded derivative, and obtained the same limiting variance as \eqref{eq:VLUE}. Their regularity condition implies that $F(x, y)$ is uniformly bounded in $x, y \in [0, \infty)$. Shcherbina's result \cite{Shcherbina2011} on the closely related Laguerre Orthogonal Ensemble, where the entries of $X$ \eqref{eq:coVarDefs} are real instead of complex, needed the regularity condition of $H^{\frac{3}{2} + \epsilon}(\RR)$. In fact, most previous results about the LUE and the more general random sample covariance matrices assumed that $f$ was sufficiently regular so that $F(x, y)$ is bounded, or at minimum that $F(x, y)$ is well-defined everywhere (i.e. \cite{Bai2010, Najim2016}). The function $F(x, y)$ also appears in the limiting variance of $\beta$-ensembles \cite{Johansson1998}, among others. Terms of similar structure also show up in the limiting variance of other ensembles, such as the CUE \cite{Diaconis2001}, being a scaling of the $\dot{H}^\frac{1}{2}(\TT)$-seminorm:
  \begin{equation*}
    V_{\T{CUE}}[f] := \frac{1}{16\pi^2} \int_0^{2\pi} \int_0^{2\pi} \Big(\frac{f(\phi) - f(\theta)}{\sin(\frac{\phi-\theta}{2})}\Big)^2 d\theta d\phi.
  \end{equation*}

In our proof for Theorem \ref{thm:VarConvergence}, we are able to relax the regularity conditions on the test function $f$ by exploiting exact formulas and asymptotics for $\psi_k^{(\alpha)}$. We show that the contribution to the limiting variance from $\abs{x - y} < \delta$, for small $\delta > 0$, is insignificant. We expect that with appropriate asymptotics, the results should extend to the case for the LUE where $0 <d < 1$ as well as the GUE and possibly other ensembles with exact $N$-point correlation functions.

\subsection{Methodology} We now outline the approach used in this paper. The details of the proofs for Theorem \ref{thm:VarConvergence} and Lemma \ref{lem:VLUEApprox} are found in Sections \ref{sec:Thm1Proof} and \ref{sec:Lem1Proof} respectively. The sketch of the proofs are as follows. For Theorem \ref{thm:VarConvergence}, we split our domain $[0, \infty)^2$ into several (overlapping) pieces,
  \begin{equation*}
    \Omega_1^{\epsilon_1} = [0, 4+\epsilon_1)^2 ,\ \ \Omega_2^{\epsilon_2} = [4+\epsilon_2, \infty)^2,\ \ \Omega_3 = [0, \infty)^2 \setminus (\Omega_1^{\epsilon_1} \cup \Omega_2^{\epsilon_2}),
  \end{equation*}
 where $\epsilon_i > 0$, $\epsilon_2$ is much less than $\epsilon_1$, and showed convergence on each of them separately. This is done in four steps:
  \begin{enumerate}
   \item On $\Omega_3$ and whenever $\abs{x - y}$ is uniformly bounded away from $0$, $F(x, y)$ is uniformly bounded in $x, y$. In particular, this allows us to directly use bulk asymptotics, i.e. $x \in [\delta, 4-\delta]$, for the Laguerre polynomials to obtain convergence results for $F(x, y) \in L^\infty([\delta, 4-\delta]^2)$. Extending the convergence to $F(x, y)\in L^\infty([0, \infty)^2)$ essentially amounts to bounding the contribution from the region outside $[\delta, 4-\delta]^2$ using the $L^\infty$-norm of $F(x, y)$. 
   \item On $\Omega_1^{\epsilon_1}$, we use universal asymptotics for the soft and hard edge to show that $\abs{\Phi_n^{(\alpha)}(x, y)}$ is bounded by $C\Xi(x, y), C>0$. The majority of the work for showing this bound is to deal with the case when both $x$ and $y$ are near the soft or hard edge of the spectrum. In particular, we bound $(x-y)^2K_n^{(\alpha)}(x, y)^2$ by derivatives of Airy and Bessel functions.
   \item On $\Omega_2^{\epsilon_2}$, we show that $K_n^{(\alpha)}(x, y)^2$ itself was $L^1$, allowing us to bypass the problem of $F(x, y)$ blowing up. Returning to the variance formula \eqref{eq:varKernalRep} finishes the proof for $\Omega_2^{\epsilon_2}$. 
   \item Theorem \ref{thm:VarConvergence} follows from combining the three results above with $\Omega_1^{\epsilon}$ and $\Omega_2^{\epsilon/2}$ for sufficiently small $\epsilon > 0$.
  \end{enumerate}
The approach for the region $\Omega_3$ is inspired by Kopel's preprint \cite{Kopel2015}. However the approach used near the edges is entirely new. Lemma \ref{lem:VLUEApprox} follows from first relating $V_{\T{LUE}}$ \eqref{eq:VLUE} to $V_{\T{GUE}}$ \eqref{eq:GUEVar} and then exploiting the well-known connection \cite{Johansson1998} between $V_{\T{GUE}}$ and Chebyshev polynomials. Essentially we show that $V_{\T{GUE}}$ is norm-equivalent to the $\dot{H}^\frac{1}{2}$-seminorm in the the Chebyshev basis -- see Lemma \ref{lem:seminormsEquiv}.

\subsection{On notation and convention}

  \begin{enumerate}
   \item In general, $N$ can vary from line to line. We will also generally omit `there exists sufficiently large $N$ such that for all $n > N$' and simply write `for $n > N$'. Likewise, constants denoted by $C$ will vary from line to line -- in cases where we need to differentiate them we will index them $C_1$, etc. By `positive' constant we mean $C > 0$.
   \item By `$a$ is much less than $b$' we mean that $0 \le a < \frac{b}{100}$. Also, $\alpha$ always denotes a fixed non-negative integer. We use $\nn$ to denote anything of form $n-k$ where $k$ is some fixed non-negative integer that doesn't depend on $n$.
   \item Links to definitions: $\mathcal{N}_n^\circ[f]$ -- \eqref{defs:CenteredStat}; LUE -- \eqref{eq:coVarDefs}; $K_n^{(\alpha)}$ -- \eqref{eq:ChrisDarb};  $V_{\T{GUE}}$ -- \eqref{eq:GUEVar}; $F(x, y)$ -- \eqref{eq:FDefs};  $V_{\T{LUE}}$ -- \eqref{eq:VLUE}; $\Xi$ -- \eqref{eq:XiDefs}; $\psi_n^{(\alpha)}$ -- \eqref{info:weights}; $\Phi_n^{(\alpha)}$ -- \eqref{eq:PhiDefs}; $\kappa$ -- \eqref{eq:kappaDefs}.
  \end{enumerate}

\subsection{Acknowledgements}
The author would like to thank his advisors Benjamin Landon, Jeremy Quastel, and Philippe Sosoe for their many hours of interesting discussions, patient advising, and insightful comments. This work would not be possible without their support. This work was supported by funding from an NSERC USRA.

\section{Convergence of variance (proof of Theorem \ref{thm:VarConvergence})} \label{sec:Thm1Proof}
\subsection{Relevant asymptotics and background}
We begin by defining the polynomials of interest. The normalized Laguerre polynomials are:
  \begin{equation}\label{info:weights}
    \psi_n^{(\alpha)}(x) := \Big(\frac{n!}{\Gamma(n+\alpha + 1)}\Big)^{\frac{1}{2}}e^{-x/2}x^{\alpha/2} L_n^\alpha(x), \T{ with } \int e^{-x}x^\alpha L_n^{(\alpha)}L_m^{(\alpha)}  = \frac{\Gamma(n+\alpha + 1)}{n!}\delta_{nm}.
  \end{equation}
We will be using this definition of $\psi_n^{(\alpha)}$ for the rest of the paper. Using symmetry, we can write the variance of the LSS of the LUE \eqref{eq:varKernalRep} as:
  \begin{equation*} \label{eq:varLinStat}
  \begin{split}
   \T{Var}(\mathcal{N}_n^\circ[f]) & = \frac{1}{2}\int_0^\infty \int_0^\infty F(x, y) n(n+\alpha) [\psi_{n-1}^{(\alpha)}(nx)\psi_{n}^{(\alpha)}(ny) - \psi_{n}^{(\alpha)}(nx)\psi_{n-1}^{(\alpha)}(ny)]^2 dx dy \\
    & = \int_0^\infty \int_0^\infty F(x, y) \Phi_n^{(\alpha)}(x, y) dx dy,
  \end{split}
  \end{equation*}
where $F$ is given by \eqref{eq:FDefs} and we define:
  \begin{equation} \label{eq:PhiDefs}
   \Phi_n^{(\alpha)}(x, y) := n(n+\alpha) [\psi_{n-1}^{(\alpha)}(nx)^2 \psi_{n}^{(\alpha)}(ny)^2 -\psi_{n}^{(\alpha)}(nx)\psi_{n-1}^{(\alpha)}(nx)\psi_{n}^{(\alpha)}(ny)\psi_{n-1}^{(\alpha)}(ny)].
  \end{equation}
We introduce several asymptotics for the Laguerre polynomials which we will use later. In particular we will need a bulk asymptotic as well as universal asymptotics for the hard and soft edges, all of which will be given in a simplified form. 

We introduce the convention of using $\tilde{n}$ to denote anything of form $\tilde{n} = n-k$ where $k$ is an integer not dependent on $n$. Recall the definition of $\psi_{\nn}^{(\alpha)}$ given by \eqref{info:weights}. Using $\sin^2\phi = \frac{1-\cos 2\phi}{2}$ and the asymptotics for the Laguerre polynomials from (\cite{Szego1975}, 8.22.9), we get the bulk asymptotic:
  \begin{equation} \label{eq:PRBulkSquare}
  \begin{split}
    n\psi_{\nn}^{(\alpha)}(nx)^2 & = \frac{(1-O(n^{-1}))}{2\pi\sqrt{x} \sin \phi} \Big[1-\cos\Big(\Big(2\nn + \alpha + 1\Big)(\sin2\phi - 2\phi) + \frac{3\pi}{2}\Big) + O((\nn x)^{-\frac{1}{2}})\Big], \\ &\T{where } nx = (4\nn+2\alpha + 2)\cos^2 \phi, \qquad \delta \le \phi \le \pi/2-\delta \nn^{-\frac{1}{2}}/4, \qquad \delta > 0.
  \end{split}
  \end{equation}
This asymptotic holds, given sufficiently large $n$, for $\delta < x < 4-\delta$. Near the soft edge, after simplification using Stirling's formula, we have the uniform asymptotic for $2 \le x$ (\cite{Muckenhoupt1970}, 1.2):
  \begin{equation} \label{eq:UnifSoftEdge}
   \sqrt{n}\psi_{\nn}^{(\alpha)}(nx) = (-1)^{\nn}(1+O(n^{-1}))\frac{(4\nn+2\alpha+2)^{\frac{1}{6}}\abs{\zeta(t)}^{\frac{1}{4}}}{\abs{x(1-t)}^{\frac{1}{4}}}\Big[\T{Ai}(-(4\nn+2\alpha+2)^{\frac{2}{3}}\zeta(t)) + O(E(x))\Big],
  \end{equation}
where $t = nx/(4\nn+2\alpha + 2)$,
  \begin{equation} \label{eq:zetaDefs}
    \zeta(t) := \begin{cases}
        \Big[\frac{3}{4}(\arccos \sqrt{t} - (t-t^2)^\frac{1}{2})\Big]^{\frac{2}{3}}, & 0 < t \le 1 \\
        -\Big[\frac{3}{4}((t^2-t)^\frac{1}{2} - \arccosh \sqrt{t})\Big]^{\frac{2}{3}}, & 1 < t.
  \end{cases}
  \end{equation}
and the error $E(x)$ is given in terms of the Airy functions of first and second kinds, Ai and Bi,
  \begin{equation} \label{eq:errorDefs}
  E(x) := \begin{cases}
    \frac{\tilde{\T{Ai}}(-(4\nn+2\alpha+2)^{\frac{2}{3}}\zeta(t))}{nx}, & \zeta(t) > 0\\
    \frac{\T{Ai}(-(4\nn+2\alpha+2)^{\frac{2}{3}}\zeta(t))}{nx}, & \zeta(t) \le 0,
  \end{cases}
  \end{equation}
where $\tilde{\T{Ai}}(x) = \sqrt{\T{Ai}^2(x) + \T{Bi}^2(x)}$. It follows that the error $E(x)$ is $O(n^{-1})$ for $x$ in a bounded region. Also $\zeta(t) > 0$ iff $t < 1$. Near the hard edge, once again after simplification using Stirling's formula, we have the uniform asymptotic for $0 < x \le 2$ (\cite{Muckenhoupt1970}, 1.1):
  \begin{equation} \label{eq:UnifHardEdge}
    \sqrt{n}\psi_{\nn}^{(\alpha)}(nx) = (1 + O(n^{-1}))(4\nn+2\alpha+2)^{\frac{1}{2}}\frac{(\eta(t))^{\frac{1}{2}}}{2x^{\frac{1}{4}}} (1-t)^{-\frac{1}{4}}[J_\alpha((4\nn+2\alpha+2)\eta(t)) + x^\frac{1}{2}O(n^{-1})],
  \end{equation}
where $t = nx/(4\nn+2\alpha + 2)$, $\eta(t) := \frac{1}{2}(t-t^2)^{\frac{1}{2}} + \frac{1}{2}\arcsin \sqrt{t}$, and $J_\alpha$ is the Bessel function of the first kind.

\subsection{Setup}
To prove Theorem \ref{thm:VarConvergence}, we will cut up the domain of integration in $\T{Var}(\mathcal{N}_n^\circ[f])$, namely $[0, \infty)^2$, into several smaller overlapping pieces, each of which in turn will need to be dealt with in even smaller sections:
  \begin{equation*}
    \Omega_1^{\epsilon_1} = [0, 4+\epsilon_1)^2 ,\ \ \Omega_2^{\epsilon_2} = [4+\epsilon_2, \infty)^2,\ \ \Omega_3 = [0, \infty)^2 \setminus (\Omega_1^{\epsilon_1} \cup \Omega_2^{\epsilon_2}),
  \end{equation*}
where $\epsilon_i > 0$ is small and $\epsilon_2$ is much less than $\epsilon_1$. For both $\Omega_1^{\epsilon_1}$ and $\Omega_2^{\epsilon_2}$, we will need to separately deal with the part where $x$ and $y$ are very close as well as when they are both close to the hard or soft edge, as that is where $\frac{1}{(x-y)^2}$ and\textbackslash or $\Phi_n^{(\alpha)}$ blows up. It will be convenient to prove a result concerning $\Omega_3$ first. 

\subsection{Convergence in \texorpdfstring{$\Omega_3$}{Omega3}} \label{sec:LInfinityProof}
In $\Omega_3$ we note that $\abs{x - y}$ is bounded away from $0$ which implies $F(x, y)$ is $L^\infty$. This motivates the following (more general) Proposition:
\begin{prop} \label{prop:Double}
For $f(x, y)\in L^\infty([0, \infty)^2)$,
  \begin{equation*}
    \lim_{n\to\infty} \int_0^\infty\int_0^\infty f(x, y) \Phi_n^{(\alpha)}(x, y) dx dy = \int_0^4\int_0^4 f(x, y)\Xi(x, y) dx dy,
  \end{equation*}
  where $\Phi_n^{(\alpha)}(x, y)$ and $\Xi(x, y)$ are defined in \eqref{eq:PhiDefs}, \eqref{eq:XiDefs}.
\end{prop}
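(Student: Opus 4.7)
The plan is to establish Proposition \ref{prop:Double} as a form of weak-* convergence of the densities $\Phi_n^{(\alpha)}$ to $\Xi\,\mathbf{1}_{[0,4]^2}$ against $L^\infty([0,\infty)^2)$. The approach has two ingredients: a tightness estimate that reduces the task to a shrunken bulk box, and a pointwise bulk asymptotic combined with Riemann--Lebesgue.

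\textbf{Step 1 (tightness).} I would first prove the uniform integrability bound
\begin{equation*}
\sup_{n \ge N} \iint_{[0,\infty)^2 \setminus [\delta, 4-\delta]^2} \abs{\Phi_n^{(\alpha)}(x,y)}\,dx\,dy \xrightarrow{\delta \to 0^+} 0.
\end{equation*}
Using $|ab| \le \tfrac{1}{2}(a^2+b^2)$ on the off-diagonal product $\psi_n^{(\alpha)}\psi_{n-1}^{(\alpha)}$ reduces this to controlling tensor products $n(n+\alpha)\psi_{\nn}^{(\alpha)}(nx)^2\psi_{\tilde m}^{(\alpha)}(ny)^2$ with $\nn,\tilde m \in \{n-1,n\}$. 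The orthonormality relation from \eqref{info:weights} combined with the change of variables $u = nx$ gives $\int_0^\infty n\psi_{\nn}^{(\alpha)}(nx)^2\,dx = 1$, so the tensor integral factorizes and it suffices to show the one-variable tail $\int_{[\delta,4-\delta]^c} n\psi_{\nn}^{(\alpha)}(nx)^2\,dx$ tends to $0$ as $\delta \to 0^+$, uniformly for large $n$. This tail is controlled on $[0,\delta]$ by the Bessel asymptotic \eqref{eq:UnifHardEdge}; on $[4-\delta,4+\epsilon]$ by the Airy asymptotic \eqref{eq:UnifSoftEdge} together with the decay of $\tilde{\T{Ai}}$ on the positive axis; and on $[4+\epsilon,\infty)$ by the standard exponential decay of the Laguerre polynomials outside the spectrum. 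Combining with $f \in L^\infty$ yields
\begin{equation*}
\Bigl|\iint_{[0,\infty)^2 \setminus B_\delta} f(x,y)\,\Phi_n^{(\alpha)}(x,y)\,dx\,dy\Bigr| \le o_\delta(1)\,\norm{f}_\infty,
\end{equation*}
where $B_\delta := [\delta, 4-\delta]^2$.

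\textbf{Step 2 (bulk asymptotics and Riemann--Lebesgue).} On the compact box $B_\delta$, I would substitute the bulk asymptotic \eqref{eq:PRBulkSquare} for each of the squared factors $n\psi_{\nn}^{(\alpha)}(nx)^2$ appearing in \eqref{eq:PhiDefs}, together with the analogous asymptotic for the off-diagonal product $\sqrt{n}\psi_n^{(\alpha)}(nx)\cdot\sqrt{n}\psi_{n-1}^{(\alpha)}(nx)$ obtained by writing $\sqrt{n}\psi_{\nn}^{(\alpha)}(nx) = (-1)^{\nn} a(x)\sin A_{\nn}(x) + O(n^{-1/2})$ (with amplitude $a(x)^2 = (\pi\sqrt{x}\sin\phi)^{-1}$) and applying product-to-sum identities. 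Substituting into \eqref{eq:PhiDefs} gives a decomposition
\begin{equation*}
\Phi_n^{(\alpha)}(x,y) = G_n(x,y) + R_n(x,y) + E_n(x,y), \qquad (x,y)\in B_\delta,
\end{equation*}
in which $G_n$ collects the non-oscillatory contributions and converges pointwise to some $G(x,y)$; $R_n$ is a finite sum of terms of the form $A(x,y)\cos(\Psi_n^\pm(x,y))$ where the smooth amplitude $A$ is uniformly bounded on $B_\delta$ and the phase gradient $\nabla\Psi_n^\pm$ has magnitude of order $n$; and $E_n = O(n^{-1/2})$ uniformly on $B_\delta$. Dominated convergence handles $\iint_{B_\delta} fG_n \to \iint_{B_\delta} fG$, and $E_n$ contributes $o(1)$. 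The oscillatory remainders $R_n$ vanish against $f\in L^\infty$ by Riemann--Lebesgue: single-variable oscillations reduce to the one-dimensional lemma applied in $x$ (resp. $y$), with the partial integral of $f$ against the smooth amplitude in the other variable furnishing an $L^1$ weight; mixed oscillations $\cos(\Psi_n^+(x)\pm\Psi_n^-(y))$ fall under the two-dimensional Riemann--Lebesgue lemma since both components of the phase gradient are of order $n$ on $B_\delta$. The identification $G=\Xi$ is obtained by direct calculation using $x-2 = 2\cos 2\phi_x$ and $\sqrt{4-(x-2)^2}= 2\sqrt{x}\sin\phi_x$, with the sanity check $\iint_{[0,\infty)^2}\Phi_n^{(\alpha)} \to 1 = \iint_{[0,4]^2}\Xi$ (the latter a direct trigonometric integral, the former from orthonormality of $\psi_n,\psi_{n-1}$).

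\textbf{Step 3 (removing the cutoff).} Since $\Xi$ has only integrable singularities $\sim x^{-1/2}$ and $\sim(4-x)^{-1/2}$ (and symmetrically in $y$), $\iint_{B_\delta} f\Xi \to \iint_{[0,4]^2} f\Xi$ with error bounded by $O(\sqrt{\delta})\norm{f}_\infty$. A standard $\epsilon/3$ argument combining Steps 1--3 finishes the proof.

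\textbf{Main obstacle.} The subtlest point is the Riemann--Lebesgue treatment of the ``diagonal-resonant'' cosine $\cos(\Psi_n^+(x)-\Psi_n^-(y))$ arising from product-to-sum expansions, whose phase nearly vanishes along $\{x=y\}$. However the phase gradient $(\partial_x\Psi_n^+, -\partial_y\Psi_n^-)$ still has modulus bounded below by $cn$ uniformly on $B_\delta$, which is precisely what the two-dimensional lemma requires after integration by parts. The accompanying bookkeeping of slow phases such as $A_n(x)-A_{n-1}(x)$ in order to identify $G$ explicitly with $\Xi$ is where most of the computational work of Section \ref{sec:LInfinityProof} will lie.
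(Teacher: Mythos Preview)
Your outline is correct and would prove the Proposition, but the paper takes a less computational route in three places. First, for tightness (your Step~1) the paper avoids edge asymptotics entirely: since $\int_0^\infty n\psi_{\nn}^{(\alpha)}(nx)^2\,dx = 1$ by orthonormality and (by the bulk result alone) $\int_\delta^{4-\delta} n\psi_{\nn}^2 \to \int_\delta^{4-\delta} 2/\kappa$, the tail is automatically small because $\int_0^4 2/\kappa = 1$. Second, for the cross term $n\psi_n^{(\alpha)}(nx)\psi_{n-1}^{(\alpha)}(nx)$ the paper does not track the slow phase $A_n - A_{n-1}$ at all; instead it uses the derivative recurrence \eqref{eq:psiDerivativeRecurrence} to write $n\psi_n\psi_{n-1}$ as $\tfrac{2-x}{2}\,n\psi_n^2$ (up to $O(n^{-1})$) plus a term of the form $x\cdot n\psi_n\psi_n'$, which vanishes after integration by parts against smooth test functions (Lemma~\ref{lem:BulkSmoothCross}) and then extends to $L^\infty$ by mollification (Lemma~\ref{lem:CrossBulk}). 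This completely sidesteps the phase-difference bookkeeping you flag as the main obstacle. Third, the passage from one variable to two is done by iterating the one-variable limit via Egoroff's theorem (Lemmas~\ref{lem:SquareBulkError} and~\ref{lem:DoubleSquareBulk}) rather than a two-dimensional Riemann--Lebesgue argument; your approach is conceptually more direct but requires care because the phase corrections $b_n(x)$ in $A_{\nn}(x) = (2\nn+\alpha+1)\Theta(x)+b_n(x)$ are $n$-dependent, so one must first show they converge before applying RL to the $L^\infty$ integrand. Your route buys a unified two-dimensional treatment; the paper's buys simpler individual steps and, notably, requires no edge asymptotics whatsoever in the proof of this Proposition.
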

The rest of this section is devoted to proving Proposition \ref{prop:Double}. We will deal with $\Phi_n^{(\alpha)}(x,y)$ term by term. Namely we will work with the square term $\psi_{n-1}^{(\alpha)}(nx)^2 \psi_{n}^{(\alpha)}(ny)^2$ and show it converges, and then likewise the cross term $\psi_{n}^{(\alpha)}(nx)\psi_{n-1}^{(\alpha)}(nx)\psi_{n}^{(\alpha)}(ny)\psi_{n-1}^{(\alpha)}(ny)$. For both terms, we will first show that in the bulk, that is $f\in L^\infty([\delta, 4-\delta]^2)$, the convergence holds. Then we will extend it to $f\in L^\infty([0,\infty)^2)$ by showing that everything outside the bulk only has negligible contribution. The method of proof here was inspired by Kopel's preprint \cite{Kopel2015}.
\subsubsection{Square term} \label{sec:SquareTerm}
We start with the simplest case and slowly generalize. 
\begin{lemma} \label{lem:SquareBulk}For $f\in L^\infty([\delta, 4-\delta])$ for small $\delta > 0$, we have, 
  \begin{equation*}
    \lim_{n\to\infty} n\int_{\delta}^{4-\delta} f(x) \psi^{(\alpha)}_{\nn}(nx)^2 dx = \int_{\delta}^{4-\delta} \frac{2f(x)}{\kappa(x)} dx.
  \end{equation*}
  where we define,
    \begin{equation} \label{eq:kappaDefs}
     \kappa(x) := 2\pi\sqrt{4-(x-2)^2} = 2\pi\sqrt{4x-x^2}
    \end{equation}
\end{lemma}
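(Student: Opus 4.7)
The plan is to insert the bulk asymptotic \eqref{eq:PRBulkSquare} directly into the integral on $[\delta, 4-\delta]$, which cleanly splits the integrand into a non-oscillatory ``secular'' piece that yields the claimed limit and an oscillatory remainder that must be shown to vanish. Writing $\phi_n(x) := \arccos\sqrt{nx/(4\nn+2\alpha+2)}$, $g_n(x) := [2\pi\sqrt{x}\sin\phi_n(x)]^{-1}$, and $\Psi_n(x) := (2\nn+\alpha+1)(\sin 2\phi_n(x) - 2\phi_n(x)) + 3\pi/2$, the asymptotic produces
\[
  n\int_\delta^{4-\delta} f(x)\psi_{\nn}^{(\alpha)}(nx)^2\, dx = \int_\delta^{4-\delta} f(x)g_n(x)\,dx - \int_\delta^{4-\delta} f(x)g_n(x)\cos\Psi_n(x)\,dx + \T{Err}_n,
\]
where $\abs{\T{Err}_n}\le C n^{-1/2}\norm{f}_{L^1}$ since $\nn x \ge n\delta/2$ for large $n$ and $g_n$ is uniformly bounded on the domain.

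For the secular term, I would use uniform convergence: $\sin\phi_n(x) = \sqrt{1-nx/(4\nn+2\alpha+2)} \to \sqrt{(4-x)/4}$ uniformly on $[\delta,4-\delta]$, hence $g_n(x) \to 1/(\pi\sqrt{x(4-x)}) = 2/\kappa(x)$ uniformly. Since $f$ is bounded on a compact interval, the estimate $\abs{\int f(g_n - 2/\kappa)\,dx} \le (4-2\delta)\norm{f}_{L^\infty}\norm{g_n - 2/\kappa}_{L^\infty}$ immediately gives $\int f g_n\,dx \to \int_\delta^{4-\delta} 2f(x)/\kappa(x)\,dx$.

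For the oscillatory term I would proceed in two stages. First, for $f\in C^1([\delta,4-\delta])$, differentiating the identity $\cos\phi_n = \sqrt{nx/(4\nn+2\alpha+2)}$ gives $\Psi_n'(x) = 2(2\nn+\alpha+1)\sqrt{n/(4\nn+2\alpha+2)}\,\sin\phi_n(x)/\sqrt{x}$, which is bounded below by $cn$ on $[\delta,4-\delta]$ for large $n$; moreover $\Psi_n''$, $g_n$, and $g_n'$ are uniformly bounded there. A single integration by parts against $\cos\Psi_n$ then yields boundary and interior contributions both of size $O(n^{-1})$. Second, for general $f\in L^\infty$, I would approximate $f$ in $L^1([\delta,4-\delta])$ by $C^1$ functions $f^{(k)}$; the difference $\int (f-f^{(k)})g_n\cos\Psi_n\,dx$ is controlled uniformly in $n$ by $\norm{g_n}_{L^\infty}\norm{f-f^{(k)}}_{L^1}$, so a standard two-$\epsilon$ argument concludes.

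The main obstacle is handling the oscillatory term for merely $L^\infty$ test functions: because $\Psi_n$ depends on $n$ not only through the growing prefactor $2\nn+\alpha+1$ but also through the $n$-dependent function $\phi_n$, the classical Riemann--Lebesgue lemma does not apply directly to $f$. Carrying out the stationary-phase integration by parts at the level of smooth test functions, together with exploiting the uniform boundedness of $g_n$ to close the density argument, is the key technical step.
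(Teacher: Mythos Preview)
Your proposal is correct and follows the same overall strategy as the paper: insert the bulk asymptotic \eqref{eq:PRBulkSquare}, separate the secular and oscillatory pieces, and show the latter vanishes. The secular term is handled identically (uniform convergence of $g_n$ to $2/\kappa$).

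The two arguments diverge only in how they kill the oscillatory integral. The paper performs the change of variable $x\mapsto a(\phi)=\sin 2\phi-2\phi$ so that the phase becomes exactly $(2\nn+\alpha+1)a$, observes that the resulting amplitude equals a fixed function $g(a)$ up to $O(n^{-1})$, and then appeals to the Riemann--Lebesgue lemma. Your route---integration by parts for $C^1$ test functions using the lower bound $|\Psi_n'|\gtrsim n$, followed by an $L^1$-density argument to reach $L^\infty$ $f$---is the standard non-stationary-phase alternative. Your version is in fact slightly more careful about the residual $n$-dependence: after the paper's change of variables the test function is evaluated at $x_n(a)=\tfrac{4\nn+2\alpha+2}{n}\cos^2\phi(a)$, so one still needs continuity of dilations in $L^1$ (or an equivalent step) before Riemann--Lebesgue applies, a point the paper leaves implicit. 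Conversely, the paper's approach avoids having to check the uniform bounds on $g_n'$ and $\Psi_n''$ that your integration-by-parts step requires. Both arguments are short and standard; neither offers a substantive advantage over the other here.
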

\begin{proof} 
Let $\delta > 0$ be given. Starting with the asymptotic \eqref{eq:PRBulkSquare} and taking the integral against a test function $f(x) \in L^\infty([\delta, 4-\delta])$:
  \begin{equation} \label{eq:IntegralSquareBulk}
  \begin{split}
   n\int_{\delta}^{4-\delta} f(x) \psi_{\nn}^{(\alpha)}(nx)^2 dx & = (1-O(n^{-1}))\bigg[ (1+O(\nn^{-\frac{1}{2}}))\int_{\delta}^{4-\delta} \frac{f(x)}{2\pi \sqrt{x} \sin\phi} dx \\
    & \qquad \qquad - \int_{\delta}^{4-\delta} f(x) \frac{\sin\Big(\Big(2\nn + \alpha + 1\Big)(\sin2\phi - 2\phi)\Big)}{2\pi\sqrt{x} \sin \phi}dx \bigg].
  \end{split}
  \end{equation}
As $2\sqrt{x}\pi\sin\phi$ is bounded away from $0$ for $x\in [\delta, 4-\delta]$ the integrands on right hand side of \eqref{eq:IntegralSquareBulk} are all bounded and all the lower order terms disappear in the limit. The oscillatory term on the second line goes away as a consequence of the Riemann-Lebesgue Lemma. To see this let $a(\phi) = \sin 2\phi - 2\phi$. Since $x = \frac{4\nn+2\alpha + 2}{n}\cos^2\phi \in [\delta, 4-\delta]$, we see that $1-\cos 2\phi$ is bounded away from $0$. Thus $\abs{\frac{(4\nn+2\alpha+2) \cos\phi}{2n \pi\sqrt{x} (1-\cos 2\phi)}} = g(a(\phi)) + O(n^{-1})$ where $g$ does not depend on $n$ and hence:
  \begin{equation*}
  \begin{split}
  \lim_{n\to\infty} \int_{\delta}^{4-\delta} f(x) & \frac{\sin\Big(\Big(2\nn + \alpha + 1\Big)(\sin2\phi - 2\phi)\Big)}{2\pi\sqrt{x} \sin \phi}dx \\
    & = \lim_{n\to\infty} \int_{\delta}^{4-\delta} f(x) \frac{(4\nn+2\alpha+2) \cos\phi}{2 n \pi\sqrt{x} (1-\cos 2\phi)}\sin\Big(\Big(2\nn + \alpha + 1\Big)a(\phi)\Big) d[a(\phi)] \\
    & = 0.
  \end{split}
  \end{equation*}
As $x = 4\cos^2\phi + O(\nn^{-1})$, the $\sin \phi$ in the denominator of \eqref{eq:IntegralSquareBulk} becomes $\sqrt{1-x/4}$ in the limit $n\to\infty$. Hence, 
  \begin{equation*}
    \lim_{n\to\infty}n\int_{\delta}^{4-\delta} f(x) \psi_{\nn}^{(\alpha)}(nx)^2 dx = \int_{\delta}^{4-\delta} \frac{f(x)}{\pi \sqrt{4x-x^2}} dx.
  \end{equation*}
\end{proof}
As we wish to extend this result to functions $f(x,y)\in L^\infty([0,\infty)^2)$, we need an estimate on how uniform the convergence is. While the result is a fairly trivial application of Egoroff's theorem, it is important enough to warrant its own Lemma.
\begin{lemma} \label{lem:SquareBulkError}
Let $f(x, y)\in L^\infty([\delta, 4-\delta]^2)$ and $\epsilon > 0$ be given. Define $f_y(x) = f(x, y)$. Then there exists $N$ such that for all $n > N$ and $y$ except possibly for $y$ in some (fixed) set of measure-$\epsilon$:
  \begin{equation*}
    \abs*{n\int_{\delta}^{4-\delta} f_y(x) \psi^{(\alpha)}_{\nn}(nx)^2 dx - \int_{\delta}^{4-\delta} \frac{2f_y(x)}{\kappa(x)} dx} < \epsilon,
  \end{equation*}
 where $\kappa$ is defined \eqref{eq:kappaDefs}.
\end{lemma}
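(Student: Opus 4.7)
The plan is to derive this statement as a direct consequence of Egoroff's theorem applied to the pointwise convergence already established in Lemma~\ref{lem:SquareBulk}. Define
\[
g_n(y) := n \int_\delta^{4-\delta} f_y(x)\, \psi_{\nn}^{(\alpha)}(nx)^2 \, dx, \qquad g(y) := \int_\delta^{4-\delta} \frac{2 f_y(x)}{\kappa(x)} \, dx,
\]
viewed as functions of $y \in [\delta, 4-\delta]$. Since $f \in L^\infty([\delta,4-\delta]^2)$, for almost every $y$ the slice $f_y$ belongs to $L^\infty([\delta,4-\delta])$ with $\norm{f_y}_\infty \le \norm{f}_\infty$, and Lemma~\ref{lem:SquareBulk} applied to $f_y$ gives the pointwise convergence $g_n(y) \to g(y)$ for a.e.\ $y \in [\delta, 4-\delta]$.

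Before invoking Egoroff I would quickly verify that $g_n$ and $g$ are measurable in $y$. This is routine: $\psi_{\nn}^{(\alpha)}(nx)^2$ is continuous in $x$, $f$ is measurable on $[\delta,4-\delta]^2$, and the integrand of $g_n$ is uniformly bounded (say by $n \norm{f}_\infty \cdot \sup_{[\delta,4-\delta]} \psi_{\nn}^{(\alpha)}(nx)^2$) on a set of finite measure, so Fubini--Tonelli gives the measurability of $g_n$; the same reasoning works for $g$ since $1/\kappa(x)$ is bounded on $[\delta,4-\delta]$.

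With pointwise a.e.\ convergence $g_n \to g$ on the finite-measure set $[\delta, 4-\delta]$, Egoroff's theorem produces a measurable set $E_\epsilon \subseteq [\delta, 4-\delta]$ with $\abs{[\delta, 4-\delta] \setminus E_\epsilon} < \epsilon$ on which the convergence is uniform. Choosing $N$ so that $\abs{g_n(y) - g(y)} < \epsilon$ for every $n > N$ and every $y \in E_\epsilon$ concludes the proof. There is no substantive obstacle: the essential point is that Egoroff delivers a single fixed exceptional set that works for all large $n$, which is precisely the uniformity in $n$ demanded by the lemma statement.
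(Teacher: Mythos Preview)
Your proposal is correct and follows essentially the same approach as the paper: the paper's proof consists of a single sentence invoking Egoroff's theorem on the function $y \mapsto n\int_{\delta}^{4-\delta} f_y(x)\,\psi^{(\alpha)}_{\nn}(nx)^2\,dx$, and your write-up simply fills in the routine details (measurability, pointwise convergence via Lemma~\ref{lem:SquareBulk}) that the paper leaves implicit.
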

\begin{proof}
This follows from Egoroff's theorem applied to the function 
\[y \mapsto n\int_{\delta}^{4-\delta} f_y(x) \psi^{(\alpha)}_{\nn}(nx)^2 dx.\]
\end{proof}
This allows us to prove the bulk convergence result for the square term of $\Phi_n^{(\alpha)}(x,y)$ as the convergence in Lemma \ref{lem:SquareBulk} is uniform nearly everywhere. 
\begin{lemma} \label{lem:DoubleSquareBulk}
  Let $f(x,y)\in L^\infty([\delta, 4-\delta]^2)$. Then for small $\delta > 0$ we have:
  \begin{equation*}
  \begin{split}
    \lim_{n\to\infty} \int_\delta^{4-\delta} \int_\delta^{4-\delta} n(n+\alpha)f(x, y)\psi_n^{(\alpha)}(nx)^2 & \psi_{n-1}^{(\alpha)}(ny)^2 dxdy = \int_\delta^{4-\delta}\int_\delta^{4-\delta} \frac{4f(x, y)}{\kappa(x)\kappa(y)} dxdy,
  \end{split}
  \end{equation*}
  where $\kappa$ is defined \eqref{eq:kappaDefs}.
\end{lemma}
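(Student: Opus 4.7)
The strategy is to iterate the single-variable convergence of Lemma \ref{lem:SquareBulk} through Fubini's theorem, using the Egoroff-type almost-uniform convergence of Lemma \ref{lem:SquareBulkError} to absorb the fact that the inner integral depends on both the auxiliary variable $y$ and the index $n$.

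First I would apply Fubini to rewrite the target double integral as
\[J_n := \int_\delta^{4-\delta}\!\int_\delta^{4-\delta} n(n+\alpha)\, f(x,y)\, \psi_n^{(\alpha)}(nx)^2\, \psi_{n-1}^{(\alpha)}(ny)^2\, dx\, dy = \int_\delta^{4-\delta} (n+\alpha)\,\psi_{n-1}^{(\alpha)}(ny)^2\, I_n(y)\, dy,\]
where $I_n(y) := n\int_\delta^{4-\delta} f(x,y)\, \psi_n^{(\alpha)}(nx)^2\, dx$. By Lemma \ref{lem:SquareBulk} applied with the fixed-$y$ slice $f(\cdot, y) \in L^\infty([\delta, 4-\delta])$, we have $I_n(y) \to I(y) := \int_\delta^{4-\delta} \frac{2 f(x,y)}{\kappa(x)}\, dx$ pointwise in $y$. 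I would then split
\[J_n = (n+\alpha)\!\int_\delta^{4-\delta}\!\psi_{n-1}^{(\alpha)}(ny)^2\, I(y)\, dy + (n+\alpha)\!\int_\delta^{4-\delta}\!\psi_{n-1}^{(\alpha)}(ny)^2\,(I_n(y) - I(y))\, dy.\]
Since $\kappa \ge c > 0$ on $[\delta, 4-\delta]$, the limit function $I$ lies in $L^\infty([\delta, 4-\delta])$, and a second application of Lemma \ref{lem:SquareBulk} (with $\nn = n-1$; the $\alpha/n$ prefactor is absorbed in the limit) shows that the first term tends to $\int_\delta^{4-\delta} I(y)\, \frac{2}{\kappa(y)}\, dy = \int_\delta^{4-\delta}\!\int_\delta^{4-\delta} \frac{4 f(x,y)}{\kappa(x)\kappa(y)}\, dx\, dy$, which is the stated limit.

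It then remains to show the error term is $o(1)$. Given $\epsilon > 0$, Lemma \ref{lem:SquareBulkError} produces a fixed set $B_\epsilon \subset [\delta, 4-\delta]$ of measure at most $\epsilon$ such that $|I_n(y) - I(y)| < \epsilon$ for all $y \notin B_\epsilon$ and all sufficiently large $n$. Combining this with the uniform bound $(n+\alpha)\int_\delta^{4-\delta} \psi_{n-1}^{(\alpha)}(ny)^2\, dy \le C$ (which follows from a change of variables and the $L^2$-normalization of $\psi_{n-1}^{(\alpha)}$) makes the contribution from the good set $O(\epsilon)$. On $B_\epsilon$ the crude bound $|I_n(y)|, |I(y)| \le C\|f\|_\infty$ reduces matters to controlling $(n+\alpha) \int_{B_\epsilon} \psi_{n-1}^{(\alpha)}(ny)^2\, dy$, which I would estimate by invoking Lemma \ref{lem:SquareBulk} once more, now with the bounded test function $\mathbf{1}_{B_\epsilon}$; its limit $\int_{B_\epsilon} \frac{2}{\kappa(y)}\, dy \le C\epsilon$ keeps that piece $O(\epsilon)$ as well, and letting $\epsilon \to 0$ concludes the argument. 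The main obstacle is precisely the $n$-dependence of $I_n$, which prevents a direct appeal to Lemma \ref{lem:SquareBulk} with $I_n$ playing the role of the outer test function; it is the Egoroff-type control of Lemma \ref{lem:SquareBulkError}, together with the $L^\infty$ bound ensured by $\kappa \ge c$ on the bulk, that makes the reduction to the fixed limit function $I$ legitimate.
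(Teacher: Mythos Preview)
Your proposal is correct and follows essentially the same approach as the paper: both arguments split the outer integral using the Egoroff-type almost-uniform convergence of Lemma~\ref{lem:SquareBulkError}, control the good set via the $L^1$-normalization of $\psi_{n-1}^{(\alpha)}$, and handle the small bad set by applying Lemma~\ref{lem:SquareBulk} to its indicator. The only cosmetic difference is that the paper packages the two $\epsilon$-bounds into a single minimum when invoking Lemma~\ref{lem:SquareBulkError}, whereas you absorb the constants at the end.
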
 
\begin{proof}
Let $f_y(x) = f(x, y)$. From Lemma \ref{lem:SquareBulkError}, there exists $N$ so that for all $n > N$ and all $y$ except possibly for some set $\Sigma_\epsilon$ of measure $\frac{\epsilon \pi\delta}{8\norm{f}_\infty}$ the following holds:
  \[\abs*{n\int_\delta^{4-\delta} f_y(x) \psi_n^{(\alpha)}(nx)^2 dx - \int_\delta^{4-\delta} \frac{2f_y(x)}{\kappa(x)} dx} < \min\Big(\frac{\epsilon}{4\norm{(n+\alpha)\psi_n^{(\alpha)}(ny)^2}_1}, \frac{\epsilon \pi\delta}{8\norm{f}_\infty}\Big).\]
We denote $\Sigma_\epsilon^c = [\delta, 4-\delta]\setminus \Sigma_\epsilon$. Then we get for $n > N$,
  \begin{equation}\label{eq:DoubleNiceBulkSquare}
  \begin{split}
    & \ \ \ \ \abs*{\int_{\Sigma_\epsilon^c} (n+\alpha)\psi_{n-1}^{(\alpha)}(ny)^2 \int_\delta^{4-\delta} \Big(nf(x, y)\psi_n^{(\alpha)}(nx)^2  -\frac{2f(x, y)}{\kappa(x)}\Big) dxdy} \\
    & < \int_{\Sigma_\epsilon^c} \frac{\epsilon (n+\alpha)\psi_{n-1}^{(\alpha)}(ny)^2 }{4\norm{(n+\alpha)\psi_n^{(\alpha)}(ny)^2}_1} dy = \frac{\epsilon}{4}.
  \end{split}
  \end{equation}
At the same time, since $f$ is $L^\infty$, H\"{o}lder's inequality with the orthogonality of $\psi_n^{(\alpha)}$ \eqref{info:weights} tells us that for almost every $y$, 
  \begin{equation*}
  \begin{split}
    \abs*{\int_\delta^{4-\delta} nf_y(x) \psi_n^{(\alpha)}(nx)^2 -\frac{2f_y(x)}{\kappa(x)} dx} & \le \norm{f}_\infty \int_\delta^{4-\delta} \Big(n\psi_n^{(\alpha)}(nx)^2 + \frac{2}{\kappa(x)}\Big) dx \\
      & \le 2\norm{f}_\infty.
  \end{split}
  \end{equation*}
Hence Lemma \ref{lem:SquareBulk} (applied for the inequality on the third line) tells us for $n > N$,
  \begin{equation}\label{eq:DoubleMeanBulkSquare}
  \begin{split}
    & \ \ \ \ \abs*{\int_{\Sigma_\epsilon} (n+\alpha)\psi_{n-1}^{(\alpha)}(4ny)^2 \int_\delta^{4-\delta}\Big(nf(x, y)\psi_n^{(\alpha)}(nx)^2 -\frac{2f(x, y)}{\kappa(x)}\Big) dxdy} \\
      & \le 2\norm{f}_\infty (n+\alpha)\int_{\Sigma_\epsilon}\psi_{n-1}^{(\alpha)}(ny)^2 dx \\
      & \le 2\norm{f}_\infty \abs*{\int_{\Sigma_\epsilon}\frac{2}{\kappa(y)}dx + \frac{\epsilon}{8\norm{f}_\infty}} \\
      & \le 2\norm{f}_\infty \abs*{\frac{\epsilon \pi\delta}{8\norm{f}_\infty}\frac{1}{\pi\delta} + \frac{\epsilon}{8\norm{f}_\infty}} = \frac{\epsilon}{4}.
  \end{split}
  \end{equation}
Adding \eqref{eq:DoubleNiceBulkSquare} and \eqref{eq:DoubleMeanBulkSquare}, we get that given $\epsilon > 0$ for $n > N$,
  \begin{equation} \label{eq:01DoubleBulkSquare1}
  \begin{split}
    & \abs*{\int_\delta^{4-\delta}(n+\alpha)\psi_n^{(\alpha)}(4ny)^2\int_\delta^{4-\delta} nf(x, y)\psi_n^{(\alpha)}(nx)^2 - \frac{2f(x, y)}{\kappa(x)} dxdy} < \frac{\epsilon}{2}.
  \end{split} 
  \end{equation}
As $\int_\delta^{4-\delta} \frac{2f(x, y)}{\kappa(x)} dx$ is uniformly bounded for almost every $y$ and hence $L^\infty$, we can apply Lemma \ref{lem:SquareBulk} to see that for $n > N$:
  \begin{equation}\label{eq:01DoubleBulkSquare2}
    \abs*{\int_\delta^{4-\delta}(n+\alpha)\psi_n^{(\alpha)}(ny)^2\int_\delta^{4-\delta} \frac{2f(x, y)}{\kappa(x)} dxdy - \int_\delta^{4-\delta}\int_\delta^{4-\delta} \frac{4f(x, y)}{\kappa(x)\kappa(y)} dxdy} < \frac{\epsilon}{2}.
  \end{equation}
Adding \eqref{eq:01DoubleBulkSquare1} and \eqref{eq:01DoubleBulkSquare2}, we see that there exists $N$ so that for $n > N$:
  \begin{equation*}
    \bigg\lvert\int_\delta^{4-\delta} \int_\delta^{4-\delta}n(n+\alpha)f(x, y)\psi_n^{(\alpha)}(nx)^2 \psi_{n-1}^{(\alpha)}(ny)^2 dxdy  - \int_\delta^{4-\delta}\int_\delta^{4-\delta} \frac{4f(x, y)}{\kappa(x)\kappa(y)} dxdy\bigg\rvert < \epsilon.
  \end{equation*}
\end{proof}

We illustrate the key idea to extend the results to $f\in L^\infty([0,\infty)^2)$. Notice that
  \[\int_0^4 \frac{2}{\kappa(x)}dx = \int_0^4 \frac{2}{2\pi \sqrt{4-(x-2)^2}}dx = 1.\]
In particular, because $\psi_{\nn}^{(\alpha)}(x)$ are orthonormal by construction \eqref{info:weights}, $n\int_0^\infty \psi_{\nn}^{(\alpha)}(nx)^2 dx = 1$ for all $n$. So given $\nu > 0$, by the continuity of integrals we can find sufficiently small $\delta > 0$ such that:
  \begin{equation*} \label{eq:SquareEdgeBound}
  \begin{split}
   \lim_{n\to\infty} n\int_{[0, \infty)\setminus [\delta, 4-\delta]} \psi_{\nn}^{(\alpha)}(nx)^2 dx & = \lim_{n\to\infty} \bigg[ n\int_0^\infty \psi_{\nn}^{(\alpha)}(nx)^2 dx - n\int_{\delta}^{4-\delta} \psi_{\nn}^{(\alpha)}(nx)^2 dx\bigg] \\
    & = 1 - \int_\delta^{4-\delta} \frac{2}{\kappa(x)} dx  \\
    & \le \nu,
  \end{split}
  \end{equation*}
where we applied Lemma \ref{lem:SquareBulk} for the second equality. This idea applies for the cross term of $\Phi_n^{(\alpha)}(x, y)$ as well. We will write the details out for the square term.

\begin{lemma} \label{lem:DoubleSquare}
For $f(x,y)\in L^\infty([0,\infty))^2$, we have, 
  \begin{equation*}
    \lim_{n\to\infty} \int_0^\infty \int_0^\infty n(n+\alpha)f(x, y)\psi_n^{(\alpha)}(nx)^2 \psi_{n-1}^{(\alpha)}(ny)^2 dxdy = \int_0^4 \int_0^4 \frac{4f(x, y)}{\kappa(x)\kappa(y)} dxdy,
  \end{equation*} 
where $\kappa(x)$ is defined \eqref{eq:kappaDefs}.
\end{lemma}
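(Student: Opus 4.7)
The plan is to reduce to the bulk case already handled in Lemma~\ref{lem:DoubleSquareBulk} by showing that the contribution from the complement of $[\delta, 4-\delta]^2$ is uniformly small in $n$, once $\delta$ is chosen sufficiently small. The crucial ingredient is that $n\int_0^\infty \psi_{\tilde n}^{(\alpha)}(nx)^2\,dx = 1$ for every $n$ by orthonormality~\eqref{info:weights}, while by Lemma~\ref{lem:SquareBulk} applied to $f\equiv 1$,
\[
\lim_{n\to\infty} n\int_\delta^{4-\delta} \psi_{\tilde n}^{(\alpha)}(nx)^2\,dx \;=\; \int_\delta^{4-\delta} \frac{2}{\kappa(x)}\,dx,
\]
and the right-hand side converges to $\int_0^4 \frac{2}{\kappa(x)}\,dx = 1$ as $\delta\to 0$. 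Consequently, given any $\nu>0$, we can choose $\delta>0$ and $N$ so that for all $n>N$,
\[
n\int_{[0,\infty)\setminus[\delta,4-\delta]} \psi_{\tilde n}^{(\alpha)}(nx)^2\,dx \;<\; \nu,
\]
and likewise for the index $n-1$. This is precisely the tail estimate sketched immediately above the statement of the lemma.

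\textbf{Main steps.} Given $\epsilon>0$, first choose $\nu = \epsilon/(4\|f\|_\infty + 1)$ and then $\delta,N$ as above. Next, decompose
\[
\int_0^\infty\!\!\int_0^\infty n(n+\alpha) f(x,y)\psi_n^{(\alpha)}(nx)^2 \psi_{n-1}^{(\alpha)}(ny)^2\,dx\,dy
\]
according to whether $(x,y)$ lies in $[\delta,4-\delta]^2$ or in its complement. On $[\delta,4-\delta]^2$, apply Lemma~\ref{lem:DoubleSquareBulk} directly to conclude convergence to $\int_\delta^{4-\delta}\!\int_\delta^{4-\delta} 4f(x,y)/(\kappa(x)\kappa(y))\,dx\,dy$. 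On the complement, decompose further as $\{x\notin[\delta,4-\delta]\}\times[0,\infty) \cup [0,\infty)\times\{y\notin[\delta,4-\delta]\}$; by Fubini, $|f|\le\|f\|_\infty$, the orthonormality of the normalized Laguerre polynomials, and the tail estimate above, each of these pieces is bounded by $\|f\|_\infty\cdot\nu$. On the limiting side, because $\int_0^4 \frac{2}{\kappa(x)}\,dx = 1$, shrinking $\delta$ further if necessary also makes
\[
\Big|\int_0^4\!\!\int_0^4 \frac{4f(x,y)}{\kappa(x)\kappa(y)}\,dx\,dy - \int_\delta^{4-\delta}\!\!\int_\delta^{4-\delta} \frac{4f(x,y)}{\kappa(x)\kappa(y)}\,dx\,dy\Big|
\]
arbitrarily small, again by a trivial tail estimate and the boundedness of $f$. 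Combining these bounds yields the claim.

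\textbf{Anticipated obstacle.} There is no substantive analytic obstacle beyond what is already in Lemma~\ref{lem:DoubleSquareBulk}; the only thing to be careful about is the order in which the parameters $\delta$, $\nu$, and $N$ are chosen, and that the tail mass estimate is applied uniformly in $n > N$ rather than pointwise in $n$. The exponential decay of $\psi_{\tilde n}^{(\alpha)}$ beyond the soft edge could be invoked to give a quantitative version of the tail bound, but this is unnecessary here: orthonormality together with Lemma~\ref{lem:SquareBulk} suffice to make the tail mass go to zero uniformly, which is all that is needed for the $L^\infty$ extension.
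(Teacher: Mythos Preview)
Your proposal is correct and follows essentially the same approach as the paper: reduce to the bulk case via Lemma~\ref{lem:DoubleSquareBulk}, control the complement using that the total mass of $n\psi_{\tilde n}^{(\alpha)}(nx)^2$ is $1$ by orthonormality while the bulk mass converges to $\int_\delta^{4-\delta} 2/\kappa = 1 - o_\delta(1)$, and then use $\|f\|_\infty$ to bound the tail contribution. The only cosmetic difference is that the paper obtains the tail bound in one step by applying Lemma~\ref{lem:DoubleSquareBulk} to the constant function $1$ on the product space (yielding \eqref{eq:DoubleEdgeBound}), whereas you factor the complement as a union of two strips and invoke the one-dimensional tail estimate twice; both routes are equivalent and equally rigorous.
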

\begin{proof}
Notice that
  \[\int_0^4\int_0^4 \frac{4}{\kappa(x)\kappa(y)}dx = 1.\]
This allows us to show the contribution outside the bulk is small. Let $f(x, y)\in L^\infty([0,\infty)^2)$ and $\epsilon > 0$ be given. Because $\psi_n^{(\alpha)}(x)$ are orthonormal \eqref{info:weights},
  \begin{align*}
    \lim_{n\to\infty}\int_0^\infty\int_0^\infty  n(n+\alpha)\psi_n^{(\alpha)}(nx)^2\psi_{n-1}^{(\alpha)}(ny)^2 dxdy  = 1.
  \end{align*}
Let $\Sigma_\delta = [0, \infty)^2\setminus [\delta, 4-\delta]^2$. Then applying Lemma \ref{lem:DoubleSquareBulk} and using the continuity of integrals, we can choose $\delta > 0$ to be sufficiently small so that:
  \begin{equation} \label{eq:DoubleEdgeBound}
   \lim_{n\to\infty} \iint_{\Sigma_\delta} n(n+\alpha)\psi_n^{(\alpha)}(nx)^2\psi_{n-1}^{(\alpha)}(ny)^2 dxdy = 1 - \int_\delta^{4-\delta} \frac{4}{\kappa(x)\kappa(y)} dx \le \frac{\epsilon}{6\norm{f}_\infty+1}.
  \end{equation}
Hence we see that,
  \begin{equation*}
  \begin{split}
    & \ \ \ \ \abs*{\int_0^4\int_0^4 \frac{4f(x,y)}{\kappa(x)\kappa(y)} dxdy - \int_0^\infty\int_0^\infty n(n+\alpha)f(x,y) \psi_n^{(\alpha)}(nx)^2\psi_{n-1}^{(\alpha)}(ny)^2 dxdy} \\
      &  \le \abs*{\int_{0}^4\int_0^4 \frac{4f(x,y)}{\kappa(x)\kappa(y)} dxdy - \int_{\delta}^{4-\delta} \int_{\delta}^{4-\delta} \frac{4f(x,y)}{\kappa(x)\kappa(y)} dxdy} \\
    & \qquad + \abs*{\int_{\delta}^{4-\delta}\int_{\delta}^{4-\delta} \frac{4f(x,y)}{\kappa(x)\kappa(y)} dxdy - \int_{\delta}^{4-\delta}\int_{\delta}^{4-\delta} n(n+\alpha) f(x,y) \psi_n^{(\alpha)}(nx)^2 \psi_{n-1}^{(\alpha)}(ny)^2dxdy}  \\
    & \qquad + \abs*{\iint_{\Sigma_\delta} n(n+\alpha) f(x,y) \psi_n^{(\alpha)}(nx)^2 \psi_{n-1}^{(\alpha)}(ny)^2dxdy}.
  \end{split}
  \end{equation*}
Because $f$ is $L^\infty$, the first term on the right hand side is less than $\frac{\epsilon}{3}$ for sufficiently small $\delta > 0$. H\"{o}lder's inequality with \eqref{eq:DoubleEdgeBound} tells us for sufficiently large $n > N$, with small $\delta$, the third term is also bounded by $\frac{\epsilon}{3}$. After fixing $\delta$, it follows from Lemma \ref{lem:DoubleSquareBulk} that there exists $N$ so that $n > N$ implies the second term is less than $\frac{\epsilon}{3}$. Hence there exists $N$ such that $n > N$ implies:
  \begin{equation*}
    \abs*{\int_0^4\int_0^4 \frac{4f(x,y)}{\kappa(x)\kappa(y)} dxdy - \int_0^\infty\int_0^\infty n(n+\alpha)f(x,y) \psi_n^{(\alpha)}(nx)^2\psi_{n-1}^{(\alpha)}(ny)^2 dxdy} < \epsilon.
  \end{equation*}
\end{proof}

\subsubsection{Cross term}
The cross term does not behave quite as nicely with our asymptotics. However, we wish to do something similar to how we showed convergence for the square term. We will show the bulk case for the cross term $\psi_n^{(\alpha)}\psi_{n-1}^{(\alpha)}$ by first proving it for smooth functions and then approximating. We use the recurrence relations for Laguerre polynomials to express the cross term as the sum of the square term with some error, which disappears in the limit after integration by parts.
\begin{lemma} \label{lem:BulkSmoothCross} For compactly supported $f\in C^\infty((\delta,4-\delta))$ for small $\delta > 0$,
  \[\lim_{n\to\infty} n\int_\delta^{4-\delta} f(x) \psi_n^{(\alpha)}(nx)\psi_{n-1}^{(\alpha)}(nx) = \int_\delta^{4-\delta} \frac{(2-x)f(x)}{\kappa(x)} dx.\]
where $\kappa(x)$ is defined \eqref{eq:kappaDefs}.
\end{lemma}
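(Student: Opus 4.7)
The plan is to reduce the cross-term integral to a square-term integral (to which Lemma \ref{lem:SquareBulk} applies) by means of an exact differential identity for the normalized Laguerre polynomials, followed by a single integration by parts that is clean because $f$ is compactly supported in the open interval $(\delta, 4-\delta)$.

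First I would derive the key identity. Starting from the classical relation $y(L_n^{(\alpha)})'(y) = n L_n^{(\alpha)}(y) - (n+\alpha) L_{n-1}^{(\alpha)}(y)$ and the explicit definition \eqref{info:weights}, a direct calculation gives
\[ y\,\psi_n^{(\alpha)\prime}(y) = \frac{2n+\alpha-y}{2}\,\psi_n^{(\alpha)}(y) - \sqrt{n(n+\alpha)}\,\psi_{n-1}^{(\alpha)}(y). \]
Multiplying by $2\psi_n^{(\alpha)}(y)$, observing that $2y\psi_n^{(\alpha)}(y)\psi_n^{(\alpha)\prime}(y) = y\frac{d}{dy}[\psi_n^{(\alpha)}(y)^2]$, and substituting $y=nx$, one obtains
\[ 2\sqrt{n(n+\alpha)}\,\psi_n^{(\alpha)}(nx)\psi_{n-1}^{(\alpha)}(nx) = (2n+\alpha-nx)\psi_n^{(\alpha)}(nx)^2 - x\frac{d}{dx}\!\left[\psi_n^{(\alpha)}(nx)^2\right]. \]

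Next I would multiply by $f(x)$ and integrate over $[\delta,4-\delta]$. Because $f$ is smooth and compactly supported in the open interval, integration by parts in the derivative term produces no boundary contribution, yielding
\[ 2\sqrt{n(n+\alpha)} \int f(x)\,\psi_n^{(\alpha)}(nx)\psi_{n-1}^{(\alpha)}(nx)\,dx = \int \!\left[n(2-x)f(x) + (\alpha+1)f(x) + xf'(x)\right] \psi_n^{(\alpha)}(nx)^2\,dx. \]
Dividing by $2\sqrt{n(n+\alpha)}/n = 1 + O(n^{-1})$ and rearranging, $n\int f\,\psi_n^{(\alpha)}(nx)\psi_{n-1}^{(\alpha)}(nx)\,dx$ equals $\frac{1+O(n^{-1})}{2}$ times
\[ \int (2-x)f(x)\, n\psi_n^{(\alpha)}(nx)^2\,dx + \frac{1}{n}\int \!\left[(\alpha+1)f(x) + xf'(x)\right] n\psi_n^{(\alpha)}(nx)^2\,dx. \]
By Lemma \ref{lem:SquareBulk} applied to the $L^\infty$ test function $(2-x)f(x)$, the first integral converges to $2\int_\delta^{4-\delta}(2-x)f(x)/\kappa(x)\,dx$; the same lemma applied to $(\alpha+1)f+xf'$ shows the second integral is bounded, so its $1/n$ prefactor kills it. The $\frac{1}{2}$ then produces the claimed limit $\int_\delta^{4-\delta}(2-x)f(x)/\kappa(x)\,dx$.

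I do not anticipate a substantial obstacle. The only delicate point is choosing the right algebraic manipulation: attacking $\psi_n^{(\alpha)}(nx)\psi_{n-1}^{(\alpha)}(nx)$ directly with the bulk asymptotic \eqref{eq:PRBulkSquare} forces one to handle both a rapidly oscillating sum-of-phases contribution (which would go away by Riemann--Lebesgue) and a slowly varying difference-of-phases contribution whose coefficient is not immediately transparent. The identity above sidesteps this by reducing everything to integrals of $\psi_n^{(\alpha)}(nx)^2$, after which the oscillatory analysis has already been packaged into Lemma \ref{lem:SquareBulk}; the compact support of $f$ removes the boundary term that would otherwise obstruct the single integration by parts.
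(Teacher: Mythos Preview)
Your proposal is correct and follows essentially the same route as the paper: both use the differential identity $y\psi_n^{(\alpha)\prime}(y)=\tfrac{2n+\alpha-y}{2}\psi_n^{(\alpha)}(y)-\sqrt{n(n+\alpha)}\psi_{n-1}^{(\alpha)}(y)$ to convert the cross term into a square term plus a derivative, integrate by parts against the compactly supported $f$ to make the derivative contribution lower order, and then invoke Lemma~\ref{lem:SquareBulk}. Your write-up is in fact slightly tidier than the paper's in that you collect the post-integration-by-parts expression into a single formula with the explicit $(\alpha+1)f+xf'$ correction before taking the limit.
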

\begin{proof}
Let $\delta > 0$ be given. From the recurrence relation for Laguerre polynomials (\cite{Szego1975}, 5.1.14), we see the recurrence relation for $\psi_n^{(\alpha)}$ is given by,
  \begin{equation} \label{eq:psiDerivativeRecurrence}
    \frac{d}{dx}\psi_n^{(\alpha)}(x) = \frac{2n+\alpha - x}{2x} \psi_n^\alpha(x) - \frac{\sqrt{n(n+\alpha)}}{x}\psi_{n-1}^{(\alpha)}(x),
  \end{equation}
From the above recurrence relation we get the equality:
  \begin{equation} \label{eq:crossSimplify}
  \begin{split}
   n\psi_n^{(\alpha)}(nx)\psi_{n-1}^{(\alpha)}(nx) & = n\psi_n^{(\alpha)}(nx)\Big[\frac{2n - nx +\alpha}{2\sqrt{n(n+\alpha)}}\psi_n^{(\alpha)}(nx) - \frac{nx}{\sqrt{n(n+\alpha)}}\psi_n^{(\alpha)\prime}(nx)\Big].
  \end{split}
  \end{equation}
Integrating $\psi_n^{(\alpha)}(nx)\psi_n^{(\alpha)\prime}(nx)$ against a compactly supported smooth test function $f(x)\in C^\infty((\delta, 4-\delta))$ and using integration by parts:
  \begin{equation} \label{eq:crossInt}
   n\int \frac{nxf(x)}{\sqrt{n(n+\alpha)}}\psi_n^{(\alpha)}(nx)\psi_n^{(\alpha)\prime}(nx)dx = - \frac{n}{2}\int \Big(\frac{f(x)\psi_n^{(\alpha)}(nx)^2}{\sqrt{n(n+\alpha)}} + \frac{xf'(x)\psi_n^{(\alpha)}(nx)^2}{\sqrt{n(n+\alpha)}}\Big)dx.
  \end{equation}
As the derivative $f'$ is bounded in $[\delta, 4-\delta]$, in the limit $n\to\infty$, the right hand side of equation \eqref{eq:crossInt} vanishes because of Lemma \ref{lem:SquareBulk}. So applying Lemma \ref{lem:SquareBulk} to \eqref{eq:crossSimplify}, we get the desired conclusion:
  \begin{equation*}
  \begin{split}
    \lim_{n\to\infty} n\int_\delta^{4-\delta} f(x) \psi_n^{(\alpha)}(nx)\psi_{n-1}^{(\alpha)}(nx) dx
      & = \int_\delta^{4-\delta} \frac{(2-x)f(x)}{\kappa(x)} dx.
  \end{split}
  \end{equation*}
\end{proof}
We extend the regularity condition of the above lemma to $L^\infty$ by approximating $L^p$ functions with compact support by smooth mollifications. Note that this works only because we are on a domain with finite measure. 
\begin{lemma} \label{lem:CrossBulk}For $f\in L^\infty([\delta, 4-\delta])$ for small $\delta > 0$,
  \begin{equation*} \lim_{n\to\infty} n\int_{\delta}^{4-\delta} f(x)\psi_n^{(\alpha)}(nx)\psi_{n-1}^{(\alpha)}(nx) dx = \int_{\delta}^{4-\delta} \frac{(2-x)f(x)}{\kappa(x)} dx.
  \end{equation*}
where $\kappa(x)$ is defined \eqref{eq:kappaDefs}.
\end{lemma}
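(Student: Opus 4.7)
The plan is to approximate $f \in L^\infty([\delta, 4-\delta])$ by smooth compactly supported functions, apply Lemma \ref{lem:BulkSmoothCross} to the approximants, and control the error via a uniform pointwise bound on $n\psi_n^{(\alpha)}(nx)\psi_{n-1}^{(\alpha)}(nx)$. This bound is what makes an $L^1$-density argument work and is the analytical heart of the proof.

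The first step would be to extract from the bulk asymptotic \eqref{eq:PRBulkSquare} a uniform estimate
\[
n\psi_{\nn}^{(\alpha)}(nx)^2 \le M \qquad \text{for all } x \in [\delta, 4-\delta],\ n \ge N.
\]
This holds because for $x \in [\delta, 4-\delta]$ the parameter $\phi$ (defined by $nx = (4\nn+2\alpha+2)\cos^2\phi$) is eventually confined to a compact subinterval of $(0,\pi/2)$, so the prefactor $(2\pi\sqrt{x}\sin\phi)^{-1}$ is bounded and the oscillatory bracket is bounded by a constant. Combined with $2|ab|\le a^2+b^2$, we obtain
\[
n\bigl|\psi_n^{(\alpha)}(nx)\psi_{n-1}^{(\alpha)}(nx)\bigr| \le M \qquad \text{on } [\delta, 4-\delta].
\]

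Next, given $\epsilon > 0$, I would approximate $f$ by $f_k \in C_c^\infty((\delta, 4-\delta))$ with $\|f_k\|_\infty \le \|f\|_\infty$ and $\|f-f_k\|_{L^1} \to 0$. This is the standard truncate-and-mollify construction: restrict $f$ to $[\delta+1/k, 4-\delta-1/k]$, extend by zero, and convolve with a smooth mollifier of sufficiently small radius. Note $(2-x)/\kappa(x)$ is bounded on $[\delta,4-\delta]$ since $\kappa$ is bounded below there. Let
\[
M_1 := \max\Bigl(M,\ \bigl\|(2-x)/\kappa(x)\bigr\|_{L^\infty([\delta,4-\delta])}\Bigr),
\]
and choose $k$ so that $\|f-f_k\|_{L^1}<\epsilon/(3M_1)$.

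Finally, I would write the target difference as a telescoping sum $I_1+I_2+I_3$ where
\begin{align*}
I_1 &= \left| n\!\int_\delta^{4-\delta}\!(f-f_k)(x)\,\psi_n^{(\alpha)}(nx)\psi_{n-1}^{(\alpha)}(nx)\, dx \right|, \\
I_2 &= \left| n\!\int_\delta^{4-\delta}\! f_k(x)\,\psi_n^{(\alpha)}(nx)\psi_{n-1}^{(\alpha)}(nx)\, dx - \int_\delta^{4-\delta}\!\frac{(2-x)f_k(x)}{\kappa(x)}\, dx \right|, \\
I_3 &= \left| \int_\delta^{4-\delta}\!\frac{(2-x)(f-f_k)(x)}{\kappa(x)}\, dx \right|.
\end{align*}
The pointwise bound yields $I_1 \le M\|f-f_k\|_{L^1} < \epsilon/3$, Lemma \ref{lem:BulkSmoothCross} applied to the fixed smooth $f_k$ gives $I_2 < \epsilon/3$ for $n$ large, and the $L^\infty$ bound on $(2-x)/\kappa(x)$ gives $I_3 < \epsilon/3$. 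Summing yields the conclusion. The main obstacle is the uniform pointwise bound on $n\psi_n^{(\alpha)}(nx)^2$; once this is in hand, the rest is a routine density argument enabled by the finiteness of the bulk $[\delta,4-\delta]$, as the author's remark about the "finite measure" foreshadows.
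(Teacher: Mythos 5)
Your proof is correct and takes essentially the same route as the paper: mollify $f$ to smooth compactly supported approximants, establish a uniform bound on $n\psi_{\nn}^{(\alpha)}(nx)^2$ from the bulk asymptotic \eqref{eq:PRBulkSquare} (with $2|ab|\le a^2+b^2$ to handle the cross term), apply Lemma \ref{lem:BulkSmoothCross} to the approximant, and close with the same three-term triangle inequality. The only cosmetic difference is that the paper lets the mollified functions live on the wider interval $[\delta/2, 4-\delta/2]$ and applies Lemma \ref{lem:BulkSmoothCross} there, whereas you truncate inward so the approximants stay supported in $(\delta, 4-\delta)$; both are fine.
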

\begin{proof}
Let $\epsilon > 0$ and $\delta > 0$ be given. Note that because $[0,4]$ has finite measure, $L^\infty([0, 4])\subset L^p([0, 4])$ for all $1\le p\le\infty$. So by approximating using mollifiers, we can find a sequence of functions that converges to $f$ in $L^p$ for all $1\le p<\infty$. 

In particular, take a sequence of smooth functions $f_m$ that converges to $f$ as $m\to\infty$ in $L^p$, where $1 \le p < \infty$. We can require that $f_m$ are all supported in $[\frac{\delta}{2}, 4-\frac{\delta}{2}]$. From the asymptotic \eqref{eq:PRBulkSquare}, we see that $n\psi_n^{(\alpha)}(nx)^2$ and $n\psi_{n-1}^{(\alpha)}(nx)^2$ are both uniformly bounded by some constant $C$ for $x\in[\frac{\delta}{2}, 4-\frac{\delta}{2}]$ and all $n > N$. In particular, for sufficiently large $M_1$, $m \ge M_1$ implies: 
  \begin{equation} \label{eq:CrossMLim1}
  \begin{split}
   & \ \ \ \ \abs*{n\int_{\frac{\delta}{2}}^{4-\frac{\delta}{2}} (f(x) - f_m(x))\psi_n^{(\alpha)}(nx)\psi_{n-1}^{(\alpha)}(nx) dx} \\
   & \le \int_{\frac{\delta}{2}}^{4-\frac{\delta}{2}} \abs{f(x) - f_m(x)}\abs{n\psi_n^{(\alpha)}(nx)^2 + n\psi_{n-1}^{(\alpha)}(nx)^2} dx \\
   & \le \norm{n\psi_n^{(\alpha)}(nx)^2 + n\psi_{n-1}^{(\alpha)}(nx)^2}_\infty \int_{\frac{\delta}{2}}^{4-\frac{\delta}{2}} \abs{f(x) - f_m(x)} dx \\
   & \le 2C \int_{\frac{\delta}{2}}^{4-\frac{\delta}{2}} \abs{f(x) - f_m(x)} dx \le \frac{\epsilon}{3}.
  \end{split}
  \end{equation}
By Lemma \ref{lem:BulkSmoothCross}, we have:
  \begin{equation} \label{eq:CrossNLim}
    \lim_{n\to\infty} n\int_{\frac{\delta}{2}}^{4-\frac{\delta}{2}} f_m \psi_n^{(\alpha)}(nx)\psi_{n-1}^{(\alpha)}(nx)dx = \int_{\frac{\delta}{2}}^{4-\frac{\delta}{2}} \frac{(2-x)f_m(x)}{\kappa(x)} dx.
  \end{equation}
Because $\kappa(x)$ is bounded away from $0$ on $[\frac{\delta}{2}, 4-\frac{\delta}{2}]$ and $f$ is supported in $[\delta, 4-\delta]$ we also have:
  \begin{equation} \label{eq:CrossMLim2}
    \lim_{m\to\infty} \int_{\frac{\delta}{2}}^{4-\frac{\delta}{2}} \frac{(2-x)f_m(x)}{\kappa(x)} dx = \int_{\delta}^{4-\delta}  \frac{(2-x)f(x)}{\kappa(x)}dx.
  \end{equation}
Hence for $n > N$:
  \begin{equation*} 
  \begin{split}
     & \ \ \ \ \abs*{n\int_{\delta}^{4-\delta} f(x) \psi_n^{(\alpha)}(nx)\psi_{n-1}^{(\alpha)}(nx)dx - \int_{\delta}^{4-\delta}  \frac{(2-x)f(x)}{\kappa(x)}dx} \\
     & \le \abs*{n\int_{\delta}^{4-\delta} f(x) \psi_n^{(\alpha)}(nx)\psi_{n-1}^{(\alpha)}(nx)dx - n\int_{\frac{\delta}{2}}^{4-\frac{\delta}{2}} f_m(x) \psi_n^{(\alpha)}(nx)\psi_{n-1}^{(\alpha)}(nx)dx} \\
     & \qquad + \abs*{n\int_{\frac{\delta}{2}}^{4-\frac{\delta}{2}} f_m(x) \psi_n^{(\alpha)}(nx)\psi_{n-1}^{(\alpha)}(nx)dx -  \int_{\frac{\delta}{2}}^{4-\frac{\delta}{2}} \frac{(2-x)f_m(x)}{\kappa(x)} dx} \\
     & \qquad+ \abs*{\int_{\frac{\delta}{2}}^{4-\frac{\delta}{2}} \frac{(2-x)f_m(x)}{\kappa(x)} dx - \int_{\delta}^{4-\delta}  \frac{(2-x)f(x)}{\kappa(x)}dx}.
  \end{split}
  \end{equation*}
The first term of the right hand side is less than $\frac{\epsilon}{3}$ when $m\ge M_1$ because of \eqref{eq:CrossMLim1}. Similarly, it follows from \eqref{eq:CrossMLim2} that for $m \ge M_2$, the third term is less than $\frac{\epsilon}{3}$. Finally, fixing $m = \max(M_1, M_2)$, \eqref{eq:CrossNLim} tells us there exists $N$ such that $n > N$ bounds the second term by $\frac{\epsilon}{3}$. Hence for all $n > N$, 
  \begin{equation*}
    \abs*{n\int_{\delta}^{4-\delta} f(x) \psi_n^{(\alpha)}(nx)\psi_{n-1}^{(\alpha)}(nx)dx - \int_{\delta}^{4-\delta}  \frac{(2-x)f(x)}{\kappa(x)}dx} < \epsilon.
  \end{equation*}
\end{proof}
To extend this to the cross term of $\Phi_n(x,y)$, i.e. 
  \[n(n+\alpha)\psi^{(\alpha)}_n(nx)\psi_{n-1}^{(\alpha)}(nx)\psi^{(\alpha)}_n(ny)\psi_{n-1}^{(\alpha)}(ny),\]
we repeat the same steps as the square term, found in Section \ref{sec:SquareTerm}. Namely, we use Egoroff's theorem to arrive at the equivalent of Lemma \ref{lem:SquareBulkError} for the cross term. The method of proving the corresponding Lemma of \ref{lem:DoubleSquareBulk} and \ref{lem:DoubleSquare} for the cross term is identical as before, except we substitute the inequality $\abs{2\psi_{n}^{(\alpha)}(nx)\psi_{n-1}^{(\alpha)}(nx)} < \psi_{n}^{(\alpha)}(nx)^2 + \psi_{n-1}^{(\alpha)}(nx)^2$ where necessary. Of course, it would use Lemma \ref{lem:CrossBulk} instead of Lemma \ref{lem:SquareBulk}. So we get:

\begin{lemma} \label{lem:DoubleCross}
For $f(x,y)\in L^\infty([0,\infty)^2)$, we have, 
  \begin{equation*}
  \begin{split}
    & \ \ \ \ \lim_{n\to\infty} \int_0^\infty \int_0^\infty n(n+\alpha)f(x, y)\psi_{n}^{(\alpha)}(nx)\psi_{n-1}^{(\alpha)}(nx)\psi_{n}^{(\alpha)}(ny)\psi_{n-1}^{(\alpha)}(ny) dxdy \\
    & = \int_0^4 \int_0^4 \frac{(x-2)(y-2)f(x, y)}{\kappa(x)\kappa(y)} dxdy.
  \end{split}
  \end{equation*} 
where $\kappa$ is defined \eqref{eq:kappaDefs}.
\end{lemma}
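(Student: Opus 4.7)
The plan is to mirror the four-step structure developed for the square term (Lemmas \ref{lem:SquareBulk}, \ref{lem:SquareBulkError}, \ref{lem:DoubleSquareBulk}, \ref{lem:DoubleSquare}), replacing the factor $\psi_n^{(\alpha)}(nx)^2$ everywhere with the cross product $\psi_n^{(\alpha)}(nx)\psi_{n-1}^{(\alpha)}(nx)$ and using Lemma \ref{lem:CrossBulk} as the one-variable input in place of Lemma \ref{lem:SquareBulk}. The key auxiliary inequality that makes the whole argument go through without any genuinely new ideas is
\[
\bigl|2\psi_n^{(\alpha)}(nx)\psi_{n-1}^{(\alpha)}(nx)\bigr| \le \psi_n^{(\alpha)}(nx)^2 + \psi_{n-1}^{(\alpha)}(nx)^2,
\]
which lets us dominate every error term for the cross product by an error term of the same form as in the square case.

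First, I would prove a Lemma analogous to \ref{lem:SquareBulkError}: for $f(x,y)\in L^\infty([\delta,4-\delta]^2)$, applying Egoroff's theorem to the sequence of functions $y \mapsto n\int_\delta^{4-\delta} f_y(x) \psi_n^{(\alpha)}(nx)\psi_{n-1}^{(\alpha)}(nx)\,dx$ (which converges pointwise by Lemma \ref{lem:CrossBulk}) yields uniform convergence off a set of arbitrarily small measure to $\int_\delta^{4-\delta} (2-x)f_y(x)/\kappa(x)\,dx$.

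Second, I would establish a bulk version of \ref{lem:DoubleSquareBulk} by rerunning its proof verbatim: split the outer integral into the good set $\Sigma_\epsilon^c$ where the uniform estimate applies and the exceptional set $\Sigma_\epsilon$. On $\Sigma_\epsilon^c$ the contribution is controlled in $L^1$ against the weight $(n+\alpha)|\psi_n^{(\alpha)}(ny)\psi_{n-1}^{(\alpha)}(ny)|$, which by the AM-GM inequality above is dominated by $(n+\alpha)[\psi_n^{(\alpha)}(ny)^2+\psi_{n-1}^{(\alpha)}(ny)^2]$, whose $L^1$ norm is $O(1)$ by orthonormality \eqref{info:weights}. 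On $\Sigma_\epsilon$, the $L^\infty$ bound on $f$ reduces matters to integrating the same dominating weight over a set of small measure, where Lemma \ref{lem:CrossBulk} (for the square terms, i.e.\ Lemma \ref{lem:SquareBulk}) shows the integral is itself small. This yields the bulk limit $\int_\delta^{4-\delta}\int_\delta^{4-\delta} (2-x)(2-y)f(x,y)/(\kappa(x)\kappa(y))\,dx\,dy$.

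Third, I would extend to $f\in L^\infty([0,\infty)^2)$ by the same continuity-of-integrals argument as in Lemma \ref{lem:DoubleSquare}: orthonormality \eqref{info:weights} together with Cauchy--Schwarz gives
\[
\iint_{\Sigma_\delta} n(n+\alpha)\bigl|\psi_n^{(\alpha)}(nx)\psi_{n-1}^{(\alpha)}(nx)\psi_n^{(\alpha)}(ny)\psi_{n-1}^{(\alpha)}(ny)\bigr|\,dx\,dy \le 1,
\]
so combining the bulk limit with the fact that $\int_0^4\int_0^4 (2-x)(2-y)/(\kappa(x)\kappa(y))\,dx\,dy = 0$ (the integrand is odd about $x=2$) controls both tail errors. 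Finally, rewriting $(2-x)(2-y)=(x-2)(y-2)$ gives the stated identity. The main potential obstacle would have been a cross-term asymptotic that was not pointwise $L^\infty$-dominated by the square asymptotic, but \eqref{eq:PRBulkSquare} provides such a uniform bound in the bulk, so this concern does not materialize.
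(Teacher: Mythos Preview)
Your proposal is correct and is exactly the approach the paper itself takes: the paper's proof of Lemma~\ref{lem:DoubleCross} reads in its entirety ``This is essentially combining the proof for Lemma~\ref{lem:DoubleSquare} with Lemma~\ref{lem:CrossBulk} and some straightforward applications of Cauchy--Schwarz and AM--GM,'' and the preceding paragraph spells out precisely your Egoroff step and the substitution $|2\psi_n^{(\alpha)}\psi_{n-1}^{(\alpha)}| \le \psi_n^{(\alpha)2}+\psi_{n-1}^{(\alpha)2}$.

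One small point to tighten in your third step: the observation that the signed edge integral $\iint_{\Sigma_\delta} n(n+\alpha)\psi_n\psi_{n-1}(nx)\psi_n\psi_{n-1}(ny)\,dx\,dy$ is close to $0$ (via orthogonality and the bulk limit being $0$) does \emph{not} by itself control the contribution of a general $f\in L^\infty$ on $\Sigma_\delta$, since the integrand changes sign. What you actually need there is exactly the AM--GM bound you already flagged: dominate $|\psi_n\psi_{n-1}(nx)\psi_n\psi_{n-1}(ny)|$ by a sum of products of squares and then invoke the square-case edge estimate \eqref{eq:DoubleEdgeBound} for each of the four resulting terms. With that adjustment the argument is complete.
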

\begin{proof} 
  This is essentially combining the proof for Lemma \ref{lem:DoubleSquare} with Lemma \ref{lem:CrossBulk} and some straightforward applications of Cauchy-Schwarz and AM-GM.  
\end{proof}
\noindent
It is obvious that Proposition \ref{prop:Double} immediately follows from Lemma \ref{lem:DoubleSquare} and \ref{lem:DoubleCross}.

\subsection{Convergence in \texorpdfstring{$\Omega_1^{\epsilon_1}$}{Omega1}}
The main point of this section is to prove Proposition \ref{prop:BulkVarianceConvergence}. To show that $\T{Var}(\mathcal{N}_n^\circ[f])$ converges (in the domain $\Omega_1^{\epsilon_1}$) to $V_{\T{LUE}}[f]$, we first show that it converges for $f\in L^\infty([0, 4])$ compactly supported in $[\delta, 4-\delta]$ for some $\delta > 0$. We then remove the requirement of compact support by estimating the contribution from the soft and hard edges. Note that since $F(x, y) \ge 0$, assumption \eqref{eq:Assumption1} implies that $F(x, y) \in L^1([0, 4+\epsilon]^2)$. 

\begin{lemma} \label{lem:BulkVariance}
  For $f(x)\in L^\infty([0, 4])$ compactly supported on $[\delta, 4-\delta]$ for some $\delta>0$ such that \eqref{eq:Assumption1} is satisfied,
  \begin{equation*}
    \lim_{n\to\infty} \int_0^4 \int_0^4 F(x, y)\Phi_n^{(\alpha)}(x, y) dx dy = \int_0^4\int_0^4 F(x, y)\Xi(x, y)dx dy,
  \end{equation*}
  where $F(x, y)$, $\Phi_n^{(\alpha)}(x,y)$, and $\Xi(x, y)$ are defined in \eqref{eq:FDefs}, \eqref{eq:PhiDefs}, and \eqref{eq:XiDefs} respectively.
\end{lemma}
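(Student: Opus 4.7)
My plan is to split the integration domain $[0,4]^2$ into a far-off-diagonal region where $F$ is bounded, and a narrow near-diagonal strip where the diagonal singularity of $F$ must be absorbed by a pointwise bound on $\Phi_n^{(\alpha)}$. Concretely, for a parameter $\eta \in (0, \delta/2)$ I would set
\[
A_\eta := \{(x,y) \in [0,4]^2 : \abs{x-y} > \eta\}, \qquad B_\eta := [0,4]^2 \setminus A_\eta.
\]

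On $A_\eta$, the bound $\abs{F(x,y)} \leq (2\norm{f}_\infty/\eta)^2$ places $F \cdot \one_{A_\eta}$, extended by zero to $[0,\infty)^2$, in $L^\infty([0,\infty)^2)$, so Proposition \ref{prop:Double} applies directly and yields
\[
\lim_{n\to\infty} \int_{A_\eta} F(x,y) \Phi_n^{(\alpha)}(x,y) \, dx \, dy = \int_{A_\eta} F(x,y) \Xi(x,y) \, dx \, dy.
\]

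For the near-diagonal piece $B_\eta$ I would exploit the compact support hypothesis: since $\T{supp}(f) \subset [\delta, 4-\delta]$ and $\abs{x-y} \leq \eta < \delta/2$, the condition $F(x,y) \neq 0$ forces $(x,y) \in [\delta/2, 4-\delta/2]^2$. On this compact subset of the open bulk, the asymptotic \eqref{eq:PRBulkSquare} gives $n\psi_{\tilde{n}}^{(\alpha)}(nx)^2 \leq C$ uniformly in $x$ for $n$ large. Applying $\abs{2ab}\leq a^2+b^2$ with $a = \psi_n^{(\alpha)}(nx)\psi_{n-1}^{(\alpha)}(ny)$ and $b = \psi_{n-1}^{(\alpha)}(nx)\psi_n^{(\alpha)}(ny)$ to the sign-indefinite cross term in the definition \eqref{eq:PhiDefs} of $\Phi_n^{(\alpha)}$ then yields a uniform bound $\abs{\Phi_n^{(\alpha)}(x,y)} \leq C'$ on $[\delta/2, 4-\delta/2]^2$. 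Since $\Xi(x,y)$ is also bounded below by a positive constant $c$ on the same set, I obtain the pointwise domination $\abs{\Phi_n^{(\alpha)}(x,y)} \leq (C'/c)\, \Xi(x,y)$, and hence
\[
\int_{B_\eta} F(x,y) \abs{\Phi_n^{(\alpha)}(x,y)} \, dx \, dy \leq \frac{C'}{c} \int_{B_\eta} F(x,y) \Xi(x,y)\, dx\, dy.
\]

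To conclude, assumption \eqref{eq:Assumption1} gives $F\Xi \in L^1([0,4]^2)$, so absolute continuity of the integral yields $\int_{B_\eta} F\Xi \to 0$ as $\eta \to 0$ (since $\abs{B_\eta} \to 0$). Given $\epsilon > 0$, I would first pick $\eta$ small enough that $(C'/c+1)\int_{B_\eta} F\Xi < \epsilon$, and then send $n \to \infty$ using the $A_\eta$-convergence to obtain $\limsup_n \abs{\int_{[0,4]^2} F \Phi_n^{(\alpha)}\, dx\, dy - \int_{[0,4]^2} F \Xi\, dx\, dy} \leq \epsilon$; letting $\epsilon \to 0$ finishes the argument. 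The main obstacle is establishing the uniform bulk bound $\abs{\Phi_n^{(\alpha)}} \lesssim 1$ on $[\delta/2, 4-\delta/2]^2$ --- this requires extracting an $O(1)$ estimate from the oscillatory expression \eqref{eq:PRBulkSquare} and using $\abs{2ab}\leq a^2+b^2$ to absorb the sign-indefinite cross term, rather than relying on any delicate internal cancellation within $\Phi_n^{(\alpha)}$.
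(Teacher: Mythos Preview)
Your proposal is correct and follows essentially the same route as the paper: split $[0,4]^2$ into a near-diagonal strip and its complement, invoke Proposition \ref{prop:Double} on the complement where $F$ is bounded, and on the strip use the compact support of $f$ together with the bulk asymptotic \eqref{eq:PRBulkSquare} and $\abs{2ab}\le a^2+b^2$ to get the uniform bound $\abs{\Phi_n^{(\alpha)}}\le C'$ (the paper records this as \eqref{eq:bulkBoundPhi}), then let the width of the strip go to zero using $F\Xi\in L^1$. The only cosmetic difference is that the paper dominates the strip integral directly by $C'\int_{B_\eta}F$ (using $F\in L^1$) rather than by $(C'/c)\int_{B_\eta}F\,\Xi$, but both estimates are immediate and lead to the same conclusion.
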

\begin{proof}
Let $\delta > 0$ be given and $\nu > 0$ be much less than $\delta$. Let $B_x(a, \nu)$ denote a $\nu$-ball around $a$ in the $x$-domain and let $B_x(a, \nu)^c = [0,4]\setminus B_x(a, \nu)$ denote its complement. In particular, $B_x(y, \nu)$ is an $\nu$-ball around the value of $y$ in the $x$-domain. We will show that the contributions to the variance from such balls are small. Note that the asymptotic \eqref{eq:PRBulkSquare} provides the bound $\abs{n\psi_{\nn}^{(\alpha)}(nx)^2} \le C$ uniformly for $n > N$ and $x \in[\delta - 2\nu, 4-(\delta- 2\nu)]$. Hence for $n > N$ and using Cauchy-Schwarz, for $x, y \in [\delta - 2\nu, 4-(\delta- 2\nu)]^2$,
  \begin{equation} \label{eq:bulkBoundPhi}
    \abs{\Phi_n^{(\alpha)}(x, y)} \le 6C^2.
  \end{equation}
As $F(x,y) = 0$ for $(x, y) \in [0, \delta - \nu]^2$ and $(x, y)\in [4-(\delta - \nu), 4]^2$, we have that:
  \begin{align*}
   & \abs*{\int_0^4 \int_{B_x(y, \nu)} F(x,y) \Phi_n^{(\alpha)}(x, y) dxdy } \le \int_0^4 \int_{B_x(y, \nu)} F(x,y) 6C^2 dxdy.
  \end{align*}
In particular, the functions $\one_{B_x(y, \nu)} F(x, y)$ converges pointwise to $0$ as $\nu \to 0$. So by the dominated convergence theorem we see that uniformly in $n > N$,
  \begin{equation} \label{eq:UniformBoundSquaredLimit}
  \begin{split}
    & \lim_{\nu \to 0}\int_0^4 \int_{B_x(y, \nu)} F(x,y) \Phi_n^{(\alpha)}(x, y) dxdy = 0.
  \end{split}
  \end{equation}
Because of the assumption \eqref{eq:Assumption1},
  \begin{equation} \label{eq:UniformBoundSquaredLimit2}
  \begin{split}
    & \lim_{\nu \to 0}\int_0^4 \int_{B_x(y, \nu)} F(x, y)\Xi(x, y) dx dy = 0.
  \end{split}
  \end{equation}
Hence given $\epsilon > 0$, we can find $\nu_1 > 0$ such that for all $0 < \nu < \nu_1$,
  \begin{equation} \label{eq:VarBulk1}
  \begin{split}
    & \abs*{\int_0^4 \int_{B_x(y, \nu)} F(x, y)\Xi(x, y)} < \frac{\epsilon}{4},\ \ \ \ \abs*{\int_0^4 \int_{B_x(y, \nu)} F(x, y) \Phi_n^{(\alpha)}(x, y) dxdy} < \frac{\epsilon}{4}.
  \end{split}
  \end{equation}
Now in the region $\Sigma_{\nu} := [0,4]^2 \setminus \{(x, y)\ |\ x \in B_x(y, \nu)\} = \{(x, y)\in[0,4]^2| \abs{x-y} \ge \nu\}$ we have:
  \[\norm*{F(x,y)}_{L^\infty(\Sigma_{\nu})} \le 4\nu^{-2}\norm{f(x)}_{L^\infty([0,4])}^2.\]
So from Propostion \ref{prop:Double} we have that for $n > N              $,
  \begin{equation} \label{eq:VarBulk2}
    \abs*{\int_0^4 \int_{B_x(y, \nu)^c}  F(x,y) \Phi_n^{(\alpha)}(x, y) dxdy - \int_0^4 \int_{B_x(y, \nu)^c} F(x,y) \Xi(x, y)}< \frac{\epsilon}{2}.
  \end{equation}
Adding \eqref{eq:VarBulk1} and \eqref{eq:VarBulk2} gives us for $n > N$,
  \begin{equation}
   \abs*{\int_0^4 \int_0^4 F(x,y) \Phi_n^{(\alpha)}(x, y) dxdy - \int_0^4\int_0^4 F(x,y)\Xi(x, y) dxdy}< \epsilon.
  \end{equation}
\end{proof}

We will now remove the condition of compact support and extend the result to $f(x)\in L^\infty([0,4])$. When $x$ and $y$ are apart, Proposition \ref{prop:Double} is enough for convergence. The problem arises when $x$ and $y$ are close, as $F(x,y)$ becomes (possibly) unbounded, and when we are close to the soft or hard edge, as $\Phi_n^{(\alpha)}$ blows up (with $n$). To deal with this, we will need to use universal asymptotics for the Laguerre polynomials to obtain more precise bounds near the hard edge and soft edge. We prove the following two bounds, corresponding to the soft and hard edge respectively. 
\begin{lemma} \label{lem:softEdgeBound}
 There exists $N$ such that for all $n > N$ and $3 <x, y <4+\epsilon$,
  \begin{equation*}
    n^2\psi_{\nn}^{(\alpha)}(ny)^2[\psi_{n}^{(\alpha)}(nx) + \psi_{n-1}^{(\alpha)}(nx)]^2 \le C_1\frac{\abs{4-x}^{\frac{1}{2}}}{\abs{4-y}^\frac{1}{2}} + C_2,
  \end{equation*}
  where $C_1$ and $C_2$ are positive constants not depending on $n$, $x$, or $y$. 
\end{lemma}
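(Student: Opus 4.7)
The plan is to apply the uniform soft-edge asymptotic \eqref{eq:UnifSoftEdge} to $\sqrt{n}\,\psi_{\nn}^{(\alpha)}(nz)$ for $\nn\in\{n-1,n\}$ at both $z=x$ and $z=y$, and to exploit the sign $(-1)^{\nn}$, which flips between the two indices, to obtain the cancellation needed in $\psi_{n}^{(\alpha)}(nx)+\psi_{n-1}^{(\alpha)}(nx)$; the triangle inequality alone would overshoot by a factor of $n$. Set $m_{\nn}:=4\nn+2\alpha+2$, $t_{\nn}(z):=nz/m_{\nn}$, $\eta_{\nn}(z):=-m_{\nn}^{2/3}\zeta(t_{\nn}(z))$, and
\[B_{\nn}(z):=(1+O(n^{-1}))\frac{m_{\nn}^{1/6}|\zeta(t_{\nn}(z))|^{1/4}}{|z(1-t_{\nn}(z))|^{1/4}},\]
so that \eqref{eq:UnifSoftEdge} reads $\sqrt{n}\,\psi_{\nn}^{(\alpha)}(nz)=(-1)^{\nn}B_{\nn}(z)[\T{Ai}(\eta_{\nn}(z))+\mathcal{E}_{\nn}(z)]$ with $\mathcal{E}_{\nn}=O(n^{-1})$. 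Taylor-expanding $\arccos\sqrt{t}-\sqrt{t(1-t)}$ at $t=1$ yields $\zeta(t)=c_{0}(1-t)+O((1-t)^{2})$, so $|\zeta(t)|^{1/4}/|1-t|^{1/4}$ is uniformly bounded for $z\in(3,4+\epsilon)$ and hence $B_{\nn}(z)\le Cn^{1/6}$.

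For the $y$-factor, I combine $B_{\nn}(y)^{2}\le Cm_{\nn}^{1/3}|\zeta(t_{\nn}(y))|^{1/2}/|1-t_{\nn}(y)|^{1/2}$ with the universal Airy envelope $|\T{Ai}(\eta)|\le C(1+|\eta|)^{-1/4}$ (oscillatory regime for $\eta\le0$ and exponential decay for $\eta>0$) to obtain the interpolation bound
\[n\,\psi_{\nn}^{(\alpha)}(ny)^{2}\le\frac{C}{(|4-y|+n^{-2/3})^{1/2}},\]
which gives $C/|4-y|^{1/2}$ away from the soft edge and $Cn^{1/3}$ within $n^{-2/3}$ of it. The error piece from $\mathcal{E}_{\nn}$ contributes $O(n^{-5/3})$ and is absorbed.

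For the $x$-factor the cancellation is essential. Since $(-1)^{n}=-(-1)^{n-1}$, the sum equals
\[\sqrt{n}[\psi_{n}^{(\alpha)}(nx)+\psi_{n-1}^{(\alpha)}(nx)]=(-1)^{n}\bigl\{(B_{n}(x)-B_{n-1}(x))\T{Ai}(\eta_{n}(x))+B_{n-1}(x)[\T{Ai}(\eta_{n}(x))-\T{Ai}(\eta_{n-1}(x))]\bigr\}+O(n^{-5/6}).\]
Differentiating the amplitude formula in the index (using $m_{n}-m_{n-1}=4$) gives $|B_{n}-B_{n-1}|\le Cn^{-5/6}$, handling the first summand. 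For the second, the mean value theorem together with $|\T{Ai}'(\eta)|\le C(1+|\eta|)^{1/4}$ combines with the key estimate
\[\eta_{n}(x)-\eta_{n-1}(x)=-(m_{n}^{2/3}-m_{n-1}^{2/3})\zeta(t_{n})+m_{n-1}^{2/3}[\zeta(t_{n-1})-\zeta(t_{n})]=O(n^{-1/3}),\]
(using $m_{n}^{2/3}-m_{n-1}^{2/3}\sim n^{-1/3}$, $t_{n-1}-t_{n}\sim x/(4n)$, boundedness of $\zeta'$, and $|\eta_{\nn}(x)|\le Cn^{2/3}|4-x|$) to yield $|B_{n-1}(x)[\T{Ai}(\eta_{n}(x))-\T{Ai}(\eta_{n-1}(x))]|\le C|4-x|^{1/4}+Cn^{-1/6}$. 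Squaring produces $|\sqrt{n}[\psi_{n}^{(\alpha)}(nx)+\psi_{n-1}^{(\alpha)}(nx)]|^{2}\le C(|4-x|+n^{-2/3})^{1/2}$. Multiplying against the $y$-bound and using $(a+n^{-2/3})^{1/2}\le a^{1/2}+n^{-1/3}$ together with a case split on $|4-y|\gtrless n^{-2/3}$ delivers the desired bound $C_{1}|4-x|^{1/2}/|4-y|^{1/2}+C_{2}$.

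The main obstacle will be the uniform control of $\eta_{n}-\eta_{n-1}$ and $B_{n}-B_{n-1}$ across both the oscillatory regime $t_{\nn}(x)<1$ and the exponentially decaying regime $t_{\nn}(x)>1$, with matching at the turning point $t_{\nn}(x)=1$ where the expansion $\zeta(t)\approx c_{0}(1-t)$ must be used. Neither difference can afford to lose any powers of $n$, since the whole estimate relies on the $x$-factor scaling like $|4-x|^{1/4}$ rather than like $n^{1/6}$.
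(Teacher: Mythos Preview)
Your approach is correct and is essentially the same as the paper's: both use the soft-edge asymptotic \eqref{eq:UnifSoftEdge}, exploit the alternating sign $(-1)^{\nn}$ to turn the sum into a difference of Airy functions, bound that difference via the mean value theorem with the estimate $|\T{Ai}'(\eta)|\le C(1+|\eta|)^{1/4}$ and the $O(n^{-1/3})$ argument shift, and combine with the one-point envelope for $n\psi_{\nn}^{(\alpha)}(ny)^{2}$. The only cosmetic differences are that you package the $y$- and $x$-bounds as the interpolated quantities $C(|4-z|+n^{-2/3})^{\mp 1/2}$ and separate out the amplitude difference $B_{n}-B_{n-1}$ explicitly, whereas the paper performs a dichotomy on the sign of $(4n+2\alpha+2)^{2/3}\zeta(t_1)-1$ and uses the $\min$ form \eqref{eq:SoftEdgeAiryBounds}; the underlying estimates and scales are identical.
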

\begin{proof}
 We start by finding a bound for $\sqrt{n}\psi_{\nn}^{(\alpha)}(nx)$ when $x$ is near the soft edge. Recall $\zeta(t)$ is defined in \eqref{eq:zetaDefs}. Note that $\frac{d}{dt} \abs{\zeta(t)}^\frac{3}{2} = \mp \frac{3}{4}\frac{\sqrt{\abs{1-t}}}{\sqrt{t}}$ where it takes the negative sign for $t < 1$ and positive sign for $t \ge 1$. Hence by comparing derivatives, there exists positive constants $C_1, C_2$ such that for $\frac{1}{2}< t < \frac{3}{2}$ we have :
  \begin{equation} \label{eq:zetaApprox}
    C_1 < \abs*{\frac{\zeta(t)}{1-t}} < C_2.
  \end{equation}
In particular, for $\frac{1}{2}< t < \frac{3}{2}$ we have $-C_1 \le \zeta'(t) \le -C_2$ for suitable positive constants $C_1, C_2 > 0$. We also have the bound on the Airy function $\abs{\T{Ai}(x)} \le \min(1, 2/\sqrt{\pi}\abs{x}^\frac{1}{4})$ (\cite{NIST}, 9.7.9, 9.7.15). So for $n > N$ the asymptotic \eqref{eq:UnifSoftEdge} implies for $x \ge 2$,
  \begin{equation} \label{eq:SoftEdgeAiryBounds}
    \abs{\sqrt{n}\psi_{\nn}^{(\alpha)}(nx)} \le\min(
      2 (4\nn+2\alpha+2)^{\frac{1}{6}},\ \
      2 \pi^{-\frac{1}{2}} \abs{4-(x-2)^2}^{-\frac{1}{4}})
  \end{equation}
We now move to bounding the term $\psi_{n}^{(\alpha)}(nx) + \psi_{n-1}^{(\alpha)}(nx)$. In particular, the asymptotic of this sum, using \eqref{eq:UnifSoftEdge}, contains a difference of Airy functions. We will bound this difference by the derivative of the Airy function. In particular, we will need to give estimates for the difference of the arguments of the Airy functions. Let us now define,
  \begin{equation} \label{eq:t1t2Defs}
   t_1(x) = \frac{nx}{4n+2\alpha + 2},\ \ \ \ t_2(x) = \frac{nx}{4n+2\alpha-2}.
  \end{equation}
Because $t_2(x) = t_1(x)(1+O(n^{-1}))$,  $(4n+2\alpha-2)^{\frac{2}{3}} = (4n+2\alpha+2)^\frac{2}{3}(1-O(n^{-1}))$, and the constant bounds for $\zeta'(x)$, we have that for $\frac{1}{2} < t_1 < t_2 < \frac{3}{2}$:
  \[\zeta(t_2) = \zeta(t_1) - O(n^{-1}).\]
Note that $\zeta(t)$ is bounded for $\frac{1}{2} < t < \frac{3}{2}$. So for $\frac{1}{2} < t_1 < t_2 < \frac{3}{2}$,
  \begin{equation*}
  \begin{split}
   & \ \ \ \ \T{Ai}(-(4n+2\alpha+2)^\frac{2}{3} \zeta(t_1)) - \T{Ai}(-(4n+2\alpha-2)^\frac{2}{3} \zeta(t_2)) \\
   & = \T{Ai}(-(4n+2\alpha+2)^\frac{2}{3} \zeta(t_1)) - \T{Ai}(-(4n+2\alpha+2)^\frac{2}{3}\zeta(t_1) + O(n^{-\frac{1}{3}}))).
  \end{split}
  \end{equation*}
We also have the following bounds for $\T{Ai}'(x)$: $\abs{\T{Ai}'(x)} \le 2\frac{(-x)^{\frac{1}{4}}}{\sqrt{\pi}}$ when $x \le -1$ and $\abs{\T{Ai}'(x)} \le\frac{2}{\sqrt{\pi}}$ when $x \ge -1$ (\cite{NIST}, 9.7.10). 
Hence we can find sufficiently large constant $C$ so that for all $n > N$ and $\frac{1}{2} < t_1 < t_2 < \frac{3}{2}$:
  \begin{equation*}
  \begin{split}
     & \ \ \ \ \abs*{\frac{\Big[\T{Ai}(-(4n+2\alpha+2)^\frac{2}{3} \zeta(t_1)) - \T{Ai}(-(4n+2\alpha-2)^\frac{2}{3}\zeta(t_2))\Big]}{n^{-\frac{1}{3}}}} \\
     & \le \sup_{x\in [-(4n+2\alpha+2)^\frac{2}{3} \zeta(t_1), -(4n+2\alpha-2)^\frac{2}{3} \zeta(t_2)]} \frac{C}{2}\abs{\T{Ai}'(x)} \\
     & \le \begin{cases}
                  C\pi^{-\frac{1}{2}} [(4n+2\alpha+2)^\frac{2}{3} \zeta(t_1)]^{\frac{1}{4}}, & (4n+2\alpha+2)^\frac{2}{3} \zeta(t_1) \ge 1\\
                  C\pi^{-\frac{1}{2}}, & (4n+2\alpha+2)^\frac{2}{3} \zeta(t_1) \le 1.
               \end{cases}
  \end{split}
  \end{equation*}
Hence for $3<x<4+\epsilon$ and $n>N$ so that $\frac{1}{2} < t_1(x) < t_2(x) <\frac{3}{2}$, the asymptotic \eqref{eq:UnifSoftEdge} gives:
  \begin{equation} \label{eq:PsiSumBounds}
  \begin{split}
    & \ \ \ \ \abs{\sqrt{n}[\psi_{n}^{(\alpha)}(nx) + \psi_{n-1}^{(\alpha)}(nx)]} \\
    & \le \begin{cases}
        (1+O(n^{-1}))\frac{C\abs{1-t_1}^{\frac{1}{4}}}{\sqrt{\pi}x^{\frac{1}{4}}} + \frac{(4n+2\alpha+2)^{-\frac{1}{6}}}{x^{\frac{1}{4}}}O(n^{-\frac{2}{3}}), & (4n+2\alpha+2)^\frac{2}{3} \zeta(t_1) \ge 1 \\
        (1+O(n^{-1}))\frac{C(4n+2\alpha+2)^{-\frac{1}{6}}}{x^{\frac{1}{4}}}(1 + O(n^{-\frac{2}{3}})), & (4n+2\alpha+2)^\frac{2}{3} \zeta(t_1) \le 1.
  \end{cases}
  \end{split}
  \end{equation}
where $C$ is some sufficiently large positive constant. We note the $O(n^{-1})$ term in \eqref{eq:PsiSumBounds} comes directly from the asymptotics and have no dependence on anything but $\alpha$ and $n$. Combining \eqref{eq:zetaApprox}, \eqref{eq:SoftEdgeAiryBounds}, and \eqref{eq:PsiSumBounds} we get the bound for $n > N$ and $3 <x, y <4+\epsilon$:
  \begin{equation*} 
    n^2\psi_{\nn}^{(\alpha)}(ny)^2[\psi_{n}^{(\alpha)}(nx) + \psi_{n-1}^{(\alpha)}(nx)]^2 \le \begin{cases}
        C_1\frac{\abs{4-x}^{\frac{1}{2}}}{\abs{4-y}^\frac{1}{2}} + O(n^{-\frac{1}{2}}), & (4n+2\alpha+2)^\frac{2}{3} \zeta(t_1) \ge 1 \\
        C_2, & (4n+2\alpha+2)^\frac{2}{3} \zeta(t_1) \le 1.
        \end{cases}
  \end{equation*}
where $C_1$ and $C_2$ are positive constants not depending on $n$, $x$, or $y$ and the error $O(n^{-\frac{1}{2}})$ is uniform bounded in $x, y$. Taking the sum of these two bounds concludes the argument.
\end{proof}

\begin{lemma} \label{lem:hardEdgeBound}
 There exists $N$ such that for all $n > N$ and $x, y < 1$,
  \begin{equation*}
    n^2\psi_{\nn}^{(\alpha)}(ny)^2[\psi_{n}^{(\alpha)}(nx) - \psi_{n-1}^{(\alpha)}(nx)]^2 \le C_1 \Big(\frac{\sqrt{x}}{\sqrt{y}} + \frac{\sqrt{y}}{\sqrt{x}}\Big) + C_2,
  \end{equation*}
 where $C_1$ and $C_2$ are positive constants not depending on $n$, $x$, or $y$.
\end{lemma}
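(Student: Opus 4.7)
The plan is to mirror the proof of Lemma \ref{lem:softEdgeBound}, replacing the soft-edge asymptotic \eqref{eq:UnifSoftEdge} by the hard-edge asymptotic \eqref{eq:UnifHardEdge} and the Airy-function bounds by the Bessel-function bounds $|J_\alpha(z)|, |J_\alpha'(z)| \leq C/\sqrt{1+z}$, which hold uniformly in $\alpha \geq 0$ and $z \geq 0$. Unlike \eqref{eq:UnifSoftEdge}, the hard-edge asymptotic has no $(-1)^\nn$ sign factor, so here one estimates the difference $\psi_n(nx) - \psi_{n-1}(nx)$ (rather than the sum used in Lemma \ref{lem:softEdgeBound}), which is the combination that produces a small argument difference inside the Bessel functions.

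Writing $t_\nn(x) = nx/(4\nn+2\alpha+2)$ and $\omega_\nn(x) = (4\nn+2\alpha+2)\eta(t_\nn(x))$, one first observes that on $x \in (0,1)$ the function $\eta(t) = \tfrac{1}{2}\sqrt{t(1-t)} + \tfrac{1}{2}\arcsin\sqrt{t}$ satisfies $\eta(t) \asymp \sqrt{t}$ on $t \in (0, 1/3)$, so that the prefactor in \eqref{eq:UnifHardEdge} is uniformly of order $\sqrt{n}$ (its $\eta^{1/2}$ numerator cancels the $x^{1/4}$ denominator). Since $\omega_\nn(y) \geq cn\sqrt{y}$ for a positive constant $c$, the Bessel bound then gives $n\psi_\nn^{(\alpha)}(ny)^2 \leq C/\sqrt{y}$ uniformly, with the small-argument regime $cn\sqrt{y} \leq 1$ automatically absorbed since $1/\sqrt{y} \geq cn$ there.

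For the difference I would decompose
\[\sqrt{n}[\psi_n(nx) - \psi_{n-1}(nx)] = A_n(x)[J_\alpha(\omega_n(x)) - J_\alpha(\omega_{n-1}(x))] + [A_n(x) - A_{n-1}(x)]J_\alpha(\omega_{n-1}(x)) + O(n^{-1/2}),\]
where $A_\nn(x)$ denotes the prefactor of \eqref{eq:UnifHardEdge}. The prefactor-difference term is $O(n^{-1/2})$ because $A_n - A_{n-1} = A_n \cdot O(n^{-1})$. For the main term, one computes $\omega_n - \omega_{n-1} = 4\eta(t_n) + (4n+2\alpha-2)[\eta(t_n) - \eta(t_{n-1})]$ with $t_n - t_{n-1} = -x/(4n) + O(n^{-2})$, and a Taylor expansion shows that the $\sqrt{t_n(1-t_n)}$ contributions from the two pieces cancel, leaving $\omega_n - \omega_{n-1} = 2\arcsin\sqrt{t_n} + O(n^{-1}) = O(\sqrt{x})$. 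The mean value theorem applied together with $|J_\alpha'(z)| \leq C/\sqrt{1+z}$ then gives $|J_\alpha(\omega_n) - J_\alpha(\omega_{n-1})| \leq C\sqrt{x}/\sqrt{1+cn\sqrt{x}}$, and squaring yields $n[\psi_n(nx) - \psi_{n-1}(nx)]^2 \leq C\sqrt{x} + C/n$ uniformly on $(0,1)$.

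Multiplying the two estimates and splitting on whether $y \geq 1/(4n^2)$ or $y < 1/(4n^2)$ yields the claimed bound: in the first regime the product is at most $C\sqrt{x/y} + C$, while in the second regime one instead uses $n\psi_\nn^{(\alpha)}(ny)^2 \leq Cn$ together with $1/\sqrt{y} \geq 2n$ to absorb $n\sqrt{x}$ into $C\sqrt{x/y}$. The main technical obstacle is the near-cancellation in $\omega_n - \omega_{n-1}$: both arguments are individually of order $n\sqrt{x}$, and the $O(\sqrt{x})$ residue emerges only after carefully matching the $\sqrt{x(1-x/4)}$ contributions from $4\eta(t_n)$ against those from the Taylor expansion of $\eta(t_n) - \eta(t_{n-1})$. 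A secondary issue is maintaining uniformity as $x, y \to 0^+$, where the Bessel-function bounds transition between their bounded-at-origin and $1/\sqrt{z}$ oscillatory decay regimes.
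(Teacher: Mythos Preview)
Your proposal is correct and follows essentially the same route as the paper: use the hard-edge asymptotic \eqref{eq:UnifHardEdge}, show that the difference of the Bessel arguments is $O(\sqrt{x})$, apply the bound $|J_\alpha'(z)|\le C_\alpha/\sqrt{z}$ to get $|\sqrt{n}[\psi_n(nx)-\psi_{n-1}(nx)]|\le Cx^{1/4}+O(n^{-1/2})$, and combine with the two-regime bound on $n\psi_{\nn}(ny)^2$. The only organizational difference is in how the $O(\sqrt{x})$ argument gap is extracted: the paper shows $\eta(t_2)=\eta(t_1)(1+O(n^{-1}))$ by expanding $\sqrt{t-t^2}$ and $\arcsin\sqrt{t}$ term by term, whereas you write $\omega_n-\omega_{n-1}=4\eta(t_n)+(4n+2\alpha-2)[\eta(t_n)-\eta(t_{n-1})]$ and identify the cancellation of the $\sqrt{t(1-t)}$ contributions via Taylor expansion, leaving $2\arcsin\sqrt{t_n}$. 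Both computations encode the same mechanism. One small inaccuracy: the Bessel bounds $|J_\alpha(z)|,|J_\alpha'(z)|\le C/\sqrt{1+z}$ are \emph{not} uniform in $\alpha\ge 0$; the constant depends on $\alpha$ (the paper writes $C_\alpha$). Since $\alpha$ is fixed throughout this is harmless, but the uniformity claim should be dropped.
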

\begin{proof}
Note that the asymptotic of the difference $\psi_{n}^{(\alpha)}(nx) - \psi_{n-1}^{(\alpha)}(nx)$, using \eqref{eq:UnifHardEdge}, contains a difference of Bessel functions. We want to bound this difference by a derivative of Bessel functions. To do so, we need to provide estimates for how close the arguments of the two Bessel functions are. Recall the definition $\eta(t) = \frac{1}{2}(t-t^2)^{\frac{1}{2}} + \frac{1}{2}\arcsin \sqrt{t}$ from \eqref{eq:UnifHardEdge}. Note that $\frac{d}{dt}\eta(t) = \frac{\sqrt{1-t}}{\sqrt{t}}$. By comparing derivatives, we can see that there exists positive constants $C_1, C_2$ such that for $0 < t < \frac{3}{5}$,
  \begin{equation} \label{eq:etaBounds}
    C_1 < \frac{\eta(t)}{\sqrt{t}} < C_2
  \end{equation}
Let $t_1(x)$ and $t_2(x)$ be defined as in \eqref{eq:t1t2Defs}. We want to find bounds for the difference between $\eta(t_1)$ and $\eta(t_2)$. Let
  \begin{equation*}
    \omega = \frac{4nx}{(4n+2\alpha+2)(4n+2\alpha-2)},
  \end{equation*}
and write $t_2(x) = t_1(x) + \omega$. Then for $0 < t_1 \le t_2<\frac{2}{5}$:
\begin{equation*}
    (t_2 - t_2^2)^\frac{1}{2} = (t_1 - t_1^2)^\frac{1}{2}\Big(1 + \frac{2-4t_1}{(4n+2\alpha-2)(1-t_1)} + O(n^{-2}) \Big).
  \end{equation*}
We also have:
  \begin{equation*}
  \begin{split}
    \arcsin \sqrt{t_2} & = \arcsin \sqrt{t_1+\omega} = \arcsin \Big(\sqrt{t_1}\Big(1+\frac{\omega}{2} + O(n^{-2}) \Big)\Big).
  \end{split}
  \end{equation*}
Note that for $0 < t_1 < \frac{2}{5}$, we have $\sqrt{t_1} \le \arcsin \sqrt{t_1} \le 2\sqrt{t_1}$. In particular, for $0 < t_1 \le t_2 < \frac{2}{5}$ and $n > N$,
  \begin{equation*}
    \arcsin \sqrt{t_1} \le \arcsin \sqrt{t_2} \le \arcsin \sqrt{t_1} + \sqrt{t_1}\omega \le \arcsin \sqrt{t_1}(1+\omega).
  \end{equation*}
So we can conclude that $ \arcsin \sqrt{t_2} = \arcsin \sqrt{t_1} (1+O(n^{-1}))$ where the error $O(n^{-1})$ is bounded above by $\omega$ and bounded below by $0$. Note that $\eta'(t) > 0$ for $0 < t < \frac{2}{5}$. Thus for $0 < t_1 \le t_2 < \frac{2}{5}$ and $n > N$,
  \begin{equation} \label{eq:etaEstimate}
    \eta(t_2) = \eta(t_1)(1+O(n^{-1}))
  \end{equation}
where the error is bounded above by $2(\frac{2-4t_1}{(4n+2\alpha-2)(1-t_1)} + \omega)$ and below by $0$. The main point is that the bound on the $O(n^{-1})$ difference in \eqref{eq:etaEstimate} holds uniformly for $0 < t_1 \le t_2 < \frac{2}{5}$. In particular, for $x < 1$ we can find $N$ so that for all $n > N$,
  \[2\Big(\frac{2-4t_1}{(4n+2\alpha-2)(1-t_1)} + \omega\Big) < Cn^{-1},\]
where $C$ is some positive constant independent of $n$ and $x$. Hence for $x < 1$ and $n > N$,
  \begin{equation}\label{eq:BesselDerivative}
  \begin{split}
   & \ \ \ \ \eta(t_1)^{-1}[J_\alpha((4n+2\alpha+2) \eta(t_1)) - J_\alpha((4n+2\alpha-2) \eta(t_2))] \\
   & \le C \sup_{x\in [(4n+2\alpha+2) \eta(t_1), (4n+2\alpha-2) \eta(t_2)]} \abs{J_\alpha'(x)}.
  \end{split}
  \end{equation}
Using the relations $J_\alpha'(x) = \frac{J_{\alpha-1}(x) - J_{\alpha+1}(x)}{2}$ and $J_0'(x) = J_1(x)$ (\cite{NIST}, 10.6.1), we are able to simplify \eqref{eq:BesselDerivative}. The Bessel functions $\abs{J_\alpha(x)}$ ($\alpha \ge 0$, $x \ge 0$) are bounded above by $C_\alpha\sqrt{\frac{2}{\pi x}}$ where $C_\alpha$ is some constant depending on $\alpha$. Such a $C_\alpha$ exists because of the bound given by \cite{Landau2000} (which gives the uniform bound $\abs{J_\alpha(x)} \le \frac{c}{x^{1/3}}$ with $c \approx 0.79$) in conjunction with known estimates (see \cite{Watson1995}, pp. 206 or \cite{NIST}, 10.17.3). So we have the bound $\abs{J_{\alpha}'(x)} \le C_{\alpha}' x^{-\frac{1}{2}}$. Applying it to \eqref{eq:BesselDerivative} with the asymptotic \eqref{eq:UnifHardEdge}, we get for $x < 1$ and $n > N$,
  \begin{equation} \label{eq:HardEdgeEstimateDiff}
  \begin{split}
    \abs{\sqrt{n}[\psi_{n}^{(\alpha)}(nx) - \psi_{n-1}^{(\alpha)}(nx)]} \le C C_\alpha'\frac{x^{\frac{1}{4}}}{2} (1-t_1)^{-\frac{1}{4}} + \frac{x^\frac{1}{2}}{2} (1-t_1)^{-\frac{1}{4}}O(n^{-\frac{1}{2}}) + O(n^{-\frac{1}{2}}).
  \end{split}
  \end{equation}
Using the same Bessel bounds along with the asymptotics \eqref{eq:UnifHardEdge} we also get:
  \begin{equation} \label{eq:HardEdgeBoundsPsi}
    \abs{\sqrt{n}\psi_n^{(\alpha)}(nx)} \le \min(
      (4n + 2\alpha + 2)^{\frac{1}{2}}, C_\alpha\pi^{-\frac{1}{2}}(4x-x^2)^{-\frac{1}{4}})
  \end{equation}
Consequently, using \eqref{eq:etaBounds},\eqref{eq:HardEdgeEstimateDiff}, and \eqref{eq:HardEdgeBoundsPsi}, we get that for $x, y < 1$ and $n > N$,
  \begin{equation*}
  \begin{split}
    &\ \ \ \ n^2\psi_{\nn}^{(\alpha)}(ny)^2[\psi_{n}^{(\alpha)}(nx) - \psi_{n-1}^{(\alpha)}(nx)]^2 \\
    & \le (1 + O(n^{-1}))\frac{Cx^{\frac{1}{2}}}{y^\frac{1}{2}} (1-t_1)^{-\frac{1}{2}} + \frac{x^\frac{3}{4}}{y^\frac{1}{2}} (1-t_1)^{-\frac{1}{2}}O(n^{-\frac{1}{2}}) + \frac{Cx^{\frac{1}{4}}}{y^{\frac{1}{4}}} + O(1)\\
    & \le C_1 \Big(\frac{\sqrt{x}}{\sqrt{y}} + \frac{\sqrt{y}}{\sqrt{x}}\Big) + C_2.
  \end{split}
  \end{equation*}
where $C_1, C_2 > 0$ are positive constants not dependent on $n$, $x$, or $y$.
\end{proof}

With these two bounds we can now show the following convergence result. Note that the convergence is only exact on part of the domain $\Omega_1^{\epsilon_1}$ (more precisely only on $\Omega_1^0$). However it does provide us with a uniform bound for $\T{Var}^\epsilon(\mathcal{N}_n^\circ[f])$ in $n > N$, a fact which we will take full advantage of later. 
\begin{prop} \label{prop:BulkVarianceConvergence}
   For $f(x)\in L^\infty([0, 4])$ such that \eqref{eq:Assumption1} is satisfied for some $\epsilon > 0$, there exists $N$ such that the following is uniformly bounded in $n > N$:
  \begin{equation*}
    \T{Var}^\epsilon(\mathcal{N}_n^\circ[f]) := \int_0^{4+\epsilon} \int_0^{4+\epsilon} F(x, y)\Phi_n^{(\alpha)}(x, y) dx dy,
  \end{equation*}
  In particular this implies that
  \begin{equation*}
    \lim_{n\to\infty} \int_0^4 \int_0^4 F(x, y)\Phi_n^{(\alpha)}(x, y) dx dy = \int_0^4\int_0^4 F(x, y)\Xi(x, y) dx dy,
  \end{equation*}
  where $F(x, y)$, $\Phi_n^{(\alpha)}(x,y)$, and $\Xi(x, y)$ are defined in \eqref{eq:FDefs}, \eqref{eq:PhiDefs}, and \eqref{eq:XiDefs}. 
\end{prop}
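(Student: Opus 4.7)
The plan is to establish, uniformly in $n > N$ and $(x,y) \in [0, 4+\epsilon]^2$, a pointwise dominator of the form $|\Phi_n^{(\alpha)}(x,y)| \le C\,\tilde\Xi(x,y)$, where $\tilde\Xi$ is the absolute-value extension of $\Xi$ appearing in \eqref{eq:Assumption1}. Once this pointwise bound is in hand, the uniform upper bound $\T{Var}^\epsilon(\mathcal{N}_n^\circ[f]) \le C\, V_{\T{LUE}}^\epsilon[f]$ is immediate from assumption \eqref{eq:Assumption1}, and the convergence statement on $[0,4]^2$ follows by a truncation argument combined with Lemma \ref{lem:BulkVariance}.

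For the pointwise bound, after symmetrizing in $(x,y)$ to rewrite $\int F\,\Phi_n^{(\alpha)}$ as $\tfrac{1}{2}\int F\cdot n(n+\alpha)[\psi_{n-1}^{(\alpha)}(nx)\psi_n^{(\alpha)}(ny) - \psi_n^{(\alpha)}(nx)\psi_{n-1}^{(\alpha)}(ny)]^2$, the algebraic identity
\[bc - ad = \tfrac{1}{2}\big[(a+b)(c-d) - (a-b)(c+d)\big]\]
with $a = \psi_n^{(\alpha)}(nx)$, $b = \psi_{n-1}^{(\alpha)}(nx)$, $c = \psi_n^{(\alpha)}(ny)$, $d = \psi_{n-1}^{(\alpha)}(ny)$, combined with $(u-v)^2 \le 2(u^2+v^2)$, yields
\[n(n+\alpha)(bc-ad)^2 \le \tfrac{n(n+\alpha)}{2}\big[(a+b)^2(c-d)^2 + (a-b)^2(c+d)^2\big].\]
I would then partition $[0, 4+\epsilon]^2$ and verify $\Phi_n^{(\alpha)} \le C\tilde\Xi$ case by case. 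In the soft edge region (both $x,y$ close to $4$) the sum $(a+b)^2(nx)$ supplies the crucial cancellation and Lemma \ref{lem:softEdgeBound} gives the $\sqrt{|4-x|/|4-y|}$ envelope which matches the soft-edge portion of $\tilde\Xi$ (the hard-edge factor being bounded there); the hard edge region (both close to $0$) is handled symmetrically with Lemma \ref{lem:hardEdgeBound}; in the bulk, the asymptotic \eqref{eq:PRBulkSquare} gives $n\psi_{\nn}^{(\alpha)}(n\cdot)^2 \lesssim 1/\sqrt{|4x-x^2|}$ uniformly while $\tilde\Xi$ is bounded below; and in the mixed regions (one argument near an edge, the other in the bulk or near the opposite edge) the individual envelope bounds from \eqref{eq:SoftEdgeAiryBounds} and \eqref{eq:HardEdgeBoundsPsi} already match $\tilde\Xi$, since no sum/difference cancellation between adjacent Laguerre polynomials is required when the two arguments do not sit near the same edge.

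For the convergence, fix $\eta > 0$. Since $F\tilde\Xi \in L^1([0,4+\epsilon]^2)$ by \eqref{eq:Assumption1}, dominated convergence lets me choose $\delta > 0$ so small that $\iint_{[0,4]^2 \setminus [\delta, 4-\delta]^2} F\tilde\Xi\,dx\,dy < \eta/(3C)$; the pointwise dominator then controls $|\iint_{[0,4]^2 \setminus [\delta, 4-\delta]^2} F\Phi_n^{(\alpha)}|$ by $\eta/3$ uniformly in $n > N$, and the same bound holds for the corresponding $F\Xi$ integral. On $[\delta, 4-\delta]^2$, $\tilde\Xi$ is bounded below, so $F \in L^1([\delta, 4-\delta]^2)$ by \eqref{eq:Assumption1}, and the proof of Lemma \ref{lem:BulkVariance} applies essentially unchanged: the near-diagonal piece $\{|x-y| < \nu\}$ is controlled uniformly in $n$ by the bulk bound $\Phi_n^{(\alpha)} \le C_\delta$ from \eqref{eq:PRBulkSquare} times $\iint_{|x-y|<\nu \cap [\delta,4-\delta]^2} F$ (which vanishes as $\nu \to 0$), and on the complement $F$ is bounded so Proposition \ref{prop:Double} gives convergence of the relevant integrals to the limit involving $\Xi$.

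The principal obstacle is the pointwise bound in the edge regions: without the sum/difference cancellation encoded via derivatives of Airy and Bessel functions in Lemmas \ref{lem:softEdgeBound} and \ref{lem:hardEdgeBound}, the purely individual estimate would only give $\Phi_n^{(\alpha)} \lesssim 1/\sqrt{|4-x||4-y|}$ near $(4,4)$ and $\Phi_n^{(\alpha)} \lesssim 1/\sqrt{xy}$ near $(0,0)$, both of which fail to be dominated by $\tilde\Xi$ and would prevent $F\Phi_n^{(\alpha)}$ from being integrable.
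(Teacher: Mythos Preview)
Your proposal is correct and follows essentially the same strategy as the paper's proof. The only noteworthy differences are cosmetic: the paper uses two separate algebraic rewritings of $(x-y)K_n^{(\alpha)}(x,y)$, namely $b(c+d)-d(a+b)$ for the soft edge and $b(c-d)-d(a-b)$ for the hard edge, so that Lemmas \ref{lem:softEdgeBound} and \ref{lem:hardEdgeBound} apply directly to each squared summand without the intermediate step $(c\mp d)^2\le 2(c^2+d^2)$; and on the off-diagonal ``mixed'' region $\Sigma_3$ the paper forgoes a pointwise bound and instead notes that $F$ is $L^\infty$ there and invokes Proposition \ref{prop:Double}, whereas you push the pointwise domination $\Phi_n^{(\alpha)}\le C\tilde\Xi$ through the entire square---which does hold by the individual envelope bounds, as you say.
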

\begin{proof}
It will be convenient to go back to the representation of the variance given by \eqref{eq:varKernalRep}. In particular, we will expand the numerator of $K_n^{(\alpha)}(x,y)$ as:
  \begin{equation*}
  \begin{split}
      (x-y)K_n^{(\alpha)}(x,y)& = \psi_{n-1}^{(\alpha)}(nx)[\psi_{n}^{(\alpha)}(ny) + \psi_{n-1}^{(\alpha)}(ny)] - \psi_{n-1}^{(\alpha)}(ny)[\psi_{n}^{(\alpha)}(nx)+\psi_{n-1}^{(\alpha)}(nx)] \\
      & = \psi_{n-1}^{(\alpha)}(nx)[\psi_{n}^{(\alpha)}(ny) - \psi_{n-1}^{(\alpha)}(ny)] - \psi_{n-1}^{(\alpha)}(ny)[\psi_{n}^{(\alpha)}(nx)-\psi_{n-1}^{(\alpha)}(nx)].
  \end{split}
  \end{equation*}
where the first line will be needed for the soft-edge while the second for the hard edge. Let $\Sigma_1 = [0,1)^2 \cup [3, 4+\epsilon)^2$. Then Lemma \ref{lem:softEdgeBound} and Lemma \ref{lem:hardEdgeBound} tells us there exists $N$ so that for all $n > N$ and $(x, y) \in \Sigma_1$ and we have,
\begin{equation*}
  \begin{split}
    (x-y)^2K_n^{(\alpha)}(x,y)^2 \le C\bigg[\Big(\frac{\sqrt{\abs{x}}}{\sqrt{\abs{y}}} + \frac{\sqrt{\abs{y}}}{\sqrt{\abs{x}}}\Big) + \Big(\frac{\sqrt{\abs{4-x}}}{\sqrt{\abs{4-y}}} + \frac{\sqrt{\abs{4-y}}}{\sqrt{\abs{4-x}}}\Big) + 1\bigg].
  \end{split}
  \end{equation*} 
where $C>0$ is some sufficiently large constant. Let us denote:
  \begin{equation}\label{eq:BarXiDefs}
    \tilde{\Xi}(x, y) := \frac{1}{8\pi^2}\Big(\frac{\sqrt{\abs{(4-y)x}}}{\sqrt{\abs{(4-x)y}}} + \frac{\sqrt{\abs{(4-x)y}}}{\sqrt{\abs{(4-y)x}}}\Big)
  \end{equation}
Then we get the inequalities,
  \begin{equation*}
   \tilde{\Xi}(x, y) \ge \begin{cases} \frac{1}{8\pi^2}\Big(\frac{\sqrt{x}}{\sqrt{y}} + \frac{\sqrt{y}}{\sqrt{x}}\Big), & 0 \le x, y < 2 \\
   \frac{1}{8\pi^2}\Big(\frac{\sqrt{\abs{4-x}}}{\sqrt{\abs{4-y}}} + \frac{\sqrt{\abs{4-y}}}{\sqrt{\abs{4-x}}}\Big), & 2 < x, y \\
   \frac{1}{4\pi^2}, & 0 \le x, y \le 4 + \epsilon
                       \end{cases}
  \end{equation*}
Hence there exists a positive constant $C$ so that for $n > N$,
  \begin{equation*}
  \begin{split}
    \iint_{\Sigma_1} F(x, y)\Phi_n^{(\alpha)}(x, y) dx dy & \le C\iint_{\Sigma_1} F(x, y)\tilde{\Xi}(x, y) dx dy \le C V_{\T{LUE}}^\epsilon[f].
  \end{split}
  \end{equation*}
For $(x, y)\in \Sigma_2 = [\delta ,4-\delta]^2$ we have the bound \eqref{eq:bulkBoundPhi}. Hence for $n > N$,
  \begin{equation*}
  \begin{split}
    \iint_{\Sigma_2} F(x, y)\Phi_n^{(\alpha)}(x, y) dx dy & \le C\iint_{\Sigma_2}F(x, y)\tilde{\Xi}(x, y)dx dy \le C V_{\T{LUE}}^\epsilon[f],
  \end{split}
  \end{equation*}
where $C>0$ is a positive constant. Finally for $(x, y) \in \Sigma_3 = [0,4+\epsilon)^2\setminus (\Sigma_1\cup \Sigma_2)$ we have that $F(x, y)$ is $L^\infty$. Hence Proposition \ref{prop:Double} tells us that for $n > N$, 
  \begin{equation*}
  \begin{split}
    \iint_{\Sigma_3} F(x, y)\Phi_n^{(\alpha)}(x, y) dx dy & \le 2 \iint_{\Sigma_3} F(x, y)\tilde{\Xi}(x, y) dx dy \le 2V_{\T{LUE}}^\epsilon[f].
  \end{split}
  \end{equation*}
Hence we have the uniform bound for $n > N$,
  \begin{equation*}
    \T{Var}^\epsilon(\mathcal{N}_n^\circ[f]) = \int_0^{4+\epsilon} \int_0^{4+\epsilon} F(x, y)\Phi_n^{(\alpha)}(x, y) dx dy \le C V_{\T{LUE}}^\epsilon[f].
  \end{equation*}
For the second part, because of the continuity of integrals, given $\nu > 0$ we can find $\delta$ such that for all $n > N$,
  \begin{equation} \label{eq:BulkBoundaryEstimate}
    \iint_{[0,4]^2 \setminus [\delta, 4-\delta]^2}F(x, y)\Phi_n^{(\alpha)}(x, y) dx dy \le C\iint_{[0,4]^2 \setminus [\delta, 4-\delta]^2}F(x, y)\Xi(x, y) < \frac{\nu}{2},
  \end{equation}
where we recall $\Xi$ is given by \eqref{eq:XiDefs}. Then Lemma \ref{lem:BulkVariance} tells us that for $n > N$,
  \begin{equation} \label{eq:BulkInsideEstimate}
   \bigg\lvert \iint_{[\delta, 4-\delta]^2} F(x, y)\Phi_n^{(\alpha)}(x, y) dx dy - \int_0^4\int_0^4 F(x, y)\Xi(x, y)dx dy \bigg\rvert < \frac{\nu}{2}.
  \end{equation}
Adding \eqref{eq:BulkBoundaryEstimate} and \eqref{eq:BulkInsideEstimate}, we see that for $n > N$,
  \begin{equation*}
  \bigg\lvert \int_0^4 \int_0^4 F(x, y)\Phi_n^{(\alpha)}(x, y) dx dy - \int_0^4\int_0^4 F(x, y)\Xi(x, y) dx dy \bigg\rvert < \nu.
  \end{equation*}
\end{proof}

\subsection{Convergence in \texorpdfstring{$\Omega_2^{\epsilon_2}$}{Omega2}} 
In $\Omega_2^{\epsilon_2}$, the problem arises once again in a neighborhood of $x=y$, as when $x$ and $y$ are apart, $F(x, y)$ is $L^\infty$ and Proposition \ref{prop:Double} will suffice. The following result shows that the contribution from the region where $x$ is close to $y$ does not blow up, and tends to $0$. 
\begin{prop} \label{prop:outsideConvergenceVariance}
For $f\in L^\infty([4+\epsilon, \infty))$ where $\epsilon > 0$ we have,
  \begin{equation*}
    \lim_{n\to\infty }\int_{4+\epsilon}^\infty \int_{4+\epsilon}^\infty F(x, y) \Phi_n^{(\alpha)}(x, y) dx dy = 0,
  \end{equation*}
 where $F(x, y)$ and $\Phi_n^{(\alpha)}(x, y)$ are defined in \eqref{eq:FDefs} and \eqref{eq:PhiDefs}.
\end{prop}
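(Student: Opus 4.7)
The strategy is to use symmetrization to convert the integral into one of the kernel squared, apply Cauchy--Schwarz to reduce to a diagonal single-variable estimate, and then control that estimate via the exponential decay of the Laguerre polynomials outside the bulk.

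Expanding the square on the right-hand side of the Christoffel--Darboux formula \eqref{eq:ChrisDarb} gives the identity $\Phi_n^{(\alpha)}(x,y) + \Phi_n^{(\alpha)}(y,x) = (x-y)^2 K_n^{(\alpha)}(x,y)^2$. Combined with the symmetry of $F(x,y)$ in $x,y$ and of the domain $[4+\epsilon,\infty)^2$, this gives
\[\int_{4+\epsilon}^\infty \int_{4+\epsilon}^\infty F(x,y) \Phi_n^{(\alpha)}(x,y) \, dx\, dy = \frac{1}{2} \int_{4+\epsilon}^\infty \int_{4+\epsilon}^\infty (f(x)-f(y))^2 K_n^{(\alpha)}(x,y)^2 \, dx\, dy.\]
Since $f \in L^\infty$ we have $(f(x)-f(y))^2 \le 4\norm{f}_\infty^2$, and Cauchy--Schwarz applied to the sum representation of $K_n^{(\alpha)}$ yields $K_n^{(\alpha)}(x,y)^2 \le K_n^{(\alpha)}(x,x) K_n^{(\alpha)}(y,y)$. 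These two bounds reduce the proposition to showing
\[\int_{4+\epsilon}^\infty K_n^{(\alpha)}(x,x) \, dx \longrightarrow 0 \quad \text{as} \quad n \to \infty.\]

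To establish this, apply L'Hopital to \eqref{eq:ChrisDarb} together with the recurrence \eqref{eq:psiDerivativeRecurrence} to write $K_n^{(\alpha)}(x,x)$ as a polynomial-in-$n$ bilinear expression in $\psi_n^{(\alpha)}(nx)$ and $\psi_{n-1}^{(\alpha)}(nx)$. For $x \in [4+\epsilon, M]$ with any fixed $M > 4+\epsilon$, the uniform soft-edge asymptotic \eqref{eq:UnifSoftEdge} has $t = nx/(4n+2\alpha+2) > 1$, so by \eqref{eq:zetaDefs} $\zeta(t) < 0$ with $|\zeta(t)|$ bounded below by a positive constant $c = c(\epsilon) > 0$. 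The standard bound $\T{Ai}(s) \le C e^{-\frac{2}{3} s^{3/2}}$ for $s \ge 0$ then yields $|\psi_k^{(\alpha)}(nx)| \le C n^{1/6} e^{-cn}$ (for an adjusted constant $c$) uniformly in $x \in [4+\epsilon, M]$ and in $k \in \{n-1, n\}$. For the tail $x \ge M$, the explicit factor $e^{-nx/2}$ in \eqref{info:weights} provides super-exponential decay in $nx$, which dominates the polynomial growth of $L_k^{(\alpha)}(nx)$ and gives a negligible contribution after integration. Combining these two regimes yields $\int_{4+\epsilon}^\infty K_n^{(\alpha)}(x,x) \, dx = O(e^{-cn})$, completing the reduction.

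The main technical obstacle is verifying that the exponential decay of the Airy function survives the various polynomial prefactors and the $\sqrt{n(n+\alpha)}$ and $n$ factors arising from the diagonal Christoffel--Darboux formula; given the uniformity of \eqref{eq:UnifSoftEdge} this amounts to careful bookkeeping rather than a genuinely new difficulty. The tail region $[M, \infty)$ is controlled directly by the Gaussian-type weight and presents no substantive issue.
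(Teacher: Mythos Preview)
Your approach is correct and is genuinely different from the paper's, and in fact more economical. The paper splits the domain into a near-diagonal strip $\Sigma_\delta = \{|x-y|<\delta\}$ and its complement; on the complement $F$ is bounded and Proposition~\ref{prop:Double} applies, while on $\Sigma_\delta$ the paper bounds $K_n^{(\alpha)}(x,y)^2$ pointwise via a mean-value-theorem argument and the recurrence \eqref{eq:psiDerivativeRecurrence}, obtaining a majorant independent of $n$ and then applying dominated convergence in $\delta$. Your Cauchy--Schwarz inequality $K_n^{(\alpha)}(x,y)^2 \le K_n^{(\alpha)}(x,x)K_n^{(\alpha)}(y,y)$ sidesteps this decomposition entirely and reduces the two-variable problem to a single diagonal integral, which is conceptually cleaner and avoids the appeal to Proposition~\ref{prop:Double}. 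The paper's route has the mild advantage of reusing already-proved machinery; yours has the advantage of being self-contained and shorter.

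One point of imprecision worth tightening: in the tail $x\ge M$ you say $e^{-nx/2}$ ``dominates the polynomial growth of $L_k^{(\alpha)}(nx)$'', but $L_k^{(\alpha)}$ has degree $k\in\{n-2,n-1,n\}$, so this is not growth of a fixed polynomial. The intended estimate is $|L_n^{(\alpha)}(nx)|\le C(nx)^n/n!\sim C(ex)^n/\sqrt{n}$, whence $e^{-nx/2}|L_n^{(\alpha)}(nx)|\le C(xe^{1-x/2})^n$, which indeed decays in $n$ once $M$ is large enough that $xe^{1-x/2}<1$. Alternatively (and more simply), the paper's asymptotic \eqref{eq:UnifSoftEdge} is stated uniformly for all $x\ge 2$, so the Airy bound already covers $[4+\epsilon,\infty)$ without splitting off a tail; cf.\ the paper's derivation of \eqref{eq:psiExpBound} and \eqref{eq:gammaBound}, which together give an integrable-in-$x$ majorant with the full factor $\exp(-(4\tilde n+2\alpha+2)\gamma(t))$ in the exponent. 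Either route closes the argument.
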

\begin{proof}
Define $\Sigma_{\delta} = \{(x,y)\ |\ x \in B(y, \delta), y\in [4+\epsilon, \infty)\}$ and $\Sigma_\delta^c = [4+\epsilon,\infty)^2\setminus \Sigma_{\delta}$. We want to show that $K_n^{(\alpha)}(x,y)^2$ is in $L^1([4+\epsilon, \infty)^2)$, then H\"{o}lder's inequality with dominated convergence theorem would conclude the proof. 

We start with the soft-edge asymptotic \eqref{eq:UnifSoftEdge}. In particular, for $t \ge 1$, the term inside the Airy function is $-(\nn+2\alpha+2)^{\frac{2}{3}}\zeta(t) \ge 0$.  We also have the bound on the Airy function $\T{Ai}(\abs{x}) \le \exp(-\frac{2}{3}\abs{x}^{\frac{3}{2}})/(2\sqrt{\pi} \abs{x}^{\frac{1}{4}})$, (\cite{NIST} 9.7.15). Now note that given $\epsilon > 0$, for $n > N$ we have that $x \ge 4+\epsilon$ implies $t(x) \ge 1+\frac{\epsilon}{8}$. This gives the bound $\abs{1-t}^{-\frac{1}{4}} \le 2\abs{4-x}^{-\frac{1}{4}}$ for $x \ge 4+\epsilon$ and $n > N$. In particular, this implies for $x \ge 4+\epsilon$ and $n > N$, with the previous Airy bound:
  \begin{equation*}
    \abs*{\sqrt{n}\psi_{\nn}^{(\alpha)}(nx)} \le \frac{2\exp(-\frac{2}{3}(4\nn+2\alpha+2) \abs{\zeta(t)}^{\frac{3}{2}})}{\abs{4-(x-2)^2}^{\frac{1}{4}}}.
  \end{equation*}
From \eqref{eq:zetaDefs}, for $t \ge 1$ we have that $\gamma(t) := \frac{2}{3}\abs{\zeta(t)}^{\frac{3}{2}} = \frac{1}{2}[(t^2-t)^{\frac{1}{2}} - \arccosh(\sqrt{t})] \ge 0$. Note that the derivative $\gamma'(t) = \sqrt{\frac{t-1}{t}}$ is strictly increasing for $t > 1$. 
So we can further simplify our bound, for $x \ge 4+\epsilon$ and $n > N$,
  \begin{equation} \label{eq:psiExpBound}
    \abs*{\sqrt{n}\psi_{\nn}^{(\alpha)}(nx)}  \le \frac{\exp(-(4\nn+2\alpha+2) \gamma(t))}{\abs{x(1-t)}^{\frac{1}{4}}} \le \frac{\exp(-\gamma(t))}{\abs{4-(x-2)^2}^{\frac{1}{4}}}.
  \end{equation}
Now, for $x \ge 4+\epsilon$ and $n > N$ we can bound $\gamma(t)$ from below by a line with slope $\gamma'(1+\frac{\epsilon}{8})$: 
  \begin{equation} \label{eq:gammaBound}
    \gamma(t) \ge \sqrt{\frac{\epsilon/8}{1+\epsilon/8}} (x-4) + \Big(\gamma(1+\epsilon/8)- \sqrt{\frac{\epsilon/8}{1+\epsilon/8}} \epsilon\Big)\ge \gamma(1+\epsilon/8) > 0.
  \end{equation}
Let $C_\epsilon = -(\gamma(1+\epsilon/8)- \sqrt{\frac{\epsilon/8}{1+\epsilon/8}} \epsilon) >0$. Fixing $y$, $\abs{x - y} < \delta$ and $\delta$ much less than $\epsilon$. We can then use the recurrence \eqref{eq:psiDerivativeRecurrence} so that for $n > N$, and $x, y > 4+\epsilon$,
  \begin{equation} \label{eq:TempEq1}
  \begin{split}
    K_n^{(\alpha)}(x,y)^2 & = n(n+\alpha)\Big(\frac{\psi_{n-1}^{(\alpha)}(nx)\psi_{n}^{(\alpha)}(ny) - \psi_{n}^{(\alpha)}(nx)\psi_{n-1}^{(\alpha)}(ny)}{x-y}\Big)^2  \\
    & \le 2n^2\Big(\abs*{\psi_n^{(\alpha)}(ny)\sup_{\abs{x-y} < \delta} n\psi_{n-1}^{(\alpha)\prime}(nx)} + \abs*{\psi_{n-1}^{(\alpha)}(ny)\sup_{\abs{x-y} < \delta} n\psi_{n}^{(\alpha)\prime}(nx)}\Big)^2 \\
      & \le 2n^2\bigg(4\abs*{\psi_n^{(\alpha)}(ny)\sup_{\abs{x-y} < \delta} \psi_{n-1}^{(\alpha)}(nx)} + 2\abs*{\psi_n^{(\alpha)}(ny)\sup_{\abs{x-y} < \delta} \psi_{n-2}^{(\alpha)}(nx)}  \\
      & \qquad + 4\abs*{\psi_{n-1}^{(\alpha)}(ny)\sup_{\abs{x-y} < \delta} \psi_{n}^{(\alpha)}(nx)} + 2\abs*{\psi_{n-1}^{(\alpha)}(ny)\sup_{\abs{x-y} < \delta} \psi_{n-1}^{(\alpha)}(nx)}\bigg)^2.
  \end{split}
  \end{equation}
Applying \eqref{eq:psiExpBound}, for $x, y \ge 4+\epsilon$ and $n > N$, \eqref{eq:TempEq1} is bounded above by,
  \begin{equation*}
   2\Big(C \frac{\exp(-\gamma(t(y-\delta))}{{\abs{4-(y-\delta-2)^2}^{\frac{1}{4}}}}\Big)^4 \le 2C^4 \frac{\exp(-\gamma(t(y-\delta))}{{\abs{4-(y-\delta-2)^2}}} \le \frac{1}{\epsilon} C^4\exp(-\gamma(t(y-\delta))),
  \end{equation*}
where $C>0$ is some positive constant. Now we use the linear lower bound for $\gamma(t)$ from \eqref{eq:gammaBound} along with the fact that $t(y)$ is bounded below by $\frac{y}{4} - \delta$ for $n > N$:
  \begin{equation*}
  \begin{split}
   \frac{1}{\epsilon} C\exp(-\gamma(t(y-\delta))) & \le \frac{1}{\epsilon} C\exp\Big(-\gamma'\Big(1+\frac{\epsilon}{8}\Big)\Big(\frac{y-\delta}{4} - \delta\Big)\Big) \exp(C_\epsilon) \\
    & \le \frac{1}{\epsilon} C\exp\Big(-\gamma'\Big(1+\frac{\epsilon}{8}\Big)\frac{y}{4}\Big) \exp\Big(\gamma'\Big(1+\frac{\epsilon}{8}\Big)\frac{5\delta}{4}+C_\epsilon\Big).
  \end{split}
  \end{equation*}
So for $n > N$ and $\delta$ much less than $\epsilon$, we get:
  \begin{equation*}
  \begin{split}
   & \ \ \ \ \iint_{\Sigma_\delta} F(x, y) \Phi_n^{(\alpha)}(x, y) dx dy \\
   & \le \frac{C}{\epsilon} \iint_{\Sigma_\delta} (f(x)-f(y))^2 \exp\Big(-\gamma'\Big(1+\frac{\epsilon}{8}\Big)\frac{y}{4}\Big)  \exp\Big(\gamma'\Big(1+\frac{\epsilon}{8}\Big)\frac{5\delta}{4}+C_\epsilon\Big) dx dy \\
   & \le \frac{4\norm{f}_\infty^2}{\epsilon} \iint_{\Sigma_\delta} C\exp\Big(-\gamma'\Big(1+\frac{\epsilon}{8}\Big)\frac{y}{4}\Big) \exp\Big(\gamma'\Big(1+\frac{\epsilon}{8}\Big)+C_\epsilon\Big) dx dy \\
   & < \infty.
  \end{split}
  \end{equation*}
So by the dominated convergence theorem, we conclude that uniformly in $n$ for $n > N$,
  \begin{equation*}
   \lim_{\delta \to 0} \iint_{\Sigma_\delta} F(x, y) \Phi_n^{(\alpha)}(x, y) dx dy = 0.
  \end{equation*}
Thus given $\nu > 0$ we can find $\delta > 0$ such that 
  \begin{equation} \label{eq:deltaBound}
   \abs*{\iint_{\Sigma_\delta} F(x, y) \Phi_n^{(\alpha)}(x, y) dx dy} < \frac{\nu}{2}.
  \end{equation}
In $\Sigma_\delta^c$ we have that $\abs{x-y} \ge \delta$ and hence $F(x, y)$ is $L^\infty$. Therefore by Proposition \ref{prop:Double}, we can find $n > N$ such that:
  \begin{equation} \label{eq:deltaCBound}
    \abs*{\iint_{\Sigma_\delta^c} F(x, y) \Phi_n^{(\alpha)}(x, y) dx dy} < \frac{\nu}{2}.
  \end{equation}
Adding \eqref{eq:deltaBound} and \eqref{eq:deltaCBound} we conclude that for $n > N$,
  \begin{equation*}
    \abs*{\int_{4+\epsilon}^\infty \int_{4+\epsilon}^\infty F(x, y) \Phi_n^{(\alpha)}(x, y) dx dy} < \nu.
  \end{equation*}
\end{proof}

\subsection{Proof of Theorem \ref{thm:VarConvergence}}
The proof simply combines Propositions \ref{prop:Double}, \ref{prop:BulkVarianceConvergence}, and \ref{prop:outsideConvergenceVariance} using some slight manipulations of $\epsilon$.
\begin{proof}
Let $\nu > 0$ be given. Let $f\in L^\infty([0,\infty))$ such that there exists $\epsilon > 0$ satisfying assumption \eqref{eq:Assumption1}. Then from Proposition \ref{prop:BulkVarianceConvergence} there exists $N$ such that for $n > N$,
  \begin{equation*}
    \T{Var}^\epsilon(\mathcal{N}_n^\circ[f]) := \int_0^{4+\epsilon} \int_0^{4+\epsilon} F(x, y)\Phi_n^{(\alpha)}(x, y) dx dy.
  \end{equation*}
is uniformly bounded in $n > N$ by some constant $C$. As $\Phi_n^{(\alpha)}$ is bounded uniformly for $n > N$ by $\bar{\Xi}(x, y)$ \eqref{eq:BarXiDefs}, condition \eqref{eq:Assumption1} tells us we can find $0 < \epsilon_1 < \epsilon$ so that:
  \begin{equation} \label{eq:OuterEdges}
    \T{Var}^{\epsilon_1}(\mathcal{N}_n^\circ[f]) - \T{Var}^{0}(\mathcal{N}_n^\circ[f]) = \iint_{[0, 4+\epsilon_1)^2 \setminus [0,4]^2} F(x, y)\Phi_n^{(\alpha)}(x, y) dx dy < \frac{\nu}{4}.
  \end{equation}
By Proposition \ref{prop:BulkVarianceConvergence} for $n > N$ we have,
  \begin{equation} \label{eq:Bulks}
  \abs*{\int_0^4 \int_0^4 F(x, y)\Phi_n^{(\alpha)}(x, y) dx dy - \int_0^4\int_0^4 F(x, y)\Xi(x, y) dx dy} < \frac{\nu}{4}.
  \end{equation}
Finally from Proposition \ref{prop:outsideConvergenceVariance} we can find $N$ so that $n > N$: 
  \begin{equation} \label{eq:Outsides}
    \int_{4+\frac{\epsilon_1}{2}}^\infty \int_{4+\frac{\epsilon_1}{2}}^\infty F(x, y)\Phi_n^{(\alpha)}(x, y) dx dy < \frac{\nu}{4}.
  \end{equation}
We choose $\Omega_3 = [0, \infty)^2\setminus ([0, 4+\epsilon_1)^2 \cup [4+\frac{\epsilon_1}{2}, \infty)^2)$. Essentially we are working with regions $\Omega_1^{\epsilon_1}$ and $\Omega_2^{\epsilon_1/2}$ as defined in the beginning of Section \ref{sec:Thm1Proof}. Note that $F(x, y)$ is $L^\infty$ in $\Omega_3$. So Proposition \ref{prop:Double} tells us for $n > N$:
  \begin{equation} \label{eq:CornerRectangles}
    \iint_{\Omega_3} F(x, y)\Phi_n^{(\alpha)}(x, y) dx dy < \frac{\nu}{4}.
  \end{equation}
Adding \eqref{eq:OuterEdges}, \eqref{eq:Bulks}, \eqref{eq:Outsides}, and \eqref{eq:CornerRectangles}, we see that for $n > N$:
  \begin{equation}
    \abs*{\int_0^\infty \int_0^\infty F(x, y)\Phi_n^{(\alpha)}(x, y) dx dy - \int_0^4\int_0^4 F(x, y)\Xi(x, y) dx dy} < \nu.
  \end{equation}
\end{proof}

\section{Approximation by Chebyshev polynomials (proof of Lemma \ref{lem:VLUEApprox})}  \label{sec:Lem1Proof}
The key idea is that we can show the Chebyshev-$\dot{H}^\frac{1}{2}$ norm \eqref{defs:ChebyshevNorm} of an $L^\infty$ function is finite iff $V_{\T{GUE}}[f]$ \eqref{eq:GUEVar} is finite. This is stated more concretely in Lemma \ref{lem:seminormsEquiv}. To show this claim we follow a proof similar to that of (\cite{Lieb2001}, Thm. 7.12) where instead of the Poisson kernel, for $\dot{H}^\frac{1}{2}(\RR)$ we use the one corresponding to Chebyshev polynomials of the first kind. We also note that up to change of variables, $V_{\T{LUE}} = V_{\T{GUE}}$. As a result, condition \eqref{eq:Assumption1} is sufficient for us to use a Chebyshev approximation. 
\subsection{Definitions and relevant background}
Recall the limiting variance of the centred linear spectral statistic for the GUE (\cite{Pastur2011}, Remark 2.1):
  \begin{equation}\label{eq:GUEVar}
      V_{\T{GUE}}[f] := \int_{-2}^2 \int_{-2}^2\Big(\frac{f(x)-f(y)}{x-y}\Big)^2 \frac{4-xy}{4\pi^2 \sqrt{4-x^2}\sqrt{4-y^2}} dx dy.
  \end{equation}
Now let us define the operator:
  \begin{equation} \label{defs:KOperator}
   Kf(x) := \int_{-2}^2 \frac{f(x)-f(y)}{(x-y)^2} \frac{4-xy}{2\pi\sqrt{4-y^2}} dy,
  \end{equation}
where the integral is evaluated in principal value. Let $L_w^2([-2,2])$ be the weighted function space defined by the inner product,
  \[\inn{f}{g}_w = \int_{-2}^2 \frac{f(x)g(x)}{2\pi\sqrt{4-x^2}} dx.\]
Let $T_n$ be the Chebyshev polynomials of the first kind, which satisfies the relations:
  \begin{equation} \label{eq:recChebyshev}
   T_{n+1}(x) = 2xT_n(x) - T_{n-1}(x),\ \ T_n(\cos\theta) = \cos n\theta.
  \end{equation}
It will be convenient to define $P_n(x) = T_n(x/2)$. In particular the first few terms are $P_0(x) = 1$, $P_1(x) = x/2$, and $P_2(x) = \frac{x^2}{2}-1$. They also satisfy the orthogonality relation,
  \begin{equation} \label{eq:orthoChebyshev}
   \int_{-2}^2 \frac{P_n(x)P_m(x)}{\pi \sqrt{4-x^2}} dx = \begin{cases}                                         0 & n \ne m \\                                    1 & n = m = 0 \\                                                              \frac{1}{2} & n = m \ne 0                                                           \end{cases}
  \end{equation}
We define the Chebyshev-$\dot{H}^\frac{1}{2}$ norm by:
  \begin{equation} \label{defs:ChebyshevNorm}
        \norm{f}_{\dot{H}^\frac{1}{2}_C} := \sum_{k=1}^\infty na_n^2, \T{ where } a_n = \int_{-2}^2 \frac{2f(x) P_n(x)}{\pi\sqrt{4-x^2}} dx.
  \end{equation}
We will also need the following identity (evaluated in principal value):
  \begin{equation} \label{eq:pvIdentity}
    \int_{-2}^2 \frac{1}{x-y} \frac{1}{\sqrt{4-y^2}} dy = 0.
  \end{equation}
We now show some properties of $K$ that will be useful. We follow a method similar to Lemma 3.1 in \cite{Yau2015}) to prove the following:
\begin{lemma} \label{lem:KKernal}
 $KP_n = \frac{n}{2}P_n$ and in particular the kernel of $e^{-tK}$ is given by
  \begin{equation*}
  \begin{split}
   K_t(x,y) & = 2 + 4\sum_{n=1}^\infty  e^{-\frac{tn}{2}} P_n(x)P_n(y) \\
    & = \frac{1-e^{-t}}{1+e^{-t}-2\cos(\theta+\phi)e^{-\frac{t}{2}}} + \frac{1-e^{-t}}{1+e^{-t}-2\cos(\theta-\phi)e^{-\frac{t}{2}}},
  \end{split}
  \end{equation*}
where $\frac{x}{2} = \cos\theta, \frac{y}{2} = \cos\phi$. . 
\end{lemma}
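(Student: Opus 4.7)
The plan is to first establish the eigenvalue identity $KP_n = \tfrac{n}{2}P_n$ by reducing the defining principal-value integral to a classical Fej\'er-kernel evaluation, and then to read off the kernel of $e^{-tK}$ by spectral expansion.

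For the eigenvalue identity, I would substitute $x = 2\cos\theta$ and $y = 2\cos\phi$ with $\theta,\phi\in[0,\pi]$, so that $P_n(x)=\cos n\theta$, $P_n(y)=\cos n\phi$, and $\frac{dy}{2\pi\sqrt{4-y^2}} = \frac{d\phi}{2\pi}$. The product-to-sum identities
\[
(x-y)^2 = 16\sin^2\tfrac{\theta-\phi}{2}\sin^2\tfrac{\theta+\phi}{2},\qquad 4-xy = 4\Big[\sin^2\tfrac{\theta-\phi}{2}+\sin^2\tfrac{\theta+\phi}{2}\Big],
\]
then split $(4-xy)/(x-y)^2$ as $\tfrac{1}{4}\big[\sin^{-2}\tfrac{\theta-\phi}{2} + \sin^{-2}\tfrac{\theta+\phi}{2}\big]$, reducing the operator action to
\[
KP_n(x) = \frac{1}{8\pi}\int_0^\pi (\cos n\theta - \cos n\phi)\Big[\sin^{-2}\tfrac{\theta-\phi}{2} + \sin^{-2}\tfrac{\theta+\phi}{2}\Big]d\phi.
\]
Reflecting $\phi\mapsto-\phi$ in the second summand fuses the two pieces into a single principal-value integral over $(-\pi,\pi)$ against $\sin^{-2}\tfrac{\theta-\phi}{2}$, and a translation $\alpha=\phi-\theta$ together with $2\pi$-periodicity gives
\[
KP_n(x) = \frac{1}{8\pi}\,\T{p.v.}\!\int_{-\pi}^\pi \frac{\cos n\theta - \cos n(\theta+\alpha)}{\sin^2(\alpha/2)}\,d\alpha.
\]
Expanding $\cos n(\theta+\alpha)$, the $\sin n\theta\sin n\alpha$ piece is odd in $\alpha$ and principal-value cancels, while the rest equals $2\cos n\theta\cdot\sin^2(n\alpha/2)/\sin^2(\alpha/2)$, i.e.\ the (integrable) Fej\'er kernel. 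Using the classical evaluation $\int_{-\pi}^\pi (\sin(n\alpha/2)/\sin(\alpha/2))^2 d\alpha = 2\pi n$ yields $KP_n = \tfrac{n}{2}P_n$.

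For the kernel of $e^{-tK}$, note that $K$ is self-adjoint on $L^2_w([-2,2])$ and $\{P_n\}_{n\ge 0}$ is a complete orthogonal basis with $\inn{P_0}{P_0}_w = \tfrac12$ and $\inn{P_n}{P_n}_w = \tfrac14$ for $n\ge 1$ by \eqref{eq:orthoChebyshev}. Spectral decomposition with respect to the measure $\frac{dy}{2\pi\sqrt{4-y^2}}$ gives
\[
K_t(x,y) = \sum_{n=0}^\infty \frac{e^{-tn/2}}{\inn{P_n}{P_n}_w} P_n(x)P_n(y) = 2 + 4\sum_{n=1}^\infty e^{-tn/2} P_n(x)P_n(y).
\]
The closed form follows from the product-to-sum identity $P_n(x)P_n(y) = \tfrac{1}{2}[\cos n(\theta-\phi) + \cos n(\theta+\phi)]$ followed by the Poisson-kernel evaluation $1+2\sum_{n\ge 1}r^n\cos n\alpha = (1-r^2)/(1+r^2-2r\cos\alpha)$ with $r = e^{-t/2}$, which reproduces exactly the two Poisson terms in the statement.

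The main obstacle is Step 1: tracking the principal-value structure faithfully through the reflection and translation, and verifying that the odd piece truly cancels rather than contributing a boundary term. Once the Fej\'er reduction is in place, the rest is straightforward spectral theory together with the classical Poisson-kernel summation.
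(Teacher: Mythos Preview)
Your proof is correct, and the kernel computation via spectral expansion and the Poisson summation $1+2\sum_{n\ge 1}r^n\cos n\alpha = (1-r^2)/(1+r^2-2r\cos\alpha)$ is essentially what the paper does as well. Where you genuinely diverge is in the eigenvalue identity $KP_n=\tfrac{n}{2}P_n$. The paper argues by induction on $n$ using the Chebyshev recurrence $P_{n+1}(x)=xP_n(x)-P_{n-1}(x)$; the inductive step requires an auxiliary identity $\int_{-2}^2 \frac{P_n(x)-P_n(y)}{x-y}\,\frac{4-xy}{2\pi\sqrt{4-y^2}}\,dy = P_{n-1}(x)$, which is itself proved by a second induction. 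Your route instead performs the trigonometric substitution immediately, recognizes $(4-xy)/(x-y)^2 = \tfrac14\big[\sin^{-2}\tfrac{\theta-\phi}{2}+\sin^{-2}\tfrac{\theta+\phi}{2}\big]$, and reduces the whole computation to the Fej\'er evaluation $\int_{-\pi}^{\pi}\sin^2(n\alpha/2)/\sin^2(\alpha/2)\,d\alpha=2\pi n$. This is shorter and more conceptual: the eigenvalue $n/2$ is visibly the Fej\'er kernel's total mass, and the argument makes the connection with singular integrals on the circle explicit. The paper's double induction is more self-contained (it needs only the recurrence and the single principal-value identity \eqref{eq:pvIdentity}) but obscures why the eigenvalue is linear in $n$. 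Your tracking of the principal value through the reflection $\phi\mapsto -\phi$ and the periodic translation is sound: for $\theta\in(0,\pi)$ the reflected piece carries no interior singularity, and the shift to $(-\pi,\pi)$ moves only a regular portion of the integrand, so the only principal value is at $\alpha=0$ and the odd term drops out as you claim.
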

\begin{proof} We will use induction. The initial case $KP_0(x) = 0$ is clear. From \eqref{eq:pvIdentity} we also have:
  \begin{equation*}
    KP_1(x) = \frac{4-x^2}{4\pi} \int_{-2}^2 \frac{1}{x-y} \frac{1}{\sqrt{4-y^2}} dy + \frac{x}{4\pi} \int_{-2}^2 \frac{1}{\sqrt{4-y^2}} dy= \frac{1}{2}P_1(x).
  \end{equation*}
Now assuming that $KP_n = \frac{n}{2}P_n$ holds up to $n$, using the recurrence \eqref{eq:recChebyshev} we get:
  \begin{equation*}
  \begin{split}
    KP_{n+1}(x) & = \int_{-2}^2 \frac{xP_n(x) - P_{n-1}(x) - (yP_n(y) - P_{n-1}(y))}{(x-y)^2} \frac{4-xy}{2\pi\sqrt{4-y^2}}dy \\
      & = xKP_n(x) - KP_{n-1}(x) + \int_2^2 \frac{P_n(y)}{x-y} \frac{4-xy}{2\pi\sqrt{4-y^2}}dy \\
      & = \frac{n+1}{2}xP_n(x) - \frac{n-1}{2}P_{n-1}(x) - \int_{-2}^2 \frac{P_n(x) - P_n(y)}{x-y} \frac{4-xy}{2\pi\sqrt{4-y^2}}dy \\
      & = \frac{n+1}{2}xP_n(x) - \frac{n+1}{2}P_{n-1}(x) = \frac{n+1}{2}P_{n+1}(x).
  \end{split}
  \end{equation*}
The second-last equality is nontrivial. For $n \ge 1$, we need to show that,
  \begin{equation} \label{eq:RecPol1}
   \int_{-2}^2 \frac{P_n(x) - P_n(y)}{x-y} \frac{4-xy}{2\pi\sqrt{4-y^2}}dy = P_{n-1}(x).
  \end{equation}
We will once again use induction here. In particular we have two base cases which both follow from \eqref{eq:orthoChebyshev}:
  \begin{equation*}
    \int_{-2}^2 \frac{P_1(x)-P_1(y)}{x-y} \frac{4-xy}{2\pi\sqrt{4-y^2}}dy = \int_{-2}^2 \frac{4-xy}{4\pi\sqrt{4-y^2}}dy =P_0(x).
  \end{equation*}
And similarly,
  \begin{equation*}
    \int_{-2}^2 \frac{P_2(x)-P_2(y)}{x-y} \frac{4-xy}{2\pi\sqrt{4-y^2}}dy = \int_{-2}^2 (x+y)\frac{4-xy}{4\pi\sqrt{4-y^2}}dy =P_1(x).
  \end{equation*}
Suppose \eqref{eq:RecPol1} holds up to $n \ge 2$, then:
  \begin{equation*}
  \begin{split}
    & \ \ \ \ \int_{-2}^2 \frac{P_{n+1}(x) - P_{n+1}(y)}{x-y} \frac{4-xy}{2\pi\sqrt{4-y^2}}dy \\
    & = x\int_{-2}^2 \frac{P_n(x) - P_n(y)}{x-y} \frac{4-xy}{2\pi\sqrt{4-y^2}}dy + \int_{-2}^2 P_n(y) \frac{4-xy}{2\pi\sqrt{4-y^2}}dy  \\
      & \qquad - \int_{-2}^2 \frac{P_{n-1}(x) - P_{n-1}(y)}{x-y}\frac{4-xy}{2\pi\sqrt{4-y^2}}dy \\
      & = xP_{n-1}(x) - P_{n-2}(x) + \int_{-2}^2 P_n(y) \frac{4-xy}{2\pi\sqrt{4-y^2}}dy \\
      & = xP_{n-1}(x) - P_{n-2}(x) = P_{n}(x),
  \end{split}
  \end{equation*}
where for the second-last equality we used the orthogonal property of $P_n(x)$ \eqref{eq:orthoChebyshev}, as $n \ge 2$. Hence, we have shown that $KP_n(x) = \frac{n}{2}P_{n}(x)$. 

Now, note that $(e^{-tK}P_n)(x) = e^{-\frac{tn}{2}}P_n(x)$. Hence the kernel of $e^{-tK}$ is 
  \[K_t(x, y) = 2 + 4\sum_{n=1}^\infty e^{-\frac{tn}{2}}P_n(x)P_n(y).\]
Using the equivalent definition of Chebyshev polynomials in terms of $\cos$ \eqref{eq:recChebyshev} and substituting de Moivre's identity $(\cos x + i\sin x)^n = \cos nx + i \sin nx$, we get that:
  \begin{equation*}
  \begin{split}
   K_t(x,y) = \frac{1-e^{-t}}{1+e^{-t}-2\cos(\theta+\phi)e^{-\frac{t}{2}}} + \frac{1-e^{-t}}{1+e^{-t}-2\cos(\theta-\phi)e^{-\frac{t}{2}}}.
  \end{split}
  \end{equation*}
where $\frac{x}{2} = \cos\theta, \frac{y}{2} = \cos\phi$.
\end{proof}

We will also need the following fact about the kernel $K_t$:
\begin{lemma} \label{lem:KKernalNegative} The kernel defined in Lemma \ref{lem:KKernal} grows slower than $t$. That is to say,
  \begin{equation*}
    \frac{K_t(x, y)}{t}
  \end{equation*}
is strictly decreasing for $t > 0$.
\end{lemma}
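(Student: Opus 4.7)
The plan is to reduce the claim to a one-variable monotonicity statement by simplifying each of the two summands that make up $K_t(x,y)$. Writing $\frac{x}{2} = \cos\theta$, $\frac{y}{2} = \cos\phi$, and setting $\alpha = \theta \pm \phi$, each summand in Lemma \ref{lem:KKernal} has the form
  \[\frac{1-e^{-t}}{1+e^{-t} - 2\cos\alpha\cdot e^{-t/2}}.\]
Multiplying numerator and denominator by $e^{t/2}$ and using $2\sinh u = e^u - e^{-u}$, $2\cosh u = e^u + e^{-u}$, this collapses to
  \[g_\alpha(t) := \frac{\sinh(t/2)}{\cosh(t/2) - \cos\alpha}.\]
Thus $K_t(x,y) = g_{\theta+\phi}(t) + g_{\theta-\phi}(t)$, and since a sum of strictly decreasing functions is strictly decreasing, it suffices to show that $t \mapsto g_\alpha(t)/t$ is strictly decreasing on $(0,\infty)$ for every $\alpha \in \RR$, i.e., for every $c := \cos\alpha \in [-1,1]$.

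The second step is a direct derivative computation. Differentiating the quotient gives
  \[g_\alpha'(t) = \frac{1 - c\cosh(t/2)}{2(\cosh(t/2) - c)^2},\]
after using the identity $\cosh^2 - \sinh^2 = 1$. Hence $\frac{d}{dt}\bigl[g_\alpha(t)/t\bigr] < 0$ is equivalent to $t\, g_\alpha'(t) - g_\alpha(t) < 0$, and a short manipulation reduces this to showing that
  \[N(u,c) := \bigl(u - \sinh u \cosh u\bigr) + c\bigl(\sinh u - u\cosh u\bigr) < 0 \qquad \text{for all } u > 0, \ c \in [-1,1],\]
where $u = t/2$.

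The final step exploits that $N(u,c)$ is linear in $c$, so on the interval $c \in [-1,1]$ its maximum is attained at one of the endpoints. Direct factorization gives
  \[N(u,1) = (u + \sinh u)(1 - \cosh u), \qquad N(u,-1) = (u - \sinh u)(1 + \cosh u),\]
and both expressions are manifestly negative for $u > 0$ since $\cosh u > 1$ and $\sinh u > u$. This yields the strict monotonicity of $g_\alpha(t)/t$, and hence of $K_t(x,y)/t$.

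I expect the main obstacle to be locating the correct reduction: the derivative $(K_t/t)'$ is a priori a somewhat unpleasant expression of $\theta,\phi,t$, and the decisive observation is that after the simplification to $g_\alpha$ the relevant numerator $N(u,c)$ is linear in $c = \cos\alpha$, so only the two extreme values $c = \pm 1$ need to be checked. Without this linearity observation, one is tempted to try to combine the two summands $g_{\theta+\phi}$ and $g_{\theta-\phi}$ before differentiating, which leads to much messier algebra.
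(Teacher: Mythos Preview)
Your proof is correct and follows essentially the same strategy as the paper: reduce to a single summand (for $\alpha = \theta \pm \phi$), exploit that the relevant expression is affine in $\cos\alpha$, and verify the sign at the extreme values of $\cos\alpha$. The paper carries this out in the exponential variables, first showing the coefficient of $\cos(\theta\pm\phi)$ is nonnegative (so that only $\cos = -1$ needs checking) and then factoring the resulting expression as $(1 - te^{-t/2} - e^{-t})(1 + e^{-t/2})^2$.

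Your execution is cleaner: the substitution $g_\alpha(t) = \sinh(t/2)/(\cosh(t/2) - \cos\alpha)$ collapses the algebra, and checking both endpoints $c = \pm 1$ directly is slightly more economical than the paper's intermediate step of bounding the coefficient of $\cos\alpha$. The factorizations $N(u,1) = (u+\sinh u)(1-\cosh u)$ and $N(u,-1) = (u-\sinh u)(1+\cosh u)$ make the sign transparent, whereas the paper's endpoint inequality $1 - te^{-t/2} - e^{-t} \ge 0$ still requires a small argument. Conceptually, though, the two proofs are the same.
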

\begin{proof}
 We take the derivative of $K_t(x, y)$ with respect to $t$:
  \begin{equation*}
    \frac{d}{dt} \frac{K_t(x,y)}{t}  = \frac{-1}{t^2}\Big(K_t(x,y) + \frac{t}{2} \Big(4\sum_{n=1}^\infty ne^{-\frac{tn}{2}}P_n(x)P_n(y)\Big)\Big).
  \end{equation*}
It would suffice to show that for $t > 0$:
  \begin{equation*}
   K_t(x,y) + \frac{t}{2} \Big(4\sum_{n=1}^\infty ne^{-\frac{tn}{2}}P_n(x)P_n(y)\Big) > 0.
  \end{equation*}
Note that $\sum_{n=1}^\infty nx^n = \frac{x}{(1-x)^2}$. So employing the same technique as before of expanding $P_n(x)$ using de Moivre's formula, and substituting $\frac{x}{2} = \cos\theta$ and $\frac{y}{2} = \cos\phi$:
  \begin{equation*}
    \frac{t}{2} \Big(4\sum_{n=1}^\infty ne^{-\frac{tn}{2}}P_n(x)P_n(y)\Big) = \sum_{n=0, 1}\frac{t}{2} \Big(\frac{2\cos(\theta + (-1)^n\phi)(e^{-\frac{t}{2}} + e^{-\frac{3t}{2}}) - 4e^{-t}}{(1+e^{-t} - 2\cos(\theta + (-1)^n\phi)e^{-\frac{t}{2}})^2}\Big).
  \end{equation*}
We compare $K_t(x, y)$ and the above sum term-by-term using the second representation of $K_t(x,y)$ from Lemma \ref{lem:KKernal}. In particular we want to show that
  \begin{equation} \label{eq:DerivativeKernal}
   \frac{1-e^{-t}}{1+e^{-t}-2\cos(\theta\pm\phi)e^{-\frac{t}{2}}} + \frac{t}{2}\frac{2\cos(\theta \pm \phi)(e^{-\frac{t}{2}} + e^{-\frac{3t}{2}}) - 4e^{-t}}{(1+e^{-t} - 2\cos(\theta \pm \phi)e^{-\frac{t}{2}})^2} > 0.
  \end{equation}
Combining the terms of \eqref{eq:DerivativeKernal} so that the denominator is a square, it suffices to show, for $t \ge 0$, that the numerator is positive. This is to say we want to show:
  \begin{equation*}
    1-e^{-2t} - 2t + (2e^{-\frac{3t}{2}} - 2e^{-\frac{t}{2}} + te^{-\frac{t}{2}} + te^{-\frac{3t}{2}})\cos(\theta \pm \phi) \ge 0,
  \end{equation*}
with equality only if $t = 0$. Note that:
  \begin{equation*}
    2e^{-\frac{3t}{2}} - 2e^{-\frac{t}{2}} + te^{-\frac{t}{2}} + te^{-\frac{3t}{2}} = e^{-\frac{t}{2}}(2e^{-t} - 2 + t + te^{-t}) \ge 0.
  \end{equation*}
So we see that,
  \begin{equation*}
   1-e^{-2t} - 2t + (2e^{-\frac{3t}{2}} - 2e^{-\frac{t}{2}} + te^{-\frac{t}{2}} + te^{-\frac{3t}{2}})\cos(\theta \pm \phi) \ge (1-te^{-\frac{t}{2}} - e^{-t})(1+e^{-\frac{t}{2}})^2.
  \end{equation*}
Since $1-te^{-\frac{t}{2}} - e^{-t} \ge 0$, with equality iff $t = 0$, we conclude:
  \begin{equation}
   K_t(x,y) + \frac{t}{2} \Big(4\sum_{n=1}^\infty ne^{-\frac{tn}{2}}P_n(x)P_n(y)\Big) > 0
  \end{equation}
for $t > 0$. The claim follows directly.
\end{proof}

\subsection{Equivalence of Chebyshev-\texorpdfstring{$\dot{H}^\frac{1}{2}$}{H1/2} and \texorpdfstring{$V_{\T{GUE}}$}{VGUE}}
We note that if $f\in L^2_w([-2, 2])$ then its Chebyshev series converges to $f$ in $L^2_w$. To see this use the change of variable $x = 2\cos\theta$ to get:
  \begin{equation*}
    \int_{-2}^2 \frac{f(x)^2}{\sqrt{4-x^2}}dx = \int_0^\pi f(2\cos\theta)^2 d\theta.
  \end{equation*}
Hence $f(x)\in L^2_w([-2, 2])$ is equivalent to saying $f(2\cos\theta)\in L^2([0, \pi])$. We can express the coefficients, for $n \ge 1$,
  \begin{equation*}
   a_n(f) = \int_{-2}^2 \frac{2f(x)P(x)}{\pi \sqrt{4-x^2}} dx = \frac{2}{\pi}\int_0^\pi f(2\cos\theta) \cos n\theta d\theta.
  \end{equation*}
In particular, $2a_0 + \sum_{n=1}^\infty a_n\cos(n\theta)$ is the Fourier cosine series of $f(2\cos\theta)$ and hence converges to $f(2\cos\theta)$ in $L^2$. Hence $2a_0 + \sum_{n=1}^\infty a_nP_n(x)$ converges to $f(x)$ in $L^2_w$. Now we are ready to prove the following:
\begin{lemma} \label{lem:seminormsEquiv}
  For $f\in L^\infty([-2, 2])$, the $V_{\T{GUE}}$ seminorm and Chebyshev-$\dot{H}^\frac{1}{2}$ seminorm are equivalent in the sense that:
    \begin{equation}
      4 V_{\T{GUE}}[f] = \norm{f}_{\dot{H}^\frac{1}{2}_C}
    \end{equation}
\end{lemma}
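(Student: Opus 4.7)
The plan is to compute the quadratic form $\langle (I - e^{-tK})f, f\rangle_w$ in two different ways and pass to the limit $t \to 0^+$. Expanding in the Chebyshev basis (where $K$ is diagonal by Lemma \ref{lem:KKernal}) produces a series in the coefficients $a_n$ that yields $\tfrac{1}{4}\|f\|_{\dot{H}^{1/2}_C}$ in the limit; symmetrizing the integral representation of $e^{-tK}$ in $x,y$ produces a double integral whose kernel converges monotonically (by Lemma \ref{lem:KKernalNegative}) to the singular kernel appearing in $V_{\T{GUE}}$. Equating the two limits gives the identity.

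First, since $f \in L^\infty([-2,2]) \subset L^2_w([-2,2])$, its Chebyshev series $f = \tfrac{a_0}{2} P_0 + \sum_{n\ge 1} a_n P_n$ converges in $L^2_w$ (equivalently, $f(2\cos\theta)$ is expanded in a Fourier cosine series on $[0,\pi]$). Because the operator defined by the kernel $K_t$ is self-adjoint on $L^2_w$ and satisfies $e^{-tK}P_n = e^{-tn/2}P_n$, Parseval's identity combined with the orthogonality relation \eqref{eq:orthoChebyshev} gives
\[
  \langle (I-e^{-tK})f, f\rangle_w = \sum_{n\ge 1} \frac{a_n^2 \,(1 - e^{-tn/2})}{4}.
\]
Multiplying by $2/t$ and using that $(1-e^{-at})/t$ is decreasing in $t > 0$ (elementary calculus, so that as $t\to 0^+$ the factor increases to $a$), monotone convergence yields
\[
  \lim_{t \to 0^+} \frac{2}{t}\langle (I-e^{-tK})f, f\rangle_w = \frac{1}{4}\sum_{n\ge 1} n\, a_n^2 = \frac{1}{4}\|f\|_{\dot{H}^{1/2}_C},
\]
possibly $+\infty$.

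Second, since $KP_0 = 0$ the kernel is a probability density in $y$: $\int K_t(x,y)(2\pi\sqrt{4-y^2})^{-1}\,dy = 1$. Writing $\langle f,f\rangle_w$ and $\langle e^{-tK}f, f\rangle_w$ as double integrals and symmetrizing in $x,y$ gives
\[
  \langle (I-e^{-tK})f, f\rangle_w = \frac{1}{2}\iint_{[-2,2]^2} K_t(x,y) \frac{(f(x)-f(y))^2 \, dx\, dy}{(2\pi)^2\sqrt{4-x^2}\sqrt{4-y^2}}.
\]
By Lemma \ref{lem:KKernalNegative}, $K_t(x,y)/t$ is decreasing in $t$, so it increases pointwise as $t\to 0^+$. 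A direct calculation with $x=2\cos\theta$, $y=2\cos\phi$ and $1-e^{-t}\sim t$ gives the pointwise limit
\[
  \lim_{t\to 0^+} \frac{K_t(x,y)}{t} = \frac{1}{4\sin^2\frac{\theta-\phi}{2}} + \frac{1}{4\sin^2\frac{\theta+\phi}{2}} = \frac{4-xy}{(x-y)^2},
\]
the last equality coming from $(x-y)^2/4 = (\cos\theta-\cos\phi)^2 = 4\sin^2\tfrac{\theta+\phi}{2}\sin^2\tfrac{\theta-\phi}{2}$ together with $1-\tfrac{xy}{4} = \sin^2\tfrac{\theta-\phi}{2}+\sin^2\tfrac{\theta+\phi}{2}$. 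Monotone convergence then yields
\[
  \lim_{t\to 0^+}\frac{2}{t}\langle (I-e^{-tK})f,f\rangle_w = V_{\T{GUE}}[f].
\]
Equating the two limits gives $4V_{\T{GUE}}[f] = \|f\|_{\dot H^{1/2}_C}$ (both possibly $+\infty$).

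The main obstacle is justifying the two limit exchanges without assuming a priori that either side is finite, and this is exactly where Lemma \ref{lem:KKernalNegative} is essential: it is precisely the monotonicity of $K_t/t$ in $t$ that allows monotone convergence on the integral side, paralleling the elementary monotonicity of $(1-e^{-at})/t$ that handles the series side. All other steps --- the Parseval identity, the symmetrization, and the trigonometric computation of the pointwise kernel limit --- are routine.
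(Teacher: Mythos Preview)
Your proof is correct and in spirit follows the paper's second half (the $I_t(f)=\tfrac{1}{t}\bigl(\langle f,f\rangle_w-\langle f,e^{-tK}f\rangle_w\bigr)$ computation together with Lemma~\ref{lem:KKernalNegative}), but it is organized more cleanly. The paper proves the two inequalities separately: for $\|f\|_{\dot H^{1/2}_C}\ge 4V_{\T{GUE}}[f]$ it invokes an integration-by-parts identity and Johansson's formula $V_{\T{GUE}}[f^N]=\tfrac18\sum_{n\le N} na_n^2$ together with Fatou's lemma; for the reverse inequality it uses the $I_t$ argument but only obtains $I_t(f)\ge \tfrac18\sum \tfrac{2(1-e^{-tn/2})}{t}a_n^2$ via an AM--GM step. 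You instead apply Parseval directly to the bounded operator $I-e^{-tK}$ to get the exact identity $\langle (I-e^{-tK})f,f\rangle_w=\tfrac14\sum a_n^2(1-e^{-tn/2})$, and you compute the pointwise limit $\lim_{t\to 0}K_t(x,y)/t=(4-xy)/(x-y)^2$ explicitly via the Poisson-type closed form of $K_t$; monotone convergence on each side (using the elementary monotonicity of $(1-e^{-at})/t$ and Lemma~\ref{lem:KKernalNegative} respectively) then gives equality in one stroke. This bypasses the Fatou/Johansson half of the paper's argument entirely, at the cost of needing to verify the trigonometric identity for the kernel limit---which you do correctly.
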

\begin{proof}
We first show that $\norm{f}_{\dot{H}^\frac{1}{2}_C}\ge 4 V_{\T{GUE}}[f]$. We begin by noting that for smooth $f(x)$, using integration by parts:
  \begin{equation*}
  \begin{split}
    -\int_{-2}^2 \frac{f'(y)\sqrt{4-y^2}}{y-x} dy & = \frac{-(f(y) - f(x))\sqrt{4-y^2}}{y-x}\bigg \vert_{-2}^2 + \int_{-2}^2 \frac{(f(x) - f(y))(4-xy)}{\sqrt{4-y^2}(x-y)^2} dy \\
    & = \int_{-2}^2 \frac{(f(x) - f(y))(4-xy)}{\sqrt{4-y^2}(x-y)^2} dy.
  \end{split}
  \end{equation*}
Hence, using symmetry in $x$ and $y$ we have that:
  \begin{equation*}
    -\frac{1}{4\pi^2}\int_{-2}^2\frac{f(x)}{\sqrt{4-x^2}}\int_{-2}^2 \frac{f'(y)\sqrt{4-y^2}}{y-x} dx dy = \frac{1}{8\pi^2}\int_{-2}^2\int_{-2}^2 F(x, y) \frac{4-xy}{\sqrt{4-x^2}\sqrt{4-y^2}} dxdy.
  \end{equation*}
Note that $\frac{d}{dy}P_n(y) = \frac{n}{2}U_{n-1}(y/2)$, $n \ge 0$ where $U_n$ is the $n$th Chebyshev polynomial of the second kind. In particular we have the identity:
  \begin{equation*}
    \frac{1}{2\pi}\int_{-2}^2 \frac{U_{n-1}(y/2) \sqrt{4-y^2}}{y-x} dy = -P_n(x).
  \end{equation*}
Then if we can write $f^N(x) = \sum_{n=0}^N a_n P_n(x)$, $\abs{x} \le 2$, we see that,
  \begin{equation*}
    -\frac{1}{4\pi^2}\int_{-2}^2\frac{f^N(x)}{\sqrt{4-x^2}}\int_{-2}^2 \frac{f^{N\prime}(y)\sqrt{4-y^2}}{y-x} dx dy = \frac{1}{8} \sum_{n=1}^N na_n^2.
  \end{equation*}
This is part of the well-known result of Johansson \cite{Johansson1998}. In particular, using Fatou's lemma we have the inequality,
  \begin{equation*}
  \begin{split}
    \liminf_{N} \frac{1}{8} \sum_{n=1}^N na_n^2 \ge \frac{1}{8\pi^2}\int_{-2}^2 \int_{-2}^2 \liminf_N \Big(\frac{\sum_{n=1}^Na_n (P_n(x) - P_n(y))}{x-y}\Big)^2 \frac{4-xy}{\sqrt{4-x^2}\sqrt{4-y^2}} dx dy.
  \end{split}
  \end{equation*}
So if $f(x) = \sum_{n=0}^\infty a_n P_n(x)$ then,
  \begin{equation} \label{eq:ChebyshevBigGUE}
    \sum_{n=1}^\infty na_n^2 \ge 4 V_{\T{GUE}}[f].
  \end{equation}
We now work on showing $4 V_{\T{GUE}}[f] \ge \norm{f}_{\dot{H}^\frac{1}{2}_C}$. For the remainder of the proof, for conciseness, we will use $\kappa_2(x) := \kappa(x+2)= 2\pi\sqrt{4-x^2}$ from \eqref{eq:kappaDefs}. Because of the orthogonality of $P_n$, we have:
  \begin{equation*}
   \int_{-2}^2 K_t(x,y) \frac{1}{\kappa_2(y)} dy = 1.
  \end{equation*}
Thus we can write:
  \begin{equation*}
   \inn{f}{f}_{w} = \frac{1}{2}\int_{-2}^2\int_{-2}^2 K_t(x,y) \frac{f(x)^2 + f(y)^2}{\kappa_2(x)\kappa_2(y)} dx dy.
  \end{equation*}
Now for any fixed $t > 0$, Lemma \ref{lem:KKernal} tells us that $K_t(x, y)$ is bounded. As a result, Cauchy-Schwarz tells us that $\inn{f}{e^{-tK}f}_w < \infty$ for $f\in L^2_w([-2, 2])$. Hence the following is well-defined for $t > 0$:
  \begin{equation} \label{eq:KtDividet}
   I_t(f) := \frac{1}{t}(\inn{f}{f}_w - \inn{f}{e^{-tK}f}) = \frac{1}{2}\int_{-2}^2\int_{-2}^2 \frac{K_t(x,y)}{t} \frac{(f(x) - f(y))^2}{\kappa_2(x)\kappa_2(y)} dx dy.
  \end{equation}
From the orthogonality of $P_n$ we also have the equality, 
  \begin{equation*}2\int_{-2}^2 \frac{f(x)}{\kappa_2(y)} dy + 4 \sum_{n=1}^\infty \int_{-2}^2 \frac{f(x)P_n(x)P_n(y)}{\kappa_2(y)}dy = f(x).
  \end{equation*}
In particular, this gives:
  \begin{equation*}
  \begin{split}
   \inn{f}{f}_w & = \int_{-2}^2 \int_{-2}^2 \frac{f(x)^2 + f(y)^2}{\kappa_2(x)\kappa_2(y)} dx dy + 2 \sum_{n=1}^\infty \int_{-2}^2 \int_{-2}^2 \frac{f(x)^2 + f(y)^2}{\kappa_2(x)\kappa_2(y)} P_n(x)P_n(y)dx dy \\
    & \ge 2\int_{-2}^2 \int_{-2}^2 \frac{f(x)f(y)}{\kappa_2(x)\kappa_2(y)} dx dy + 4 \sum_{n=1}^\infty \int_{-2}^2 \int_{-2}^2 \frac{f(x)f(y)}{\kappa_2(x)\kappa_2(y)} P_n(x)P_n(y)dx dy.
  \end{split}
  \end{equation*}
At the same time Fubini's theorem tells us that,
  \begin{equation*}
  \begin{split}
   \inn{f}{e^{-tK}f}_w = 2\int_{-2}^2\int_{-2}^2 & \frac{f(x)f(y)}{\kappa_2(x)\kappa_2(y)} dx dy + 4\sum_{n=1}^\infty e^{-\frac{tn}{2}}\int_{-2}^2\int_{-2}^2 \frac{f(x)f(y)}{\kappa_2(x)\kappa_2(y)}P_n(x)P_n(y) dx dy.
   \end{split}
  \end{equation*}
This implies that,
  \begin{equation*}
  \begin{split}
   I_t(f) = \frac{1}{t}(\inn{f}{f}_w - \inn{f}{e^{-tK}f}) & \ge 4 \sum_{n=1}^\infty \frac{1-e^{-\frac{tn}{2}}}{t} \int_{-2}^2\int_{-2}^2 \frac{f(x)f(y)}{\kappa_2(x)\kappa_2(y)}P_n(x)P_n(y) dx dy \\
   & = \frac{1}{8} \sum_{n=1}^\infty \frac{2(1-e^{-\frac{tn}{2}})}{t} a_n^2,
  \end{split}
  \end{equation*}
where $a_n = \int_{-2}^2 \frac{4f(x)P_n(x)}{\kappa_2(x)}dx$ is the Chebyshev coefficient as defined before. In particular, we have:
  \begin{equation*}
   \sup_{t > 0}I_t(f) \ge \sup_{t > 0} \frac{1}{8} \sum_{n=1}^\infty \frac{2(1-e^{-\frac{tn}{2}})}{t} a_n^2 = \frac{1}{8} \sum_{n=1}^\infty n a_n^2.
  \end{equation*}
Then monotone convergence with Lemma \ref{lem:KKernalNegative} tells us that,
  \[\sup_{t > 0}I_t(f) = \lim_{t\to 0}I_t(f) = \inn{f}{Kf}_w = \frac{1}{2}V_{\T{GUE}}(f),\]
where the last equality holds by the definition of $K$ in \eqref{defs:KOperator}. So for $f \in L^2_w([-2, 2])$,
  \begin{equation*}
   4V_{\T{GUE}}(f) \ge \sum_{n=1}^\infty n a_n^2.
  \end{equation*}
Hence the two seminorms are equal.
\end{proof}

\subsection{Proof of Lemma \ref{lem:VLUEApprox}}
This is simply an application of Lemma \ref{lem:seminormsEquiv} with a change of variables.
\begin{proof}
Let $f\in L^\infty([0, \infty))$ be given and suppose \eqref{eq:Assumption1} holds. Performing a change of variable in $\tilde{x} = x-2$ and $\tilde{y} = y-2$:
  \begin{equation*}
    V_{\T{LUE}}[f] = \frac{1}{4\pi^2} \int_{-2}^2 \int_{-2}^2 \Big(\frac{f(\tilde{x} + 2)-f(\tilde{y} + 2)}{\tilde{x} - \tilde{y}}\Big)^2 \frac{4-\tilde{x}\tilde{y}}{\sqrt{4-\tilde{x}^2}\sqrt{4-\tilde{y}^2}} d\tilde{x} d\tilde{y}.
  \end{equation*}
Hence $V_{\T{LUE}}[f] = V_{\T{GUE}}[\tilde{f}]$ where $\tilde{f}(x) = f(x+2)|_{x \in [-2, 2]}$. Let $a_k$ be the Chebyshev coefficients of $\tilde{f}(x)$ and $\tilde{f}^N(x) = \sum_{n=0}^N a_n P_n(x)$. Then Lemma \ref{lem:seminormsEquiv} tells us that
  \begin{equation} \label{eq:ChebyshevApproxVariance}
  \begin{split}
    \sum_{n=N+1}^\infty n a_n^2 = 4V_{\T{GUE}}[\tilde{f}-\tilde{f}^N].
  \end{split}
  \end{equation}
Hence $\tilde{f}^N$ can be made such that is approximates $\tilde{f}$ arbitrarily well in the $V_{\T{GUE}}$ seminorm. So given $\nu > 0$, we can find sufficiently large $N$ such that $V_{\T{GUE}}[\tilde{f}-\tilde{f}^N] < \nu$. Define $f^N(x) = \tilde{f}^N(x-2)$ for $x \in [-2, 2 + \epsilon]$ for some $\epsilon > 0$ and extend it to $[0,\infty)$ smoothly with bounded derivative. Then,
  \[V_{\T{LUE}}[f - f^N] = V_{\T{GUE}}[\tilde{f} - \tilde{f}^N] < \nu.\]
\end{proof}
\newpage
\begingroup
\setlength\bibitemsep{8pt}
\printbibliography
\endgroup

\end{document}